\documentclass{amsart}

\usepackage{amssymb}
\usepackage[dvips]{epsfig}
\usepackage{graphicx}
\usepackage{color}
\usepackage{amsmath}
\usepackage{amsthm}
\usepackage{amssymb}
\usepackage{amsfonts}
\usepackage[applemac]{inputenc}
\usepackage{mathrsfs}

\usepackage{pdfsync}

\newcommand{\dsp}{\displaystyle}

\newcommand{\R}{{\mathbb R}}
\newcommand{\N}{{\mathbb N}}

\newcommand{\eps}{\varepsilon}

\newcommand{\boE}{\mathcal{E}}

\newcommand{\re}{\mathrm{Re}}
\newcommand{\im}{\mathrm{Im}}

\theoremstyle{plain}
\newtheorem{theorem}{Theorem}[section]
\newtheorem{lemma}[theorem]{Lemma}
\newtheorem{corollary}[theorem]{Corollary}
\newtheorem{proposition}[theorem]{Proposition}

\theoremstyle{definition}
\newtheorem{definition}[theorem]{Definition}

\newtheorem{remark}[theorem]{Remark}
\newtheorem*{remark*}{Remark}

\numberwithin{equation}{section}
\newtheorem*{merci}{Acknowledgements}

\theoremstyle{plain}

\numberwithin{equation}{section}

\begin{document}

\title[Dispersive blow up for NLS]{Dispersive blow up for nonlinear Schr\"odinger equations revisited}

\author{J. L. Bona}
\address{Department of Mathematics, Statistics and Computer Science\\
The University of Illinois at Chicago\\
851 S. Morgan Street MC249\\
Chicago, Illinois 60607-7045}
\email{bona@math.uic.edu}

\author{G. Ponce}
\address{Department of Mathematics, \\
University of California\\
Santa Barbara, California 93106}
\email{ponce@math.ucsb.edu}

\author{J.-C.  Saut}
\address{Laboratoire de Math\' ematiques, UMR 8628\\
Universit\' e Paris-Sud et CNRS\\ 91405 Orsay, France}
\email{jean-claude.saut@math.u-psud.fr}

\author{C. Sparber}
\address{Department of Mathematics, Statistics and Computer Science\\
The University of Illinois at Chicago\\
851 S. Morgan Street MC249\\
Chicago, Ilinois 60607-7045}
\email{sparber@math.uic.edu}

\begin{abstract}
The possibility of finite-time, dispersive blow up for nonlinear equations of Schr\"odinger type
is revisited.   
This mathematical phenomena is one of the conceivable explanations for oceanic 
and optical rogue waves. 
In dimension one, the fact that dispersive blow up does occur 
for  nonlinear Schr\"odinger equations already appears in \cite{BS2}.
In the present work, the existing results are extended in several ways.  In one direction, the theory is
broadened to include  the Davey-Stewartson and Gross-Pitaevskii equations.  In another, 
 dispersive blow up is shown to obtain for nonlinear Schr\"odinger equations  in spatial dimensions larger than one and for 
 more general power-law nonlinearities.  As a by-product of our analysis,  a sharp global smoothing estimate for the  integral term appearing in Duhamel's formula is obtained.

\vspace{.5cm}
\noindent {\scshape R\' esum\' e}.  Nous revisitons la possibilit\'e de la formation de singularit\' es
dispersives (dispersive blow-up) pour des solutions d'\' equations de
Schr\"{o}dinger non lin\' eaires. Ce ph\' enom\`ene math\' ematique pourrait \^{e}tre une
explication pour l'apparition des ``vagues sc\' el\' erates" (rogue waves) en
oc\' eanographie et optique non lin\' eaire.  L'\'emergence de  singularit\' es
dispersives pour des \' equations de Schr\"{o}dinger non lin\' eaires en dimension
spatiale un a \' et\' e  prouv\' ee dans \cite{BS2}. Ces r\' esultats sont \' etendus ici dans
plusieurs directions. D'une part, la th\' eorie est \' etendue \`a des \' equations de
Schr\"{o}dinger en dimension spatiale quelconque, avec des non-lin\' earit\' es de
type puissance g\' en\' erales. D'autre part, nous traitons \' egalement le cas des
syst\`emes de Davey-Stewartson et de l'\' equation de Gross-Pitaevskii. 
Un sous-produit de notre analyse est un
effet de lissage global pr\' ecis pour le terme int\' egral de la repr\' esentation
de Duhamel.
\end{abstract}

\date{\today}

\subjclass[2010]{35B44, 35Q55}
\keywords{Nonlinear Schr\"odinger equation, dispersion, finite time blow up, rogue waves, global smoothing estimates}

\thanks{We thank F. Linares for a careful reading of the manuscript and Remi Carles for 
helpful conversation and suggestions.  Part of this work was 
done while J. B. was a visiting professor at the Universit\'e Bordeaux and at the 
Universit\'e Paris Nord.
G.~P. and C.~S. acknowledge support of the NSF through grants DMS-1101499 and  DMS-1161580, respectively. Additional support was provided 
through the NSF research network Ki-Net. J.-C. S. acknowledges partial support from the project GEODISP of the ANR}

\maketitle

\section{Introduction}

This paper continues the theory of dispersive blow up which was 
initiated and developed  in \cite{BS1} and \cite{BS2}.  The present contribution is especially relevant to nonlinear 
Schr\"odinger-type equations, and includes theory for the Davey-Stewartson and the Gross-Pitaevskii equations. The work on 
 Schr\"odinger equations substantially extends the results already available in \cite{BS2}.  

Dispersive blow up of wave equations is a phenomenon of focusing of 
smooth initial disturbances with finite-mass (or, finite-energy, depending on the physical context) that 
relies upon the dispersion relation guaranteeing that, in the
linear regime, different
wavelengths propagate at different speeds.   This is especially the 
case for models wherein the linear dispersion is unbounded, so that energy can
be moved around at arbitrarily high speeds, but even bounded dispersion
can exhibit this type of singularity formation.  

To be more concrete, consider the Cauchy problem 
for the linear (free) Schr\"odinger equation
\begin{equation}\label{eq:linSch}
i \partial_t u + \Delta u = 0, \quad u\big|_{t=0} = u_0(x),
\end{equation}
where $x\in \R^n$ for some $n\in \N$.  For $u_0\in L^2(\R^n)$, elementary 
Fourier analysis shows the solution to this initial-value problem is
\begin{equation}\label{eq:formula}
u(x,t) = e^{i t \Delta} u_0(x):= \frac{1}{(2\pi)^n} \int_{\R^n} e^{-i t |\xi|^2} \widehat u_0(\xi) e^{i \xi\cdot x} d\xi.
\end{equation}
Here, $\widehat u_0$ denotes the Fourier transformed initial data, {\it viz}.  
\[
\mathcal Fu_0(\xi) \equiv\widehat u_0(\xi)= \int_{\R^n} u_0(x) e^{-i \xi \cdot x} \, dx.
\]
The corresponding inverse Fourier transform will be denoted by $\mathcal F^{-1}$. From \eqref{eq:formula}, it is immediately inferred that for any $s \in \R$, solutions lie in $C(\R;H^s)$ 
whenever $u_0$ lies in the $L^2$-based Sobolev space $H^s$.  
Moreover,  the evolution  preserves all these Sobolev-space   
norms, which is to say 
\[\| u(\cdot, t) \|_{H^s(\R^n)} = \| u_0 \|_{H^s(R^n)} \] 
for $t\in \R$.   In certain applications of this model, the case $s=0$ in the last formula
 corresponds to 
conservation of total mass in the underlying physical system.

However, in Theorem 2.1 of \cite{BS2}, it was shown that for any given  point $(x_*, t_*)\in  \R^{n}\times \R_+$, 
there exists initial data  $u_0\in C^\infty(\R^n)\cap L^2(\R^n)\cap L^\infty(\R^n)$ 
such that the solution $u(x,t)$ of the corresponding initial-value problem \eqref{eq:linSch} 
for the free Schr\"odinger equation  is continuous on $\R^n\times \R_+ \setminus \{(x_*, t_*)\}$, but
\[
\lim _{(x,t)\in \R^{n}\times \R_+\to (x_*, t_*)} | u(x,t)| = + \infty.
\] 
This fact is referred to as (finite-time) dispersive blow up and will 
sometimes be abbreviated  DBU in the following. 
The analogous phenomena also appears in other linear dispersive equations, such 
as the linear  Korteweg-de Vries equation \cite{BBM} and the  free surface 
water waves system 
linearized around the rest  state \cite{BS2}.

At first sight, one would expect that nonlinear terms would destroy 
dispersive blow up.  What is a little surprising is that even the 
inclusion of physically relevant nonlinearities in various models 
of wave propagation does not prevent dispersive blow up.  Indeed, 
theory shows in some important cases that initial data 
leading  to this focusing singularity under the linear evolution continues to 
blow up in exactly the same way when nonlinear terms are 
included.  
In \cite{BS1}, this was shown to be 
true for the Korteweg-de Vries equation, a model for shallow water waves and other 
simple wave phenomena.  This result and
analogous dispersive blow up theory in \cite{BS2} for  
solutions of the one-dimensional nonlinear Schr\"odinger equations,
\begin{equation}
\label{**}
i \partial_t u + \partial^2_{x} u \pm |u|^pu=0, \quad u\big|_{t=0} = u_0(x),
\end{equation}
where $x\in \R$ and $p \in (0,3)$,
 lead to the 
speculation that such focusing might be one road to the formation of rogue waves 
in shallow and deep water and in nonlinear optics   
(see \cite{D, DGE, KPS, SRKJ}).  

The analysis put forward in \cite{BS1} and \cite{BS2} revolves around providing bounds on
the nonlinear terms in a Duhamel representation of the 
evolution. Because the phenomenon is due to the linear terms in the equation,
data of arbitrarily small size will still exhibit dispersive blow up, and
indeed it can be organized to happen arbitrarily quickly.   This 
emphasizes the linear aspect of these singularities and differentiates
it from the  blow up that occurs for some of the same
models when the nonlinear term is focusing and sufficiently strong (see \cite{SuSu} for a general overview
 of this aspect of Schr\"odinger equations). Moreover, even though the theory begins by showing that there are specific initial data that lead to 
dispersive blow up, the result is in fact self-improving.  Dispersive blow up
 continues to hold if this special initial data is subjected to a smooth perturbation. 
 The theory further implies that there is $C^\infty$ initial data with compact 
support which can be taken as small as we like that will, in finite time, become large 
in a neighborhood of a prescribed spatial point (see Remark 3.5 for more details). 
Dispersive blow up thereby also serves to demonstrate ill-posedness of the considered models in $L^\infty$--spaces.

The aim of the present work is to generalize the results mentioned above in several respects. 
Most importantly, the dispersive blow up that in \cite{BS2} was obtained for \eqref{**}  will be shown to
hold true of nonlinear Schr\"odinger 
equations in {\it all dimensions} $n \geq 1$ and for 
the whole range of nonlinearities $p\geq \lfloor\frac{n}{2}\rfloor$, with or without a (possibly unbounded) real-valued
potential.  Here and in the following, for $\mu \in \R$, the quantity $ \lfloor\mu \rfloor$ is
 the greatest integer less than or equal
to $\mu$.  Higher-order Schr\"odinger equations are also countenanced.  Our theory relies 
especially on the results of Cazenave and Weissler established in \cite{CW}. 
In addition to  Schr\"odinger equations, dispersive 
 blow up is proved for  Gross-Pitaevskii equations 
 with non-trivial boundary conditions at infinity and for the Davey-Stewartson systems.

As a by-product of our analysis, a sharp global smoothing effect is obtained 
for the nonlinear integral term in 
the  equation derived from \eqref{**} by use of Duhamel's formula.

The paper proceeds as follows:   Section 2 is concerned with some preliminaries 
which are mostly linear in nature.  Dispersive blow up for nonlinear Schr\"odinger 
equations is tackled in Section 3. Section 4 deals with the sharp 
global smoothing property mentioned earlier.  This latter theory, in addition to being 
of interest in itself, is used to complete the analysis in Section 3.   
Dispersive blow up for the Davey-Stewartson 
systems then follows more or less as a corollary to the results in Sections 2 and 3.
  The Gross-Pitaevskii equation takes center stage in 
Section 5 whilst higher-order Schr\"odinger equations are studied in Section 6.


\section{Mathematical preliminaries} \label{sec:prelim}

In this section, a review of the basic idea behind dispersive blow up 
is provided in the context of nonlinear
Schr\"odinger equations.  Parts of the currently available
 theory for the linear Schr\"odinger group 
are also recalled in preparation for the analysis in Section 3.   


\subsection{Dispersive blow up in linear Schr\"odinger equations} \label{sec:lin}

To understand the appearance of dispersive blow up in the solution of \eqref{eq:linSch},
start by explicitly computing the inverse Fourier transformation in \eqref{eq:formula} to
see that the free Schr\"odinger group admits the 
representation 
\begin{equation}\label{eq:group}
u(x,t)  = \frac{1}{(4i \pi t)^{n/2}} \int_{\R^n} e^{i\frac{|x-y|^2}{4t}} u_0(y)\, dy,\quad \text{for $t\not =0.$}
\end{equation}
This representation formula is the starting point of the following lemma.
\begin{lemma}\label{lem:IC}
Let $\alpha \in \R$, $q\in \R^n$ and 
\begin{equation*} 
u_0(x) = \frac{e^{- i  \alpha |x - q|^2}}{(1+|x|^2)^m}\ ,\quad \text{with $\frac{n}{4} < m \le \frac{n}{2}$.}
\end{equation*}
Then, $u_0 \in C^\infty(\R^n)\cap L^2(\R^n)\cap L^\infty(\R^n)$ and the associated global in-time solution 
$u \in C(\R; L^2(\R^n))$ of \eqref{eq:linSch} has the following properties.
\begin{enumerate}
\item At the point $(x_*,t_*)=(q,\frac{1}{4\alpha})$, the solution $u$ in \eqref{eq:group} blows up, 
which is to say, 
\[
\lim _{(x,t)\in \R^{n+1}\to (x_*,t_*)} | u(x,t)| = + \infty,
\]
\item it is a continuous function of $(x,t)$ on  $\, \R^n\times \R \setminus \{t_*\}$ and
\item  $u(x,t_0)$ is a continuous function of $x \in \R^n \setminus \{x_*\}$.
\end{enumerate} 
\end{lemma}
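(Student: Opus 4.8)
The plan is to substitute the data into the representation \eqref{eq:group}, to complete the square in the integration variable, and thereby to reduce everything to the behaviour of a single scalar oscillatory integral. The membership claims are immediate: the denominator $(1+\abs{x}^2)^m$ is smooth and bounded below by $1$ and the phase is smooth, so $u_0\in C^\infty(\R^n)$; moreover $\abs{u_0(x)}=(1+\abs{x}^2)^{-m}\le 1$, so $u_0\in L^\infty(\R^n)$, while $\Abs{u_0}_{L^2}^2=\int_{\R^n}(1+\abs{x}^2)^{-2m}\,dx<\infty$ precisely because $4m>n$, i.e. $m>n/4$. Since $u_0\in L^2(\R^n)$ and $e^{it\Delta}$ is a unitary group on $L^2$, the solution lies in $C(\R;L^2(\R^n))$.

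For the pointwise statements I would insert $u_0$ into \eqref{eq:group}. The phase $\frac{\abs{x-y}^2}{4t}-\alpha\abs{y-q}^2$ is quadratic in $y$ with leading coefficient $\beta(t):=\frac1{4t}-\alpha$, which vanishes exactly at $t=t_*=\frac1{4\alpha}$. Completing the square gives
\[
u(x,t)=\frac{e^{ic(x,t)}}{(4i\pi t)^{n/2}}\,I\big(\beta(t),\mathbf b(x,t)\big),\qquad I(\beta,\mathbf b):=\int_{\R^n}e^{i\beta\abs{y}^2+i\mathbf b\cdot y}\,\frac{dy}{(1+\abs{y}^2)^m},
\]
with $\mathbf b(x,t)=-\frac{x}{2t}+2\alpha q$ and $c(x,t)$ real; at $t=t_*$ one has $\beta=0$ and $\mathbf b=-2\alpha(x-q)$, so $I(0,\mathbf b)=\widehat g(-\mathbf b)$ with $g=(1+\abs{\cdot}^2)^{-m}$, and $\mathbf b\to 0$ exactly as $x\to q=x_*$. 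To analyse $I$ I would use the subordination identity $(1+\abs{y}^2)^{-m}=\Gamma(m)^{-1}\int_0^\infty s^{m-1}e^{-s(1+\abs{y}^2)}\,ds$ together with the Gaussian integral to obtain, after Fubini (justified by absolute convergence when $\beta\neq 0$ and by the classical Bessel-potential formula when $\beta=0$),
\[
I(\beta,\mathbf b)=\frac{\pi^{n/2}}{\Gamma(m)}\int_0^\infty s^{m-1}e^{-s}\,(s-i\beta)^{-n/2}\,e^{-\abs{\mathbf b}^2/(4(s-i\beta))}\,ds.
\]

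The continuity statements follow from this formula. For $t\neq t_*$ one has $\beta\neq 0$; then $\abs{(s-i\beta)^{-n/2}}=(s^2+\beta^2)^{-n/4}\le\abs{\beta}^{-n/2}$ and $\abs{e^{-\abs{\mathbf b}^2/(4(s-i\beta))}}\le 1$, so the integrand is dominated, uniformly for $(x,t)$ in compact subsets of $\{t\neq 0,t_*\}$, by an integrable function of $s$ (integrability near $s=0$ uses $m>0$, and $e^{-s}$ controls $s=\infty$). Dominated convergence then yields joint continuity of $(\beta,\mathbf b)\mapsto I$, hence of $u$, on $\{t\neq 0,t_*\}$, while continuity across $t=0$ is immediate from $u(\cdot,0)=u_0\in C^\infty$ and strong continuity of the flow; this gives (2). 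At $t=t_*$, $u(\cdot,t_*)=(4i\pi t_*)^{-n/2}e^{ic}\,\widehat g(-2\alpha(\cdot-q))$, and $\widehat g$ is smooth away from the origin and singular only at $0$, which corresponds to $x=x_*$; this gives (3).

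The heart of the matter, and the step I expect to be the main obstacle, is to show that $\abs{I(\beta,\mathbf b)}\to\infty$ as $(\beta,\mathbf b)\to(0,0)$ along an \emph{arbitrary} path, not merely along $\beta=0$. Here I would set $\lambda:=\max(\abs{\beta},\abs{\mathbf b}^2)$ and rescale $s=\lambda\sigma$, which produces
\[
I(\beta,\mathbf b)=\frac{\pi^{n/2}}{\Gamma(m)}\,\lambda^{\,m-n/2}\int_0^\infty\sigma^{m-1}e^{-\lambda\sigma}\,(\sigma-i\beta/\lambda)^{-n/2}\,e^{-(\abs{\mathbf b}^2/\lambda)/(4(\sigma-i\beta/\lambda))}\,d\sigma.
\]
The parameters $a:=\beta/\lambda\in[-1,1]$ and $\kappa:=\abs{\mathbf b}^2/\lambda\in[0,1]$ satisfy $\max(\abs{a},\kappa)=1$, so they range over a fixed compact set, and as $\lambda\to 0$ the factor $e^{-\lambda\sigma}\to 1$, so the integral converges to $\Phi(a,\kappa):=\int_0^\infty\sigma^{m-1}(\sigma-ia)^{-n/2}e^{-\kappa/(4(\sigma-ia))}\,d\sigma$, which for $m<n/2$ is absolutely convergent ($m>0$ controls $\sigma=0$, $m<n/2$ controls $\sigma=\infty$). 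Thus $\abs{I}\sim C\lambda^{m-n/2}\abs{\Phi(a,\kappa)}$ with $m-n/2<0$, and the whole claim reduces to showing that $\Phi$ is bounded away from $0$ uniformly on the compact parameter set. This uniform non-degeneracy is the delicate point, since for large $n$ the factor $(\sigma-ia)^{-n/2}$ carries a large phase and one must preclude cancellation; the cleanest route is to treat the two regimes $\abs{a}=1$ (where $\kappa\in[0,1]$ is a bounded perturbation of the classical integral computing the Fourier transform of a Bessel potential) and $\kappa=1,\ \abs{a}\le1$ (where the Gaussian factor suppresses small $\sigma$) separately, evaluating each limiting integral to verify $\Phi\neq 0$ and invoking continuity of $\Phi$ for a uniform lower bound. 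Granting this, $\abs{u(x,t)}=(4\pi t)^{-n/2}\abs{I}\gtrsim\lambda^{m-n/2}\to\infty$ as $(x,t)\to(x_*,t_*)$, which is (1). For the borderline case $m=n/2$ the exponent vanishes, $\Phi$ diverges logarithmically at $\sigma=\infty$, and the same scaling instead yields $\abs{I}\sim C\log(1/\lambda)\to\infty$, so the conclusion persists.
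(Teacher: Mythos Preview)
Your approach is considerably more elaborate than the paper's, which for part (i) simply evaluates \eqref{eq:group} at $t = t_* = 1/(4\alpha)$: the phase $\frac{|x-y|^2}{4t_*} - \alpha|y-q|^2$ collapses to a term linear in $y$, giving $u(x,t_*)$ as a constant times the Fourier transform of $(1+|y|^2)^{-m}$ at the point $-2\alpha(x-q)$; at $x=q$ this is $\int_{\R^n} (1+|y|^2)^{-m}\,dy = +\infty$ since $m \le n/2$. Parts (ii) and (iii) are then deferred to \cite{BS2}, with the remark that $(1+|\cdot|^2)^{-m}$ has Fourier transform related to the modified Bessel functions $K_\nu$. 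Your subordination formula is a legitimate and more self-contained route to (ii) and (iii); the dominated-convergence arguments there are correct (with the small caveat that pointwise continuity across $t=0$ does not follow from $L^2$ strong continuity alone).

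For (i), however, you set yourself a harder task than the paper does---proving blow-up along \emph{every} path to $(x_*,t_*)$---and the crucial step you yourself flag, uniform non-vanishing of $\Phi(a,\kappa)$ on the set $\{\max(|a|,\kappa)=1\}$, is genuinely nontrivial and not resolved by your sketch. Checking $\Phi\neq 0$ at a handful of boundary configurations and then invoking continuity does not exclude zeros elsewhere on that compact set: $\Phi$ is a complex-valued oscillatory integral (the factor $(\sigma-ia)^{-n/2}$ carries unbounded argument as $\sigma$ varies), and cancellation must be ruled out at every parameter value. A complete proof would need an explicit evaluation of $\Phi$, a deformation or winding-number argument, or a direct lower bound on $|I(\beta,\mathbf b)|$ that bypasses $\Phi$ altogether. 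As written, the full limit in (i) remains open in your argument; the paper, by contrast, only verifies divergence of $u$ at the single point $(x_*,t_*)$ and leaves the finer behaviour to \cite{BS2}.
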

\begin{proof}   First note that $u_0 \in C^\infty(\R^n)\cap L^2(\R^n)\cap L^\infty(\R^n)$, that 
$u \in C(\R; L^2(\R^n))$ and that the $L^2$--norm of $u$ is constant in view of mass conservation. On the other hand, 
 evaluating \eqref{eq:group} at $t=\frac{1}{4\alpha}$ for this particular $u_0$   gives
\[
u\left(x, \frac{1}{4\alpha}\right)=\left( \frac{\alpha}{i\pi} \right)^{n/2} e^{i \alpha (|x|^2-|q|^2)} \int_{\R^n} e^{-2i \alpha y\cdot (x-q)} \frac{dy}{(1+|y|^2)^m}.
\]
Thus at $x=q$, it transpires that 
\[
\left| u\left(q, \frac{1}{4\alpha}\right) \right | =  \left( \frac{\alpha}{\pi} \right)^{n/2}   \int_{\R^n}  \frac{dy}{(1+|y|^2)^m}  = +\infty
\]
provided $m \le \frac{n}{2}$.  Assertions (ii) and (iii) can then be proved by the same arguments as in the proof of \cite[Theorem 2.1]{BS2}. 
In this endeavor, it is useful to note that $(1+x^2)^{-m}$ is closely related to the inverse Fourier transform of the modified Bessel functions $K_\nu(|x|)$, where 
$\nu = \frac{n}{2}-m$. 
\end{proof}

In other words,  for any given $q\in\R^n, \alpha \in \R$, we have
constructed an explicit family of bounded smooth initial data (with finite mass) for which the solution of 
the free Schr\"odinger equation \eqref{eq:linSch} exhibits dispersive blow up at  the point $(x_*, t_*) = (q, \frac{1}{4\alpha})$ in space and time.   This result can be immediately generalized in 
various ways.  The  following  sequence of remarks indicates some of them.

\begin{remark}\label{amp}
The same argument shows that any initial data of the form
\[
u_0(x) = e^{- i  \alpha |x - q|^2} a(x),
\]
with an amplitude $a\in C^\infty(\R^n)\cap L^2(\R^n)\cap L^\infty(\R^n)$ but $a\not \in L^1(\R^n)$ 
will exhibit   dispersive blow up. 
Using the superposition principle,  one can construct initial data which yield 
dispersive blow up at any
countably many isolated points in space-time $\R^n \times \R$. In addition, multiplying $u_0$ by $\delta$
with  $0<\delta \ll1$, allows for initial 
data which are arbitrarily small, but which nevertheless blow up at $(x_*, t_*) $. 
By  a suitable spatial truncation of $u_0$, one can also construct small, 
smooth, bounded initial data with finite mass,  all of whose derivatives also have finite 
$L^2$--norm,  such that the corresponding solution $u$ remains smooth 
but achieves arbitrarily large values at a given  point in space-time (see \cite{BS2} for more details). 
\end{remark}

\begin{remark}\label{frac}
It was also proven in \cite{BS2} that dispersive blow up  holds true for the  class 
\begin{equation*}\label{fracS}
  \dsp i\partial_tu+(-\Delta)^{\frac{a}{2}}u=0, \quad 0<a<1,
\end{equation*}
of fractional 
Schr\" odinger equations in $\R^n\times \R^+$.
However, observe that, in contrast to the classical Schr\"{o}dinger equation, 
the phase velocity for \eqref{fracS} becomes arbitrarily large in the long wave limit but is bounded (and actually tends to zero) in the short wave limit.
\end{remark}

A natural extension of DBU for linear  Schr\"odinger equations is the
initial-value problem  in the presence of 
external potentials $V(x,t)\in \R$.  While we are not 
going to deal here with this issue in full generality, we note that an 
immediate consequence of Remark \ref{amp} is the appearance of dispersive blow up for Schr\"odinger equations with a Stark potential, {\it i.e.}
\begin{equation}\label{eq:stark}
i \partial_t v + \Delta v - (E\cdot x)v = 0, \quad\,\, v\big|_{t=0} = v_0(x),
\end{equation}
where $E\in \R^n$. This equation models electromagnetic wave propagation in a constant electric field. Solutions of \eqref{eq:stark} 
are connected to solutions of the free Schr\"odinger equation through the Avron-Herbst formula \cite{AH}. Indeed, it is easy to check that if $v$ solves 
\eqref{eq:stark}, then 
\[
u(x,t) = v\left(x +  t^2  E, t\right) e^{-i tE\cdot x -i  \frac{t^3}{3}|E|^2},
\]
solves the linear problem \eqref{eq:linSch} with the same initial data. Thus, if the  $u$ 
that solves the free Schr\"odinger equation 
exhibits dispersive blow up at a given $(x_*,t_*)$, then so does the solution $v$ of \eqref{eq:stark} at the point $(x_*+2t_*^2E,t_*)$.

An analogous result is also true in the case of linear Schr\"odinger equations with 
isotropic quadratic potentials, {\it viz.}
\begin{equation}\label{eq:linharm}
i\partial_tu + \Delta u \pm \omega^2 |x|^2 u=0, \quad u\big|_{t=0} = u_0(x),
\end{equation}
where $\omega \in \R$. The two signs correspond, respectively,
 to attractive ($-$) and repulsive ($+$) potentials. 
Following \cite{Ca}, we find that if $v$ solves \eqref{eq:linharm} with attractive harmonic potential, then 
\[
u(x,t) = \frac{1}{(1+(2\omega t)^2)^{n/4}} \, v \left(\frac{\arctan (2\omega t) }{\omega}, \frac{x}{\sqrt{1+ 
(2\omega t) ^2}}\right) e^{ i  \frac{ 2\omega^2  x^2 t}{(1+(2\omega t)^2}},
\]
solves \eqref{eq:linSch}. Thus, dispersive blow up for $u$ again implies dispersive blow up for $v$, although at a 
shifted point in  space-time.
A similar formula is available in the repulsive case (see \cite{Ca2}). 

\begin{remark}  Alternatively, one can prove dispersive blow up 
for  linear Schr\"odinger equations with quadratic potentials 
using (generalized) Mehler formulas for the associated Schr\"odinger group.
The  Mehler formulas for solutions are 
\[
u(x,t)=e^{-in\frac{\pi}{4} \text{sgn}\; t} \left|\frac{\omega}{2\pi \sin \omega t}\right|^{\frac{n}{2}}\int_{\R^n} e^{\frac{i\omega}{\sin \omega t}(\frac{|x|^2+|y|^2}{2} \cos \omega t-x\cdot y)}\, u_0(y)\, dy,
\]
 for \eqref{eq:linharm} in the attractive situation, while in the repulsive case, one has
\[
u(x,t)=e^{-in\frac{\pi}{4} \text{sgn}\; t} \left|\frac{\omega}{2\pi \sinh \omega t}\right|^{\frac{n}{2}}\int_{\R^n} e^{\frac{i\omega}{\sinh \omega t}(\frac{|x|^2+|y|^2}{2} \cosh \omega t-x\cdot y)}\, u_0(y) \, dy
\]
(see  \cite{Ca2} again).  In the attractive case, Mehler's formula is valid for $|t|<\frac{\pi}{2\omega}$, while in the repulsive case it makes sense for all $t>0$. 
Generalizations of these formulas are available and allow one to infer that dispersive blow up
can occur in the presence  of  anisotropic 
quadratic potentials (see \cite{Ca2}).
\end{remark}

We close this subsection by noting, that at least for $n=1$, it is easy to show that dispersive blow up is stable under the influence of a 
rather general class of external (real-valued) potentials $V\in C(\R; L^2(\R))$. To this end, consider the linear
Schr\"odinger equation
\begin{equation}\label{eq:linVSch}
i \partial_t u + \partial^2_{x} u - V(x,t) u=0, \quad u\big|_{t=0} = u_0(x),
\end{equation}
which which can rewritten, using Duhamel's formula, as
\begin{equation}\label{eq:Vduhamel}
u(x,t) = e^{i t \partial_x^2} u_0(x)- i \int_0^t e^{i (t-s) \partial _x^2} V(s,x) u(x,s) \, ds =: e^{i t \Delta} u_0(x) - i I_{V} (x,t) .
\end{equation}
In view of \eqref{eq:group}, we have  formally that
\begin{equation}\label{eq:I}
I_V(x,t)=\frac{1}{(4i \pi t)^{1/2}} \int_0^t\int_{\R^n}\frac{1}{(t-s)^{\frac{1}{2}}}\exp \left(i\frac{|x-y|^2}{4(t-s)}\right) V(s,y)u(y,s) \, dy \, ds.
\end{equation}
Now, assume that the first term on the right-hand side of \eqref{eq:Vduhamel} exhibits dispersive blow up at some $(x_*, t_*)$.   
Then, it suffices to show that $I_V(x,t)$ is continuous for $(x,t)\in \R^n\times [0,T]$, for some $T>t_*$ to conclude that the solution of the \eqref{eq:linVSch} 
exhibits dispersive blow up at the same point $(x_*, t_*)$.  If it is established that $I_V(x,t)$ 
is locally bounded as a function of $x$ and $t$ in the range $\R^n\times [0,T]$ for some
$T > t_*$, then Lebesgue's dominated convergence 
theorem will imply the desired continuity.  
To show local boundedness,  first apply the Cauchy-Schwartz inequality to find
\begin{equation}\label{bound}
| I_V(x,t) | \leq \frac{1}{(4\pi |t|)^{1/2}} \int_0^t \frac{1}{|t-s|^{\frac{1}{2}}} \, \| V(\cdot, s) \|_{L^2} \| \| u_0 \|_{L^2} \, ds,
\end{equation}
where conservation of mass, $\| u(\cdot, t)\|_{L^2} = \|u_0 \|_{L^2}$, has been used. 
Due to our assumption on $V$ 
and the fact that $t\mapsto t^{-1/2}$ is locally integrable, the right-hand side of \eqref{bound} is finite and we are done.
\vspace{.1cm}

A similar argument will be used preently in the study of DBU for the
 nonlinear Schr\"odinger equations in dimension $n=1$. 
It is clear, however, that in general dimensions $n>1$ a more refined analysis is needed.


\subsection{Smoothing properties of the free Schr\"odinger group}\label{sec:smooth}

In this subsection, some  technical results on the smoothing properties of the 
free Schr\"odinger group $S(t) = e^{i t \Delta}$ are reviewed. They will find use
in Section 4.   

First, recall the notion of admissible index-pairs.
\begin{definition} \label{def:adm} The pair $(p,q)$ is called {\it admissible} if
\begin{equation*}
\frac{2}{q}=\frac{n}{2}-\frac{n}{p}, \quad \text{and} \ 
    \left\lbrace
    \begin{array}{l}
    2\leq p<\frac{2n}{n-2},\ \text{for} \, n\geq 3, \\
    2\leq p<+\infty,\ \text{if} \ n=2,\\
    2\leq p\leq +\infty,\ \text{if} \ n=1.
    \end{array}\right.
\end{equation*}
\end{definition} 

From now on, for any index $r>0$,  its H\"older dual is denoted $r'$, {\it i.e.} $\frac{1}{r}+\frac{1}{r'} = 1$.
The well known Strichartz estimates for the Schr\"odinger group $S(t) =  e^{i t \Delta}$ are recounted 
in the next lemma (see \cite{Caz, LP} for more details).

\begin{lemma} \label{lem:Strich}
If $(p,q)$ is admissible, then the group $\{ e^{i t \Delta} \}_{t\in \R}$ satisfies
\[
\left(\int_{-\infty}^\infty \big\| e^{i t \Delta} f  \big\|^q_{L^p(\R^n)} \, d t\right)^{\frac{1}{q}} \leq C \big\| f \big\|_{L^2(\R^n)} 
\]
and 
\[
 \left(\int_{-\infty}^\infty \Big \| \int_{0}^t e^{i (t-s) \Delta} g(\cdot, s)  \, ds \Big \|^q_{L^p(\R^n)} \, d t\right)^{\frac{1}{q}} \! \leq 
\, C  \left( \int_{-\infty}^\infty \big\| g(\cdot, t)  \big\|^{q'}_{L^{p'}(\R^n)} \, 
d t \right)^{\frac{1}{q'}}\!, 
\]
where $C = C(p,n)$.
\end{lemma}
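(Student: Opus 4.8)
The plan is to establish both inequalities by the classical $TT^*$ method, using as analytic input only the pointwise dispersive decay of the free group together with the Hardy--Littlewood--Sobolev inequality in the time variable. First I would record the dispersive estimate. Reading off the kernel in the representation \eqref{eq:group}, one has immediately
\[
\| e^{it\Delta} f \|_{L^\infty(\R^n)} \le \frac{1}{(4\pi |t|)^{n/2}} \| f \|_{L^1(\R^n)}, \qquad t \neq 0,
\]
while mass conservation gives $\| e^{it\Delta} f \|_{L^2} = \| f \|_{L^2}$. Interpolating these two bounds by Riesz--Thorin yields, for every admissible $p$,
\[
\| e^{it\Delta} f \|_{L^p(\R^n)} \le C\, |t|^{-n(\frac12 - \frac1p)} \| f \|_{L^{p'}(\R^n)},
\]
and the admissibility relation $\frac2q = \frac n2 - \frac np$ is precisely what identifies the decay exponent $n(\frac12-\frac1p)$ with $\frac2q$.

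Next I would set up the duality. Writing $Tf(t,x) = e^{it\Delta}f(x)$, the first (homogeneous) estimate asserts $\| Tf \|_{L^q_t L^p_x} \le C \| f \|_{L^2}$. By the standard $TT^*$ equivalence this is the same as the boundedness of
\[
(TT^* g)(t) = \int_{-\infty}^\infty e^{i(t-s)\Delta} g(\cdot,s)\, ds
\]
from $L^{q'}_t L^{p'}_x$ into $L^q_t L^p_x$. Applying Minkowski's integral inequality and then the dispersive estimate above gives
\[
\| (TT^* g)(t) \|_{L^p_x} \le C \int_{-\infty}^\infty |t-s|^{-2/q} \, \| g(\cdot,s) \|_{L^{p'}_x}\, ds,
\]
a convolution in $t$ against the kernel $|t|^{-2/q}$. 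The one-dimensional Hardy--Littlewood--Sobolev inequality maps $L^{q'}_t$ to $L^q_t$ under exactly the exponent relation $1 - \frac2q = \frac1{q'} - \frac1q$, which holds identically; since away from the excluded endpoint one has $q > 2$, the hypotheses of the inequality are met. This proves the homogeneous estimate and, simultaneously, the inhomogeneous estimate with the full integral $\int_{-\infty}^\infty$ in place of $\int_0^t$.

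Finally, the second inequality in the lemma involves the retarded integral $\int_0^t$, which is not of the self-adjoint $TT^*$ form. To pass from the non-retarded bound just obtained to the truncated operator $g \mapsto \int_0^t e^{i(t-s)\Delta} g(\cdot,s)\, ds$ I would invoke the Christ--Kiselev lemma; its applicability rests on the strict inequality $q' < q$, which is guaranteed because the admissible pairs in Definition \ref{def:adm} stay strictly away from the endpoint $(p,q)=(\frac{2n}{n-2},2)$. The main point requiring care is precisely this last step: the exponent bookkeeping in the $TT^*$ and Hardy--Littlewood--Sobolev reductions is forced by the admissibility condition and is essentially mechanical, whereas the upgrade from the untruncated to the causal integral is genuinely nontrivial and hinges on the endpoint being excluded.
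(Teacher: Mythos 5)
Your argument is correct and is precisely the standard $TT^*$ proof that the paper itself does not reproduce but simply defers to by citing \cite{Caz, LP}. One small simplification: for the retarded integral with the same admissible pair on both sides, the Christ--Kiselev lemma is not actually needed, because the truncated operator obeys the same pointwise bound $\bigl\|\int_0^t e^{i(t-s)\Delta}g(\cdot,s)\,ds\bigr\|_{L^p_x}\le C\int_{-\infty}^{\infty}|t-s|^{-2/q}\|g(\cdot,s)\|_{L^{p'}_x}\,ds$ (the kernel bound is positive, so restricting the domain of integration only decreases the right-hand side), after which Hardy--Littlewood--Sobolev applies directly; Christ--Kiselev becomes essential only when the two sides of the inhomogeneous estimate carry \emph{different} admissible pairs.
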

The estimates stated above can be interpreted as global smoothing properties of the free Schr\"odinger group $S(t)$. 
In addition to that, $S(t)$ is known to also induce local smoothing effects, some of which are collected in the following lemma (for proofs, see \cite[Chapter 4]{LP}).
For $1 \leq j \leq n$, denote the so-called homogenous derivatives of order $s>0$ by
\begin{equation}\label{eq:homD}
\begin{split}
&D^s_{x_j} f(x):= \mathcal F^{-1}  \big(|\xi_j|^s \widehat f(\xi)\big)(x) \quad \text{and, for $n=1$,} \\
&D^s f(x):= \mathcal F^{-1}  \big(|\xi|^s \widehat f(\xi)\big)(x).
 \end{split}
\end{equation}
\begin{lemma} \label{lem:smooth} 
If $n=1$ and $f \in L^2(\R)$, then
\[ 
\sup_{x\in \R} \ \int_\R  \big| D_{x}^{1/2}e^{it \partial^2_x } f (x) \big|^2 \, dt \leq C\|f\|^2_{L^2(\R)}.\]
Let $n\ge 2$. Then, for all $j\in \lbrace 1,\dots ,n\rbrace$ and $f \in L^2(\R^n)$, 
\[
\sup_{x_j\in \R} \ \int_{\R^n}  \big| D_{x_j}^{1/2}e^{it\Delta} f (x)\big|^2 dx_1\dots dx_{j-1}dx_{j+1}\dots dx_n  dt  \leq C\|f\|^2_{L^2(\R^n)}.
\]
\end{lemma}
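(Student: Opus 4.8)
The plan is to prove the one-dimensional estimate first by a direct computation on the Fourier side, and then to obtain the higher-dimensional statement by taking a partial Fourier transform in the variables transverse to $x_j$, thereby reducing it to the one-dimensional case. The whole argument rests on Plancherel's theorem together with the change of variables $\tau = \xi^2$; no oscillatory-integral machinery is needed.

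\textbf{The case $n=1$.} Starting from the Fourier representation
\[
D_x^{1/2} e^{it\partial_x^2} f(x) = \frac{1}{2\pi}\int_\R |\xi|^{1/2} e^{-it\xi^2}\widehat f(\xi) e^{ix\xi}\,d\xi,
\]
I would regard this, for each fixed $x$, as a function of $t$. Splitting the integral into the ranges $\xi>0$ and $\xi<0$ and substituting $\tau=\xi^2$ on each half-line (so that $d\xi = \frac{1}{2}\tau^{-1/2}\,d\tau$) expresses $D_x^{1/2}e^{it\partial_x^2}f(x)$ as the Fourier transform in $\tau$ of the function
\[
h_x(\tau) = \tfrac{1}{2}\tau^{-1/4}\big(\widehat f(\sqrt\tau)\,e^{ix\sqrt\tau} + \widehat f(-\sqrt\tau)\,e^{-ix\sqrt\tau}\big), \qquad \tau>0,
\]
extended by zero for $\tau<0$. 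Applying Plancherel in the $t$ variable and then the elementary bound $|a+b|^2\le 2(|a|^2+|b|^2)$ gives
\[
\int_\R \big|D_x^{1/2}e^{it\partial_x^2}f(x)\big|^2\,dt \le C\int_0^\infty \tau^{-1/2}\big(|\widehat f(\sqrt\tau)|^2 + |\widehat f(-\sqrt\tau)|^2\big)\,d\tau.
\]
The decisive point is that the factors $e^{\pm ix\sqrt\tau}$ have modulus one, so the right-hand side does not depend on $x$. Reversing the substitution $\tau=\xi^2$, the Jacobian $d\tau = 2|\xi|\,d\xi$ cancels the weight $\tau^{-1/2}=|\xi|^{-1}$ exactly, and the right-hand side becomes $C\int_\R |\widehat f(\xi)|^2\,d\xi = C\|f\|_{L^2}^2$. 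Taking the supremum over $x\in\R$ yields the claim. I note that this exact cancellation is precisely why the derivative order must be $1/2$: with $D^s$ in place of $D^{1/2}$ the reversed integral would carry the weight $|\xi|^{2s-1}$, which reduces to $L^2$ control only when $s=\frac12$. The apparent degeneracy of $\tau^{-1/2}$ at the origin causes no trouble, since after the change of variables the integrand is simply $|\widehat f(\xi)|^2$.

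\textbf{The case $n\ge 2$.} Here I would write $x=(x_j,x^\perp)$ with $x^\perp=(x_1,\dots,x_{j-1},x_{j+1},\dots,x_n)\in\R^{n-1}$, and denote by $\xi^\perp$ the variables dual to $x^\perp$. Since $e^{it\Delta}=e^{it\partial_{x_j}^2}e^{it\Delta_{x^\perp}}$ and these operators commute, taking the Fourier transform $\mathcal F_{x^\perp}$ in the transverse variables gives
\[
\mathcal F_{x^\perp}\big(e^{it\Delta}f\big)(x_j,\xi^\perp) = e^{-it|\xi^\perp|^2}\,e^{it\partial_{x_j}^2}\big(\mathcal F_{x^\perp}f\big)(\cdot,\xi^\perp)(x_j).
\]
Because $D_{x_j}^{1/2}$ acts only in $x_j$ it commutes with $\mathcal F_{x^\perp}$, and the scalar factor $e^{-it|\xi^\perp|^2}$ is unimodular; hence Plancherel in $x^\perp$ (for each fixed $x_j$ and $t$) produces
\[
\int_{\R^{n-1}}\!\!\int_\R \big|D_{x_j}^{1/2}e^{it\Delta}f\big|^2\,dx^\perp\,dt = C\int_{\R^{n-1}}\int_\R\big|D_{x_j}^{1/2}e^{it\partial_{x_j}^2}\big(\mathcal F_{x^\perp}f\big)(\cdot,\xi^\perp)(x_j)\big|^2\,dt\,d\xi^\perp.
\]
The one-dimensional estimate now applies to the inner $t$-integral for each fixed $\xi^\perp$, with a bound $C\|\mathcal F_{x^\perp}f(\cdot,\xi^\perp)\|_{L^2_{x_j}}^2$ that is \emph{uniform in $x_j$}. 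Integrating this over $\xi^\perp$ and invoking Plancherel in $x^\perp$ once more returns $C\|f\|_{L^2(\R^n)}^2$, with a constant independent of $x_j$; taking the supremum over $x_j$ finishes the proof.

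\textbf{The main obstacle.} All the substance is concentrated in the one-dimensional computation, and specifically in recognizing that the correct change of variables is $\tau=\xi^2$, which is what converts the $t$-integral into a Plancherel identity and makes the $x$-dependence collapse to a harmless phase. Once that is in hand, the passage to higher dimensions is routine: it only uses the commuting factorization of the free group, the fact that $D_{x_j}^{1/2}$ acts in a single variable, and two applications of Plancherel in $x^\perp$.
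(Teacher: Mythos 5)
Your proof is correct, and it is essentially the argument the paper itself relies on: the lemma is stated without proof and referred to \cite[Chapter 4]{LP}, where exactly this computation appears --- the change of variables $\tau=\xi^2$ plus Plancherel in $t$ for the one-dimensional sharp Kato smoothing estimate, followed by a partial Fourier transform in the transverse variables and the factorization $e^{it\Delta}=e^{it\partial_{x_j}^2}e^{it\Delta_{x^\perp}}$ to lift it to $n\ge 2$. Your observation that the Jacobian cancellation forces the gain to be exactly half a derivative is also the standard explanation of why the estimate is sharp.
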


Helpful inequalities  involving the
Schr\"odinger maximal function
\[
S_T^*f(x):= \sup_{0\leq t\leq T} |e^{it\Delta}f(x)|
\]
are derived in  \cite {RV} and  \cite{V}.  They are reported in the next lemma.
\begin{lemma}\label{lem:B}  The inequality
\begin{equation}\label{max}
\| S_T^*f\|_{L^q(\R^n)} \leq C_T \|f\|_{H^\sigma(\R^n)}
\end{equation}
holds if either
\begin{equation}\label{5.2}
n=1\quad \text{with}\quad\begin{cases}
q>2\quad\text{and}\quad \sigma \geq \max\lbrace \frac{1}{q}, \frac{1}{2}-\frac{1}{q}\rbrace, 
 \\
q=2\quad\text{and}\quad \sigma >\frac{1}{2},
\end{cases}
\end{equation}
or  
\begin{equation}\label{5.2bis}
n>1\quad \text{with}\quad\begin{cases}
q\in (2+\frac{4}{(n+1)},\infty)\quad\text{and}\quad \sigma >n(\frac{1}{2}-\frac{1}{q}),\\
q\in[2,2+\frac{4}{(n+1)}]\quad\text{and}\quad \sigma >\frac{3}{q}-\frac{1}{2}.
\end{cases}
\end{equation}
\end{lemma}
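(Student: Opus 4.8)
The plan is to read \eqref{max} as a family of sharp maximal-function (Carleson-type) estimates and to reduce them, via a Littlewood--Paley decomposition together with parabolic rescaling, to frequency-localized bounds amenable to oscillatory-integral analysis. First I would split $f=\sum_{\lambda} P_\lambda f$ into dyadic pieces with $\widehat{P_\lambda f}$ supported in $|\xi|\sim\lambda$, $\lambda\in 2^{\Z}$, so that by the triangle inequality it suffices to establish $\|S_T^* P_\lambda f\|_{L^q(\R^n)}\lesssim \lambda^{\sigma}\|P_\lambda f\|_{L^2(\R^n)}$ with a power of $\lambda$ matching the $H^\sigma$ weight. The Schr\"odinger scaling $u(x,t)\mapsto u(\lambda x,\lambda^2 t)$ converts a frequency-$\lambda$ datum over the window $[0,T]$ into a frequency-one datum over the rescaled window $[0,\lambda^2 T]$; the essential point is therefore a maximal estimate for frequency-one data over a \emph{long} time interval, and it is precisely the growth of the window with $\lambda^2$ that produces the regularity loss $\sigma$ (were the window of fixed length, frequency-one data would be controlled with no derivative loss).

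For the frequency-localized problem I would combine the dispersive decay $\|e^{it\Delta}P_1 f\|_{L^\infty}\lesssim |t|^{-n/2}\|P_1 f\|_{L^1}$, interpolated against $L^2$-conservation to yield fixed-time $L^q$ bounds, with the $TT^*$/fractional-integration argument of Kenig--Ruiz and Vega: linearizing the supremum as $|e^{i t(x)\Delta}P_1 f(x)|$ for a measurable $t(\cdot)$ and expanding the square produces an oscillatory kernel whose stationary-phase analysis exhibits the decay $|t(x)-t(y)|^{-n/2}$, after which the Hardy--Littlewood--Sobolev inequality closes the estimate in the range of $q$ for which the resulting fractional power is integrable. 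In the favourable range $q\in(2+\tfrac{4}{n+1},\infty)$ this amounts to a Sobolev-in-time embedding fed by the Strichartz bounds of Lemma \ref{lem:Strich} together with Bernstein's inequality, which is why the threshold there is the scaling-critical $\sigma>n(\tfrac12-\tfrac1q)$. In one dimension the explicit form of the kernel and the half-derivative gain of the local smoothing estimate of Lemma \ref{lem:smooth} make the whole range \eqref{5.2} accessible by these relatively classical means, including the self-dual endpoint $\sigma=\tfrac14$ at $q=4$, which is exactly why equality in $\sigma$ is permitted only in the one-dimensional case $q>2$.

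The main obstacle is the complementary range in \eqref{5.2bis}, namely $q\in[2,2+\tfrac{4}{n+1}]$ with $\sigma>\tfrac3q-\tfrac12$, for which the soft energy and Sobolev-in-time arguments are lossy and the curvature of the paraboloid must be genuinely exploited. Here the decay $|t|^{-n/2}$ in the $TT^*$ kernel is too weak to be integrated directly for small $q$, so one must replace the plain stationary-phase bound by a square-function (C\'ordoba-type) or bilinear-restriction refinement of the Strichartz inequality; controlling the coupling between the temporal supremum and this oscillatory gain is the heart of the matter and is where the analyses of \cite{RV} and \cite{V} do their real work. Finally I would reassemble the dyadic blocks: each contributes the sharp power of $\lambda$, and since the Littlewood--Paley square-function inequality is available for $q\ge2$, the sum converges under the stated lower bounds on $\sigma$, the strict inequalities (away from the one-dimensional endpoints) serving to absorb the logarithmic loss incurred in the summation.
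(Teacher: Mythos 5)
The paper does not prove this lemma at all: the estimates \eqref{max}--\eqref{5.2bis} are simply quoted from \cite{RV} and \cite{V}, and the surrounding text makes clear they are being recorded as known results rather than established. Your sketch is therefore not comparable to a proof in the paper; it is an attempt to reconstruct the arguments of the cited references. As a roadmap it is broadly faithful to how such bounds are actually obtained (Littlewood--Paley decomposition, parabolic rescaling to a unit-frequency datum over a window of length $\lambda^2 T$, linearization of the supremum via a measurable $t(\cdot)$ and a $TT^*$ computation, Strichartz plus Sobolev embedding in time for the supercritical range $q>2+\tfrac{4}{n+1}$).

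However, as a proof it has two genuine gaps. First, for the range $q\in[2,2+\tfrac{4}{n+1}]$ with $\sigma>\tfrac{3}{q}-\tfrac12$ --- which is the substance of \cite{RV} and the part of the lemma actually used in the sequel --- you explicitly hand the argument back to \cite{RV} and \cite{V} at the decisive moment, so nothing is proved there beyond the observation that it is hard. Second, two steps that you do describe would not close as written: (i) after linearizing the supremum, the oscillatory kernel must be shown to decay in $|x-y|$ (this is what feeds Hardy--Littlewood--Sobolev or Schur in the spatial variable), whereas you only extract decay in $|t(x)-t(y)|$, and converting one into the other is precisely where the curvature of the phase enters; (ii) in dimension one with $q>2$ the lemma permits \emph{equality} $\sigma=\max\{\tfrac1q,\tfrac12-\tfrac1q\}$, and a dyadic decomposition summed ``using the strict inequality to absorb a logarithm'' cannot reach that endpoint --- the scale-invariant $L^4(\R)$ bound with a quarter derivative must be proved directly, as in \cite{V}, not assembled from frequency-localized pieces with slack. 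If the intent is to match the paper, the honest course is to state the lemma as a citation, as the authors do; if the intent is to supply a proof, the complementary range and the endpoint case still need actual arguments.
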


With these results at hand, attention is turned to establishing
 dispersive blow up for nonlinear Schr\"odinger equations.


\section{Dispersive blow up for nonlinear Schr\"odinger equations }\label{sec:NLS}

In this section, the initial-value problem
\begin{equation}\label{eq:NLS}
i\partial_t u+\Delta u\pm |u|^p u=0,\quad u\big|_{t=0}=u_0(x),
\end {equation}
for the nonlinear Schr\"odinger equation is considered.  Here,  $x\in \R^n$ and $p>0$ is not necessarily an integer. 
Finite-time dispersive blow up was established for $n=1$ and $p\in (0,3)$ in \cite{BS2}. 
Our strategy to improve upon this result relies upon the theory developed in  \cite{CW}, where the Cauchy problem \eqref{eq:NLS} 
was studied for $u_0\in H^s(\R^n)$ for various values of $s$.  


\subsection{Local well-posedness in $H^s$}\label{sec:LWP}

For $1\le r < \infty$ and $s>0$, define
\[
H^{s,r}(\R^n)=\Big\{ f\in L^r(\R^n): \mathcal F^{-1}\big[ (1+|\xi|^2)^{s/2} \widehat f(\xi) 
 \big] 
\in L^r(\R^n) \Big\}.
\] 
These are the  standard Bessel potential spaces. According to \cite{St}, which uses the notation $L^{s,r}$ instead of $H^{s,r}$, these spaces may be characterized in the following manner.
Let $s\in (0,1)$ and $\frac{2n}{(n+2s)}<r<\infty$. Then
$f\in H^{s,r}(\R^n)$ if and only if $f\in L^r(\R^n)$ and 
\begin{equation}\label{eq:frac}
\mathcal D^s f(x)=\left(\int_{\R^d} \frac{|f(x)-f(y)|^2}{|x-y|^{n+2s}} \, dy\right)^{1/2}\in L^r(\R^n).
\end{equation}
The space $H^{s,r}(\R^n) \equiv (I-\Delta)^{-s/2}L^r(\R^n)$ is equipped with the norm 
 \begin{align*}
 \|f\|_{H^{s,r}(\R^n)}=\|(I-\Delta)^{s/2}f\|_{L^r(\R^n)} =  & \ \|f\|_{L^r(\R^n)}+\|\mathcal D^s f\|_{L^r(\R^n)} \\
 \simeq  & \ \|f\|_{L^r(\R^n)}+\| D^s f\|_{L^r(\R^n)},
\end{align*} 
where $D^s$ is the homogenous derivative defined in \eqref{eq:homD}. 
Observe that, for $r=2$, $H^{s,2}(\R^n)\equiv H^{s}(\R^n)$, the usual $L^2$--based Hilbert space. 
In addition, a straightforward calculation reveals that 
\begin{equation}\label{3}
\|\mathcal D^s f\|_{L^2(\R^n)}=c_n\||\xi |^s \widehat f \, \|_{L^2(\R^n)} \equiv c_n \| D^s f\|_{L^2(\R^n)},
\end{equation} 
with $c_n$  a constant depending only on the dimension $n$.  
If $(q,r)$ is an admissible pair as defined in Section 2, then the space 
\begin{equation} \label{solutionspace}
W^T_{s,n}\, = \, C\big([0,T];H^s(\R^n)\big)\cap L^q\big([0,T|;H^{s,r}(\R^n)\big) 
\end{equation}
will also appear.   Of course, these spaces depend  on the admissible pair $(q,r)$, 
but this dependence is suppressed in the notation.  
The following local well-posedness theorem established in \cite{CW} makes use of both the
Bessel-potential spaces and the latter, spatial-temporal spaces.
\begin{proposition}\label{prop:CW}
Let $s>s_{p,n}=\frac{n}{2}-\frac{2}{p},$ $s>0,$ with $s$ otherwise arbitrary if $p$ is 
an even integer, $s < p+1$ if $p$ is an odd integer and  
$\lfloor s \rfloor <p$ if $p$ is not an integer. For given initial data $u_0\in H^s(\R^n)$, 
\begin{enumerate}
\item there exist $T=T(\|u_0\|_{H^s})>0$ and a unique solution
\[
u\in C\big([0,T];H^s(\R^n)\big)\cap L^q\big([0,T|;H^{s,r}(\R^n)\big)\equiv W^T_{s,n}
\]
for all  pairs $(q, r)$ admissible in the sense of Definition \ref{def:adm}, 
 and
\item the local existence time $T=T(\|u_0\|_{H^s}) \to +\infty$, as $\|u_0\|_{H^s(\R^n)}\to 0.$
\end{enumerate}
\end{proposition}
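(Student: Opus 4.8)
The plan is to realize the solution of \eqref{eq:NLS} as a fixed point of the Duhamel map and to run the contraction-mapping argument on the spaces $W^T_{s,n}$ of \eqref{solutionspace}, with the linear flow controlled by the Strichartz estimates of Lemma~\ref{lem:Strich} and the nonlinearity $g(u)=\pm|u|^pu$ controlled by fractional-calculus inequalities. Writing $S(t)=e^{it\Delta}$, I set
\[
\Phi(u)(t)=S(t)u_0\mp i\int_0^t S(t-\tau)\,g(u)(\tau)\,d\tau,
\]
and look for a fixed point in a closed ball of $W^T_{s,n}$ for one conveniently chosen admissible pair $(q,r)$ — the natural choice being the pair adapted to the nonlinearity, for which $r$ sits near $p+2$ so that Hölder's inequality in space distributes the $p+1$ copies of $u$ cleanly among $L^r$-factors. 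The homogeneous Strichartz estimate gives $\|S(\cdot)u_0\|_{W^T_{s,n}}\le C\|u_0\|_{H^s}$, while the inhomogeneous estimate reduces control of the Duhamel term to bounding $g(u)$ in the dual Strichartz norm $L^{q'}([0,T];H^{s,r'})$.

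The crux is therefore the nonlinear estimate: one must show, via the fractional Leibniz and chain rules in the Bessel-potential spaces, that
\[
\big\|g(u)\big\|_{L^{q'}([0,T];H^{s,r'})}\le C\,T^{\theta}\,\|u\|_{W^T_{s,n}}^{p+1}
\]
for some $\theta>0$, together with the companion difference estimate for $g(u)-g(v)$. Here I would use Hölder in space to split the $p+1$ factors, the fractional product rule to place the full $s$ derivatives onto a single factor, and the fractional chain rule to dominate the top-order term $\|D^s g(u)\|$ by a product of a power $\|u\|^p$ in a suitable Lebesgue norm and $\|D^s u\|$ in another. The strictly positive power $T^\theta$, which both makes $\Phi$ a contraction on a short interval and is what ultimately produces assertion~(ii), comes from the strict subcriticality $s>s_{p,n}=\frac n2-\frac2p$: this inequality leaves a surplus in the time exponent, so that a Hölder step in $t$ on the finite interval $[0,T]$ yields a favorable power of $T$. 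With these two estimates in hand, a standard computation shows that $\Phi$ maps a small ball of $W^T_{s,n}$ into itself and is a strict contraction once $T=T(\|u_0\|_{H^s})$ is small, and the Banach fixed-point theorem then furnishes a unique solution.

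The main obstacle is precisely the validity of the fractional chain rule for $g(z)=|z|^pz$, whose smoothness is limited unless $p$ is an even integer. Differentiating $g$ repeatedly produces singular powers $|z|^{p+1-k}$, so only finitely many derivatives stay continuous, the number being governed by $p$; this is exactly the origin of the three hypotheses on $s$. When $p$ is an even integer, $g$ is a polynomial in $u$ and $\bar u$ and hence smooth, so $s$ is unrestricted; when $p$ is an odd integer, $g$ is $C^p$ but its $(p+1)$-st derivative is discontinuous, forcing $s<p+1$; and when $p$ is not an integer, the top continuous derivative of $g$ is only Hölder of order $p-\lfloor p\rfloor$, which permits fractional differentiation up to the order encoded by $\lfloor s\rfloor<p$. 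Once the nonlinear estimate is secured within these ranges, the fixed point provides a unique solution for the chosen pair $(q,r)$; re-inserting this solution into the Duhamel formula and applying the Strichartz estimates of Lemma~\ref{lem:Strich} once more for an arbitrary admissible pair upgrades its membership to every admissible $(q,r)$, which establishes~(i). Finally, assertion~(ii) follows by tracking how the contraction radius and the gain $T^\theta$ depend on $\|u_0\|_{H^s}$: as the data shrink, the admissible time is forced to grow without bound.
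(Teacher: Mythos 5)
Your proposal is correct and follows exactly the route the paper indicates: the paper does not prove Proposition \ref{prop:CW} itself but cites Cazenave--Weissler \cite{CW} and remarks only that the proof is a fixed-point argument based on the Strichartz estimates of Lemma \ref{lem:Strich}, which is precisely the contraction-mapping scheme you lay out, including the correct role of the subcriticality $s>s_{p,n}$ in producing the factor $T^{\theta}$ and the correct reading of the hypotheses on $s$ as regularity restrictions on $z\mapsto|z|^pz$ needed for the fractional chain rule.
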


\begin{remark}The proof of this result follows  from a fixed point argument based on the Strichartz estimates displayed in Lemma \ref{lem:Strich}.  Recall that 
the notation $\lfloor s \rfloor$ connotes the largest integer less than or equal to $s$. \end{remark}

To apply Proposition \ref{prop:CW} in our context, we need to show that the class of initial data constructed in Section \ref{sec:lin} 
(yielding dispersive blow up for the free Schr\"odinger evolution) admits  Sobolev 
class regularity that will allow the use of Lemma \ref{prop:CW}. 
 Via scaling and translation, dispersive blow up can be achieved at any point $(x_*, t_*)$ in 
space-time, so without loss of generality, fix $(x_*, t_*) = (0, 1)$ and focus upon the 
initial data
\begin{equation}\label{eq:IC}
u_0(x) = \frac{e^{- 4 i |x |^2}}{(1+|x|^2)^m}\ ,\quad \text{with $\frac{n}{4} < m \le \frac{n}{2}$.}
\end{equation}
This initial value $u_0$ has Sobolev regularity explained in the next lemma.
\begin{lemma}\label{lem:linear}
Let  $u_0$ be as depicted  in \eqref{eq:IC}. Then
$u_0\in H^s(\R^n)$ if \;$2m>s+\frac{n}{2}$. In particular, if $m=\frac{n}{2},$   
then $u_0\in H^s(\R^n)$ for any $ s\in (0,\frac{n}{2})$, whereas if $m=\frac{n}{4}^+,$ then  $s \in (0,0^+)$.
\end{lemma}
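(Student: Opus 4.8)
The plan is to pass to the Fourier side and show that the chirp $e^{-4i|x|^2}$ degrades the (otherwise unlimited) Sobolev regularity of the profile $h(x):=(1+|x|^2)^{-m}$ by exactly $n/2$ derivatives relative to its spatial decay. First I would record that $h$ itself belongs to $H^\sigma(\R^n)$ for \emph{every} $\sigma\ge 0$: indeed, as noted after Lemma~\ref{lem:IC}, $\widehat h$ is (up to a constant) the modified Bessel function $|\xi|^{m-n/2}K_{n/2-m}(|\xi|)$, which behaves like $|\xi|^{2m-n}$ near the origin --- square integrable there precisely because $m>\frac n4$ --- and decays exponentially as $|\xi|\to\infty$. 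Hence $\widehat h\in L^1(\R^n)$ and $\langle\xi\rangle^{\sigma}\widehat h\in L^2(\R^n)$ for all $\sigma$, so that the entire Sobolev deficit of $u_0=e^{-4i|\cdot|^2}h$ stems from the oscillatory factor. Completing the square in the defining integral then gives, with $q:=\xi/8$,
\[
\widehat{u_0}(\xi)=c_n\,e^{i|\xi|^2/16}\,I(q),\qquad I(q):=\int_{\R^n}e^{-4i|x|^2}\,(1+|x-q|^2)^{-m}\,dx ,
\]
the integral being read as an oscillatory (equivalently $L^2$) limit; its convergence is guaranteed by the representation $\widehat{u_0}=c_n\,\widehat g\ast\widehat h$ with $\widehat g(\xi)=c_n'e^{i|\xi|^2/16}$ and $\widehat h\in L^1$. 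Since $\abs{e^{i|\xi|^2/16}}=1$, one has $u_0\in H^s(\R^n)$ if and only if $\langle\xi\rangle^{s}\,I(\xi/8)\in L^2(\R^n)$.

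The heart of the matter is the uniform decay estimate
\[
\abs{I(q)}\le C\,\langle q\rangle^{-2m},\qquad q\in\R^n ,
\]
which I would prove by stationary phase. The phase $\Phi(x)=-4|x|^2$ has a single, nondegenerate critical point at $x=0$, whereas for large $|q|$ the amplitude $(1+|x-q|^2)^{-m}$ concentrates near $x=q$, far from that critical point. Splitting the integral into the unit ball $\{|x|\le 1\}$ and its complement, on the ball the amplitude is $O(\langle q\rangle^{-2m})$ over a region of bounded volume, while on the complement $\nabla\Phi=-8x$ is bounded away from zero, so $N$ integrations by parts with $L=\tfrac1i\tfrac{\nabla\Phi\cdot\nabla}{|\nabla\Phi|^2}$ produce a gain $|x|^{-N}$ against an amplitude all of whose derivatives decay like $\langle x-q\rangle^{-2m}$. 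Taking $N$ larger than $n+2m$ and using $\langle x-q\rangle\gtrsim\langle q\rangle$ away from $x=q$ (together with the bounded volume near $x=q$) then yields the claimed bound.

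Granting this estimate, and using $\langle\xi/8\rangle^{-2m}\lesssim\langle\xi\rangle^{-2m}$, I obtain
\[
\int_{\R^n}\langle\xi\rangle^{2s}\,\abs{I(\xi/8)}^2\,d\xi \;\lesssim\; \int_{\R^n}\langle\xi\rangle^{2s-4m}\,d\xi ,
\]
which is finite exactly when $2s-4m+n<0$, that is, when $2m>s+\frac n2$; the two quoted special cases follow by inserting $m=\frac n2$ (giving $s<\frac n2$) and letting $m\to(\frac n4)^+$ (giving $s\to 0^+$). As a sanity check on the exponent, conjugation by the chirp gives $e^{4i|x|^2}D_j\,e^{-4i|x|^2}=D_j-8x_j$, so $\langle D\rangle^s u_0$ has the same $L^2$ norm as $\langle D-8x\rangle^s h\approx\langle 8x\rangle^s h$, and $\langle x\rangle^{s}h\in L^2$ iff $s<2m-\frac n2$, in agreement with the above.

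The main obstacle is precisely the oscillatory-integral bound for $I(q)$. Because $m\le\frac n2$ the integrand is \emph{not} absolutely integrable, so the decay must be extracted from cancellation rather than from size; the delicate point is controlling the intermediate region between the critical point $x=0$ and the amplitude bump at $x=q$, which is what forces the number of integrations by parts to exceed $n+2m$ (and, in the borderline case $m=\frac n2$, to absorb a harmless logarithmic loss). Everything else --- the reduction to $I$, the rescaling in $\xi$, and the final power count --- is routine.
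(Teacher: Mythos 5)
Your argument is correct, but it takes a genuinely different route from the paper's. The paper works entirely on the physical side: for $0<s<1$ it uses the Stein characterization of $H^{s,2}$ via the operator $\mathcal D^s$ of \eqref{eq:frac}, the pointwise bound $|\mathcal D^s e^{i|x|^2}|\le c_n(1+|x|^s)$ and the Leibniz-type inequality $\|\mathcal D^s(fg)\|_{L^2}\le\|f\mathcal D^s g\|_{L^2}+\|g\mathcal D^s f\|_{L^2}$ from Nahas--Ponce, which reduce everything to the finiteness of $\||x|^s(1+|x|^2)^{-m}\|_{L^2}$, i.e.\ to $2m-s>\tfrac n2$; integer $s$ is handled by the classical Leibniz rule and $s=s'+k$ by differentiating $k$ times first. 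You instead compute $\widehat{u_0}$ explicitly by completing the square and establish the pointwise decay $|\widehat{u_0}(\xi)|\lesssim\langle\xi\rangle^{-2m}$ by non-stationary phase, after which membership in $H^s$ is a one-line power count. What your route buys: it is self-contained (no fractional Leibniz machinery), it treats all $s\ge0$ simultaneously without case distinctions, and it yields sharper pointwise information on $\widehat{u_0}$ that makes the threshold $s<2m-\tfrac n2$ transparent; the price is that $I(q)$ is not absolutely convergent for $m\le\tfrac n2$, so both the definition of $I$ and the repeated integration by parts on $\{|x|>1\}$ require a cutoff/limiting justification, which you correctly flag. One small imprecision: near $x=q$ the relevant region $\{|x-q|<|q|/2\}$ has volume of order $|q|^n$, not bounded volume; what actually closes that case is $|x|^{-N}\lesssim|q|^{-N}$ there together with $\int_{|x-q|<|q|/2}\langle x-q\rangle^{-2m}\,dx\lesssim|q|^{\,n-2m}$ (a logarithm when $2m=n$), so any $N\ge n$ suffices and your choice $N>n+2m$ is safely more than enough. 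Your closing conjugation identity $e^{4i|x|^2}D_je^{-4i|x|^2}=D_j-8x_j$ is precisely the mechanism that the paper's physical-space proof exploits, so the two arguments are consistent at the heuristic level even though their technical implementations differ.
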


\begin{proof} Consider first the case $0<s<1$.  
For $s$ in this range, Propositions 1 and 2 in \cite{NP} provide the inequalities 
\[
| \mathcal D^s e^{i|x|^2}|\leq c_n(1+|x|^s)
\]
and
\[
\|\mathcal D^s(fg)\|_{L^2(\R^n)}\leq \|f\mathcal D^s g\|_{L^2(\R^n)}+\|g\mathcal D^s f\|_{L^2(\R^n)},
\]
where $\mathcal D^s$ is defined in \eqref{eq:frac}.
Combining these estimates with identity \eqref{3} and using interpolation, one arrives 
at the inequality
\begin{align*}
&\, \left\Vert \mathcal D^s \left(\frac{e^{i|x|^2}}{(1+|x|^2)^m}\right)\right\Vert_{L^2(\R^n)} \\ 
 &\, \leq  \left\Vert\frac{1}{(1+|x|^2)^m}\mathcal D^s(e^{i|x|^2})\right\Vert _{L^2(\R^n)}+\left\Vert \mathcal D^s\left(\frac{1}{(1+|x|^2)^m}\right)\right\Vert_{L^2(\R^n)}\\
&\, \leq c_n \left\Vert \frac{1}{(1+|x|^2)^m}\right\Vert_{L^2(\R^n)}+c_n \left\Vert \frac{|x|^s}{(1+|x|^2)^m}\right\Vert_{L^2(\R^n)}\\
&\quad \  +\left\Vert \frac{1}{(1+|x|^2)^m}\right\Vert_{L^2(\R^n)}^{1-s} \left\Vert D\left(\frac{1}{(1+|x|^2)^m}\right)\right\Vert^s_{L^2(\R^n)}.
 \end{align*}
The right-hand side of this inequality is finite if and only if $2m-s>\frac{n}{2}.$  

If $s$ is a positive integer, the result follows from Leibnitz' rule. 
If instead, $s = s' +k$ where $0 < s' < 1$, then simply apply the above analysis 
to the $k^{\rm th}$--deriviative $u_0^{(k)}$.    
\end{proof}

Notice that if $m = \frac{n}{2}$, we can certainly choose the value $s$ in the interval $(\frac{n}{2}-\frac{2}{p},\frac{n}{2}]$ where Proposition  \ref{prop:CW}  applies.


\subsection{Proof of dispersive blow up for nonlinear Schr\"odinger equations}\label{sec:DBUNLS}

Here is the detailed statement of dispersive blow up for the initial-value problem \eqref{eq:NLS}.
In this theorem, the 
Lebesgue index $p\geq \lfloor \frac{n}{2} \rfloor$ if $p$ is not an even integer.

\begin{theorem}\label{thm:main}
Given $t_{*}>0$ and $x_*\in \R^n,$ for any $s\in (\frac{n}{2}-\frac{1}{2p},\frac{n}{2}]$,
 there are initial data  $u_0\in H^s(\R^n)\cap L^{\infty}(\R^n)\cap C^{\infty}(\R^n)$ 
and a $T = T(\|u_0\|_{H^s})>t_*$
 such that 
\begin{enumerate}   
\item the  initial-value problem   \eqref{eq:NLS} has a unique solution $u$ 
in the class described in Proposition   \ref{prop:CW} which is defined 
at least on  the time interval $[0,T]$,   and
\item $u$ exhibits dispersive blow up, which is to say,
$$ \lim_{{(x,t)\in \R^n\times [0,T] \to (x_*,t_*)}} |u(x,t)|=+\infty.$$
\item Moreover, $u$ is a continuous function of $(x,t)$ on $\R^n\times( [0,T])\setminus \lbrace t_*\rbrace)$ and
\item $u(\cdot,t_*)$ is a continuous function of $x$ on $\R^n\setminus \lbrace x_* \rbrace.$
\end{enumerate}
\end{theorem}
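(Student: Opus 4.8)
The plan is to feed the nonlinear flow exactly the data that produces linear dispersive blow up, and then to argue that the nonlinear interaction contributes only a globally continuous, bounded correction, so that the singularity of the linear part persists. Concretely, given $(x_*,t_*)$ I would take the datum of Lemma \ref{lem:IC} with $q=x_*$, $\alpha=\frac{1}{4t_*}$ and $m=\frac{n}{2}$, namely a small multiple $\delta u_0$ of
\begin{equation*}
u_0(x)=\frac{e^{-i\frac{1}{4t_*}|x-x_*|^2}}{(1+|x|^2)^{n/2}},
\end{equation*}
which is manifestly in $C^\infty(\R^n)\cap L^\infty(\R^n)$. By Lemma \ref{lem:linear} this $u_0$ lies in $H^s(\R^n)$ for every $s<\frac{n}{2}$, in particular on the interval $(\frac{n}{2}-\frac{1}{2p},\frac{n}{2})$ of the statement, which sits strictly above the Cazenave--Weissler threshold $s_{p,n}=\frac{n}{2}-\frac{2}{p}$ since $\frac{1}{2p}<\frac{2}{p}$. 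Proposition \ref{prop:CW} then yields a unique solution $u\in W^T_{s,n}$; choosing the amplitude $\delta>0$ small and invoking part (ii) of that proposition forces $T>t_*$, which settles the existence assertion (i). Note that rescaling the amplitude neither moves nor removes the linear singularity.

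I would then split the solution by Duhamel's formula,
\begin{equation*}
u(x,t)=e^{it\Delta}u_0(x)\pm i\int_0^t e^{i(t-s)\Delta}\big(|u|^pu\big)(\cdot,s)\,ds=:e^{it\Delta}u_0(x)\pm i\,\boN(x,t),
\end{equation*}
and observe that Lemma \ref{lem:IC} already delivers the blow up of the linear term $e^{it\Delta}u_0$ at $(x_*,t_*)$ together with its continuity off the slice $\{t=t_*\}$ and on that slice away from $x_*$. The entire theorem therefore reduces to the single claim that the Duhamel remainder $\boN$ is a continuous and bounded function on $\R^n\times[0,T]$: adding such a remainder to a function that blows up at one point preserves the blow up as well as all the stated continuity properties. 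This is the crux of the argument.

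To control $\boN$ I would first bound the nonlinearity. Using the membership of $u$ in the Strichartz component $L^q([0,T];H^{s,r})$ of $W^T_{s,n}$ together with the fractional Leibniz and chain rules, one places $|u|^pu$ in an appropriate space-time Lebesgue--Sobolev norm; it is precisely here that the threshold $s>\frac{n}{2}-\frac{1}{2p}$ and the requirement $p\geq\lfloor\frac{n}{2}\rfloor$ enter, guaranteeing that the power nonlinearity does not destroy too much regularity. When $n=1$ the conclusion is then immediate by the same reasoning used for the linear potential in \eqref{eq:Vduhamel}--\eqref{bound}: the free kernel carries the locally integrable singularity $(t-s)^{-1/2}$, so a Cauchy--Schwarz/dispersive estimate shows $\boN$ is locally bounded, after which Lebesgue's dominated convergence theorem upgrades boundedness to continuity.

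The hard part is $n\geq2$, where the kernel singularity $(t-s)^{-n/2}$ is no longer integrable and the naive bound breaks down. Here I would invoke the sharp global smoothing estimate for the Duhamel integral proved in Section 4: it shows that $\boN$ is smoother than the forcing $|u|^pu$ by the decisive margin needed to land $\boN$ in a space that embeds continuously into $C(\R^n)$ uniformly for $t\in[0,T]$ --- heuristically into some $H^{\sigma}$ with $\sigma>\frac{n}{2}$ --- whence $\boN\in C(\R^n\times[0,T])$ after a final application of dominated convergence in $t$. I expect this regularity accounting to be the main obstacle: one must weigh the derivative gain coming from the smoothing estimate against the derivative loss incurred by $|u|^pu$, and it is exactly this balance that pins down the admissible interval of $s$ and the constraint on $p$. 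Once $\boN$ is known to be continuous and bounded, assertions (ii)--(iv) follow at once from the corresponding properties of the explicit linear term.
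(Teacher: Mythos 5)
Your proposal is correct and follows essentially the same route as the paper: the data of Lemma \ref{lem:IC} scaled small so that Proposition \ref{prop:CW} gives $T>t_*$, Duhamel splitting, a direct H\"older/integrable-kernel bound on the integral term when $n=1$, and the half-derivative gain of Proposition \ref{prop:key} (landing $I$ in $H^{s+1/2}$ with $s+\tfrac12>\tfrac{n}{2}$, hence in $C_{\rm b}$) when $n\ge 2$. The only cosmetic difference is that the paper, in the case $n\ge 2$, lets $m$ range over $(\tfrac n2-\tfrac1{4p},\tfrac n2]$ rather than fixing $m=\tfrac n2$, which changes nothing essential.
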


This theorem extends the results of \cite{BS2} to the cases where $n\geq 2$ and $p\geq 3.$ 
Notice that the nonlinearity $y \mapsto |y|^py$ is smooth when $p$ is an even integer.  
Otherwise, it has finite regularity and hence the restriction on $p$ in those cases.

\begin{proof} The proof is provided in detail  for $p>0$ in the case $n=1$ and, when $n\geq 2$, 
for the case $p=2k$,
$k$ a positive integer.  It 
will be clear from the argument that the result extends to the case of 
 $p\geq \lfloor \frac{n}{2} \rfloor$ if $p$ is not an even integer. 

As already mentioned, we may assume that the  dispersive blow up for the 
free Schr\"odinger group $S(t)$ 
occurs at $x_* = 0$ and $t_*=1$. Note that the same is true for initial data of the form $\delta u_0$, where $u_0$ is 
as in \eqref{eq:IC} with $m = \frac{n}{2}$, say, $\delta > 0$  arbitrary and $s$ satisfying 
the conditions in Proposition \ref{prop:CW}. 
In view of  part (3) of the latter proposition,  the local existence time $T^* = T(\| \delta u_0\|_{H^s})>0$ can be made arbitrarily large by choosing $\delta$  sufficiently small 
and hence we can always achieve $T^*>1 = t_* $.\\

{\it Step 1.}  Take as initial 
data $\delta u_0$, where $u_0$ is as in \eqref{eq:IC} with $m = \frac{n}{2}$.  Let $s \in (\frac{n}{2} - \frac{2}{p}, \frac{n}{2}]$ with $p \geq \lfloor\frac{n}{2}\rfloor$.   Then 
$s$ satisfies the conditions of Proposition \ref{prop:CW}.  As noted above, by choosing 
$\delta$ small enough, we can be sure that the solution $u$ of \eqref{eq:NLS} 
emanating from $u_0$ 
exists and is unique in $C([0,T]:H^s)$ where $T > t_* = 1$.   

Duhamel's formula allows us to represent $u$ in the form
\begin{equation}\label{eq:duhamel}
u(x,t) = e^{i t \Delta} u_0(x) \pm i \int_0^t e^{i (t-s) \Delta} |u(x,s)|^p u(x,s) \, ds =: e^{i t \Delta} u_0(x) \pm i I (x,t) ,
\end{equation}
where, at least formally, $I(x,t)$ can be written as the double integral
\begin{equation}\label{eq:I}
I(x,t)=\frac{1}{(4i \pi t)^{n/2}} \int_0^t\int_{\R^n}\frac{1}{(t-s)^{\frac{n}{2}}}\exp \left(i\frac{|x-y|^2}{4(t-s)}\right) |u(y,s)|^p  u(y,s) \, dy \, ds.
\end{equation}
The first term on the right-hand side of \eqref{eq:duhamel} 
 exhibits dispersive blow up at $(x_*, t_*) = (0,1)$ on account of the choice of $u_0$.   
 If it turns out that  $I$ is continuous for $(x,t)\in \R^n\times [0,T]$, then it is 
immediately concluded that \eqref{eq:duhamel} (and thus \eqref{eq:NLS}) exhibits 
dispersive blow up at the same point $(x_*, t_*)=(0,1)$.  To show that $I(x,t)$ 
is continuous, it suffices to prove that it is locally bounded as a function of $x$ and $t$, 
since then  Lebesgue's dominated convergence theorem will imply $I$ is continuous on $\R^n\times [0,T]$. \\

{\it Step 2.}  Consider  $n=1$ first, since  a more direct proof can be made 
in this case.  
The  initial data is 
\[
u_0(x)= \, \frac{\delta e^{-i4 x^2}}{(1+x^2)^{\frac{1}{2}}},\quad x\in \R.
\] 
The function $u_0$ lies in $H^s(\R)$ for any $s$ in the range  $0\leq s<\frac{1}{2}.$ 
Proposition \ref{prop:CW} then provides a local in time solution $u \in C([0,T]; H^s(\R))$ to the Cauchy problem \eqref{eq:NLS} provided that 
\[
0<s<\frac{1}{2} \quad \,\, \text{and} \,\, \quad 0<p\leq \frac{4}{1-2s}.
\]
As mentioned already, $T$ may be taken larger than 1 by choosing $\delta$ small.  By Sobolev imbedding, $H^s(\R)\subset L^{r+1}(\R)$ if $\frac{1}{r+1}\geq \frac{1}{2}-s.$
Hence, for  $s=\frac{1}{2}-\eps$, where $\eps>0$ is fixed and small, it is inferred  
that $u\in C([0,T]; L^{r+1}(\R))$ for all $r$ in the range 
\[
0<r\le \min \left\{\frac{2}{\eps}, \frac{1-\eps}{\eps}\right\}=\frac{1-\eps}{\eps}. 
\]
As $\epsilon > 0$ was arbitrary, it  follows that  
$u(\cdot, t)\in L^{r+1}(\R)$, for  $r\geq 1$ arbitrarily large. 
In consequence, $I(x,t)$ is locally bounded.  
Indeed, using H\"older's inequality, it is seen that
\begin{align*}
|I(x,t)| \le &\ \int_0^t \frac{1}{(t-s)^{1/2}} \, \big\| u(\cdot, s)\big\|_{L^{p+1}}^{p+1} \, ds \\
\le &\  \left(\int_0^t \frac{1}{(t-s)^{\gamma/2}} \, ds \right)^{\frac{1}{\gamma} }
 \left(\int_0^t \big\| u(\cdot, s)\big\|_{L^{p+1}}^{\gamma' (p+1)} \, ds \right)^{\frac{1}{\gamma'} }
\end{align*}
where $\frac{1}{\gamma}+\frac{1}{\gamma'}=1$ and $\gamma \in (1,2)$. Having in mind that $u\in C([0,T]; L^{p+1}(\R))$, it transpires that
\[ |I(x,t)| \le C T^{\frac{1}{\gamma'}} \sup_{t\in[0,T]} \| u(\cdot, t) \|_{L^{p+1}}^{\gamma'(p+1)},
\]
for all $x \in \R$, which concludes the proof in the case $n=1$.\\

{\it Step 3.} In  case $n\ge 2$,  the strategy employed 
above no longer works beause the factor $t\mapsto t^{-n/2}$ appearing 
in the representation formula \eqref{eq:I} is no longer locally integrable. However, it will be shown 
in Proposition \ref{prop:key} below that the double integral $I$ is in fact half a derivative 
smoother than one would naively expect. To make use of this result,  choose initial data $u_0$ 
of the form \eqref{eq:IC} with $m \in (\frac{n}{2}-\frac{1}{4p},\frac{n}{2}]$, where 
$p\ge \lfloor\frac{n}{2}\rfloor$.
It is immediately inferred from Lemma \ref{lem:linear} that 
\[u_0\in H^s(\R^n)\quad \text{for any} \,\, s\in \Big( \frac{n}{2}-\frac{1}{2p},\frac{n}{2}\Big].\]
Notice that 
\[\frac{n}{2}-\frac{1}{2p}>\frac{n}{2}-\frac{2}{p}=s_{p,n}\] 
so that Proposition \ref{prop:CW} applies and therefore $u\in W^T_{s,n}$ (see \eqref{solutionspace})
satisfies the integral form \eqref{eq:duhamel} of the nonlinear Schr\"odinger equation.  
Proposition \ref{prop:key} below then shows  that 
\[
I \in C([0,T]; H^{s+1/2}(\R^n)),
\]
and hence, for $s\in (\frac{n}{2}-\frac{1}{2p},\frac{n}{2}],$ one has
\[
I \in C(\R^n\times [0,T])\cap L^\infty(\R^n\times [0,T]).
\]
The proof is complete.
\end{proof}

\begin{remark} It was shown in \cite[Theorem 2.1]{BS2} that dispersive blow up results 
are stable under smooth and localized perturbations of the data.  That is to say,  
if they hold for data $u_0,$ then they also hold for data $u_0+w$ where, for instance, $w \in H^\infty(\R^n).$   In particular the data 
leading to DBU do {\it not} need to be radially symmetric. The same is true for Theorem \ref{thm:main}.
The proofs of these results consists of writing the equation satisfied by $w$ and showing
that it has bounded, continuous solutions.  The details follow exactly the argument 
given already in \cite{BS2}.  

In addition, there is  a kind of density of initial data leading to dispersive blow up.  More 
precisely,  given $u_0\in H^s(\R^n)$ with $s>n/2$ and $\epsilon >0,$ there exists
$\phi\in H^r(\R^n), $ $r\in \big(\frac{n}{2}-\frac{1}{2p},\frac{n}{2}\big]$ with 
\begin{equation}
  \label{density}
    \|u_0-\phi \|_{H^r(\R^n)}<\epsilon,
\end{equation}
such that the initial data $\phi$ leads to dispersive blow up in the sense of Theorem \ref{thm:main}. Indeed, it suffices to take 
$\phi=u_0+\delta v_0,$ where $v_0$ leads to dispersive blow up and  $\delta >0$ is small enough 
that \eqref{density} holds. A combination of Theorem \ref{thm:main} and 
 Proposition \ref{prop:CW} then 
implies the above assertion.
\end{remark}


\section{Global smoothing of the Duhamel term and applications}\label{sec:global}

In this section, the proof of Theorem \ref{thm:main} is completed by showing the Duhamel 
term in the integral representation \eqref{eq:duhamel} of the solution  of the initial-value problem \eqref{eq:NLS}
is smoother than is the linear term involving only the initial data.  In fact, several
different results of smoothing by the Duhamel term will be developed, though 
the first one is enough for the dispersive blow up result in Section 3.  

\subsection{Smoothing by half a derivative}

The following proposition
suffices to complete the proof 
of Theorem \ref{thm:main}.  

\begin{proposition}\label{prop:key}
Let $u_0\in H^s(\R^n),$\;$s>\frac{n}{2}-\frac{1}{2p}$ with $p\ge 1$ 
and $\lfloor p+1\rfloor \geq s + \frac12$ if $p$ is not an even integer. 
Let  $u\in W^T_{s,n}$ be the solution of \eqref{eq:NLS} satisfying
\[
u(x,t)=e^{it\Delta}u_0(x)\pm i \int_0^te^{i(t-s)\Delta} |u(x,s)|^pu(x,s)ds=:e^{it\Delta}u_0(x) \pm i I(x,t).
\]
Then $I \in C([0,T]; H^{s+\frac12}(\R^n)).$ 
\end{proposition}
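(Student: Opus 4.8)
The plan is to show that the Duhamel integral term $I(x,t)$ gains half a derivative of smoothness relative to the nonlinearity by combining the local smoothing estimate of Lemma \ref{lem:smooth} (the Kato-type estimate giving $\sup_{x_j}\int |D_{x_j}^{1/2}e^{it\Delta}f|^2\,dx'\,dt \le C\|f\|_{L^2}^2$) with the dual (retarded/inhomogeneous) smoothing estimate and a nonlinear product estimate. The heuristic is that each factor $e^{i(t-s)\Delta}$ in the Duhamel term carries a half-derivative local smoothing gain, and by a $TT^*$-type duality argument this gain survives in the time-integrated expression. So I would first establish, as the key linear input, an inhomogeneous smoothing estimate of the schematic form
\[
\Big\| D^{1/2}_{x_j}\int_0^t e^{i(t-s)\Delta} g(\cdot,s)\,ds \Big\|_{L^\infty_{x_j}L^2_{x',t}} \le C \| g\|_{L^1_{x_j}L^2_{x',t}},
\]
obtained by dualizing Lemma \ref{lem:smooth} and applying a Christ--Kiselev or Minkowski argument to pass from the global time integral to the retarded one.

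Granting such an inhomogeneous estimate, the second step is to control the nonlinear source term $F(u) = |u|^p u$ in the norm dual to the one appearing on the right. Concretely, I would need to bound $\|D^{s+1/2}$-type quantities of $I$ by placing $F(u)$ in an appropriate $L^1$-in-one-variable, $L^2$-in-the-rest space, which reduces to estimating $\|D^s F(u)\|$ in suitable mixed-norm spaces. Here the hypothesis $u \in W^T_{s,n} = C([0,T];H^s) \cap L^q([0,T];H^{s,r})$ is exactly what is needed: the Strichartz-controlled norm $L^q_t H^{s,r}_x$ furnishes the integrability in the remaining variables, while the continuity in $H^s$ gives the boundedness. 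The fractional chain/Leibniz rule for $D^s$ applied to $|u|^p u$ is where the smoothness restriction on $p$ enters — this is precisely why the statement requires $\lfloor p+1\rfloor \ge s+\frac12$ when $p$ is not an even integer, so that the nonlinearity $y\mapsto|y|^p y$ has enough regularity to admit $s+\tfrac12$ derivatives distributed across the factors.

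The main obstacle I anticipate is the nonlinear estimate rather than the linear smoothing. One must show that $D^s F(u) \in $ the correct mixed Lebesgue space using a fractional Leibniz rule, and then verify that the resulting product of norms is finite using only the information $u \in W^T_{s,n}$. This requires a careful Hölder splitting: one factor of $u$ absorbs the $D^s$ derivative (measured in the Strichartz space $H^{s,r}$), while the remaining $p$ factors are placed in $L^\infty$-type or high-Lebesgue norms controlled via Sobolev embedding $H^{s,r}\hookrightarrow L^\rho$ for suitable $\rho$. Choosing the admissible pair $(q,r)$ and the auxiliary exponents so that all the Hölder conjugacies close simultaneously — including the time integrability that must land in $L^1_t$ or better after the local-smoothing duality — is the delicate bookkeeping step, and is exactly where the lower bound $s > \frac{n}{2}-\frac{1}{2p}$ on the regularity gets used.

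Finally, once $I(\cdot,t) \in H^{s+1/2}(\R^n)$ is established uniformly on $[0,T]$ with the bound controlled by $\|u\|_{W^T_{s,n}}$, the continuity in time of $t\mapsto I(\cdot,t)$ as an $H^{s+1/2}$-valued map follows from the continuity of $u$ in $H^s$, dominated convergence, and the strong continuity of the Schrödinger group, completing the proof that $I \in C([0,T];H^{s+1/2}(\R^n))$. I would relegate the routine verification of time-continuity to a short remark, since the substance of the proposition lies in the fixed-time smoothing gain.
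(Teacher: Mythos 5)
Your architecture is genuinely different from the paper's proof of Proposition \ref{prop:key}, and is in fact closer to its proof of the stronger Proposition \ref{prop:keybis}. The paper does \emph{not} extract the half derivative from the retarded propagator: it uses only the trivial bound $\Abs{\int_0^te^{i(t-s)\Delta}g\,ds}_{L^\infty_tL^2_x}\le \Abs{g}_{L^1_tL^2_x}$ (see \eqref{tterm}) and instead shows that the nonlinearity itself already carries $s+\tfrac12$ derivatives, i.e.\ $D_{x_j}^{s+1/2}(|u|^{p}u)\in L^2([0,T]\times\R^n)$. There the gain is located in the solution $u$, whose extra half derivative lives in the local-smoothing norm $L^\infty_{x_j}L^2_{x',t}$ (Lemma \ref{lem:smooth} fed through Duhamel's formula), while the remaining $p$ factors are measured in $L^{2p}_{x_j}L^\infty_{x',t}$ via the maximal-function bounds of Lemma \ref{lem:B}. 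Your route---dualize the local smoothing estimate via Christ--Kiselev and feed $D^s(|u|^pu)$ into the retarded estimate, which is exactly Lemma \ref{lem:Kis}---is viable and would give Proposition \ref{prop:key} as a half-strength version of the Proposition \ref{prop:keybis} argument. Note, however, that the output norm you wrote, $L^\infty_{x_j}L^2_{x',t}$, is not the one you need: the usable form of the dual estimate (Lemma \ref{lem:Kis}) has $L^\infty_t L^2_x$ on the left, which is what yields $I\in L^\infty([0,T];H^{s+1/2}(\R^n))$.

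The genuine gap is in the nonlinear estimate. The dual smoothing estimate forces the source term into $L^1_{x_j}\bigl(\R;L^2_{x',t}\bigr)$, where $x'=(x_1,\dots,x_{j-1},x_{j+1},\dots,x_n)$: that is $L^1$ in one \emph{spatial} variable and $L^2$ in the remaining variables \emph{and in time}, not $L^1_tL^2_x$. After the fractional Leibniz rule, H\"older in this mixed norm can place the factor carrying $D^s$ in $L^2_{x_j}L^2_{x',t}=L^2_{t,x}$, which is harmless, but it then forces the $p$ derivative-free factors into $L^{2p}_{x_j}L^\infty_{x',t}$, with a supremum over \emph{time} taken inside a spatial Lebesgue norm. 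No Sobolev embedding of $H^{s,r}$ and no Strichartz norm $L^q_tL^r_x$ controls such a quantity; the order of the norms is wrong and cannot be restored by Minkowski's inequality. This is precisely where the paper invokes the Schr\"odinger maximal function estimates of Lemma \ref{lem:B} (through \eqref{ouf}), and it is there---not in the H\"older bookkeeping of admissible pairs---that the hypothesis $s>\frac{n}{2}-\frac{1}{2p}$ originates: taking $q=2p$ in \eqref{max} requires total regularity exceeding $\frac{n-1}{2p}+n\bigl(\frac12-\frac{1}{2p}\bigr)=\frac{n}{2}-\frac{1}{2p}$. Without this ingredient your H\"older splitting cannot close. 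The remaining elements of your plan---the fractional chain/Leibniz rule as the source of the restriction on $p$, and the concluding continuity-in-time argument---are consistent with the paper.
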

In other words, the integral term $I$ is \lq\lq smoother" than the free propagator $e^{it\Delta}u_0$ by 
half a derivative.
This is  the key point needed in the proof of Theorem \ref{thm:main} for $n\ge 2$.   In the 
special case wherein
the nonlinearity $|u|^pu$ is smooth, so when $p = 2k, k$ an integer,  
Proposition \ref{prop:keybis} will show that  the Duhamel term is
almost one derivative smoother than one would expect.

\begin{remark}  The fact that
the nonlinear integral term in  Duhamel's formula is smoother than the linear one
in certain circumstances has been used in other  works 
on nonlinear dispersive equations. 
For example, in \cite{LS} it was employed to give a different 
proof of some of the results obtained in \cite{BS1}. 
In \cite{Bou}, this smoothing effect was applied to deduce  global 
well-posedness below the regularity index provided by the conservation laws of 
mass and energy.  So far as we are aware, however, the result stated in Proposition 
\ref{prop:key} has not
previously been explicitly written down.
 \end{remark}

\begin{proof} The details are provided for the case $p=2k, k\in \N.$ 
It will be clear that the arguments extend to the case 
where $p\geq 1$ is not an even integer, but $p\ge \lfloor\frac{n}{2}\rfloor$ and $n\ge2$. \\

{\it Step 1.} In the first step, useful estimates on $e^{it\Delta}u_0$ are derived. 
Start by fixing a $j\in \lbrace 1,\dots,n\rbrace$ and noticing that
\begin{equation}\label{compA}
\sup_{0\leq t\leq T}\left\{ \sup_{x_1\dots x_{j-1}x_{j+1}\dots x_n}\Big\{|e^{it\Delta} u_0(x)|\Big\} \right\} \lesssim 
\sup_{x_1\dots x_{j-1}x_{j+1}\dots x_n}\left\{|e^{it(x_j)\Delta}u_0(x)|\right\}
\end{equation}
for some $t(x_j)\in [0,T]$.  For  $q\ge 2$ and $s>\frac{(n-1)}{q}$, it follows by Sobolev embedding that
\begin{equation}\label{comp}
\begin{aligned}
\sup_{x_1\dots x_{j-1}x_{j+1}\dots x_n}|e^{it(x_j)\Delta}u_0(x)| & \, \lesssim  
\big\|e^{it(x_j)\Delta}u_0\big\|_{H^{s,q}_{x_1\dots x_{j-1}x_{j+1}\dots x_n}\left(\R^{n-1}\right)} \\
&\, =c\big\|e^{it(x_j)\Delta}J^s_{\hat \jmath}u_0\big\|_{L^q_{x_1\dots x_{j-1}x_{j+1}\dots x_n}\big(\R^{n-1}\big)}\\
&\, \lesssim \big\|\sup_t |e^{it\Delta}J^s_{\hat \jmath}u_0|\;
\big\|_{L^q_{x_1\dots x_{j-1}x_{j+1}\dots x_n} \left(\R^{n-1}\right)}
\end{aligned}
\end{equation}
where here and in the following, 
\[
J^s_{\hat \jmath }=(1-(\partial^2_{x_1}+\dots+\partial^2_{x_{j-1}}
+\partial^2_{x_{j+1}}+\dots+\partial^2_{x_n}))^{s/2}
\]
is defined via the associated Fourier symbol and the subscripts on the function spaces 
indicate which variables participate in the norm. 
Using \eqref{compA} and \eqref{comp} together with 
the estimates on the Schr\"odinger maximal function given in Lemma \ref{lem:B}, 
there appears the inequality
\begin{equation}\label{ouf}
\begin{aligned}
&\big\|e^{it\Delta}u_0\big\|_{L^q_{x_j}(\R;L^\infty_{x_1\dots x_{j-1}x_{j+1}\dots x_n t}(\R^{n}\times[0,T]))}\\
&\hspace{1cm}= \Big\|\sup_{0\leq t\leq T}\sup_{x_1\dots x_{j-1}x_{j+1}\dots x_n}|e^{it\Delta}u_0|\;\Big\|_{L^q_{x_j}(\R)}\\
&\hspace{1cm} \leq C_T\Big\|\sup_{0\leq t\leq T}|e^{it\Delta}J^s_{\hat \jmath}u_0|\;\Big\|_{L^q(\R^n)}\\
&\hspace{1cm} \lesssim \big\|u_0\big\|_{H^{\sigma+s}(\R^n)},
\end{aligned}
\end{equation}
with $q\geq 2$, $s>\frac{n-1}{q}$ and $\sigma>0$ as specified in Lemma \ref{lem:B}. 
The inequality \eqref{ouf}, together with the local smoothing estimates stated in Lemma \ref{lem:smooth}, 
reveal that
\begin{equation}\label{ouf2}
\big\|D^{\theta/2}_{x_j}e^{it\Delta} u_0\big\|_{L_{x_j}^{q/(1-\theta)}(\R;L^{2/\theta}_{x_1\cdot\cdot x_{j-1}x_{j+1}\cdot\cdot x_nt}(\R^{n-1}\times[0,T]))} 
\leq c_T\big\|u_0\big\|_{H^{(1-\theta)(\sigma+s)}(\R^n)},
\end{equation}
where $\theta\in[0,1]$, $q\geq 2$, $s>\frac{n-1}{q}$ and $\sigma>0$ as before. \\

{\it Step 2.}  To bound $\|D_{x_j}^{s+1/2} I\|_{L^\infty_t([0,T]:L^2(\R^n))}$ above,  let  $j\in \lbrace 1,\dots , n\rbrace$, $p=2k$ and write
\begin{equation}
\label{tterm}
\begin{aligned}
&\Big \|D_{x_j}^{s+1/2}  \int_0^t e^{i(t-s)\Delta}(|u|^{2k}u)(s) \, ds \Big \|_{L^\infty([0,T];L^2(\R^n))}\\
&\hspace{1cm} \leq \, c\big\|D_{x_j}^{s+1/2} (|u|^{2k} u)\big\|_{L^1([0,T];L^2(\R^n))}\\
&\hspace{1cm} \leq \, cT^{1/2}\big\|D_{x_j}^{s+1/2} (|u|^{2k} u)\big\|_{L^2([0,T]\times \R^n)}.
\end{aligned}
\end{equation}
 To estimate the right-hand side of \eqref{tterm}, the calculus of inequalities involving fractional derivatives 
derived  in \cite{KPV93} is helpful.  More precisely,  the following inequality, 
which is a particular case of those proved in 
 \cite[Theorem A.8]{KPV93}, will be used.   Let $\alpha\in (0,1), \,\alpha_1,\alpha_2\in[0,\alpha]$ with $\alpha=\alpha_1+\alpha_2$ and  let $p_1,p_2,q_1,q_2\in [2,\infty)$ be such that
 \[\frac{1}{2}=\frac{1}{p_1}+\frac{1}{p_2}=\frac{1}{q_1}+\frac{1}{q_2}.\] Then
 \begin{equation}
 \label{inequality}
 \begin{aligned}
 &\big\| D_{x_j}^{\alpha}(fg)-fD_{x_j}^{\alpha}f-g D_{x_j}^{\alpha}f  \big\|_{L^2_{x_j}(\R;L^2(Q))}\\
& \hspace{.7cm}\leq c\big\| D_{x_j}^{\alpha_1}f\big\|_{L^{p_1}_{x_j}(\R;L^{q_1}(Q))}   \big\|D_{x_j}^{\alpha_2}g\big\|_{L^{p_2}_{x_j}(\R;L^{q_2}(Q))},
 \end{aligned}
 \end{equation}
 where $Q=\R^{n-1}\times [0,T]$. 

To illustrate the use of the inequality \eqref{inequality} in estimating the right-hand side of \eqref{tterm}, 
assume without loss of generality that $s+\frac{1}{2}=1+\alpha$ with  $\alpha\in (0,1)$. 
Thus, omitting the domains of integration $\R$ and $Q$,
\begin{equation}
\label{001}
\begin{aligned}
\big\| D_{x_j}^{s+1/2}(fg)\big\|_{L^2_{x_j}L^2}& = \big\| D_{x_j}^{1+\alpha}(fg)\big\|_{L^2_{x_j}L^2} \simeq \big\| D_{x_j}^{\alpha}\partial_{x_j}(fg) \big\|_{L^2_{x_j}L^2}\\
&\leq c\Big( \big\| D_{x_j}^{\alpha}(f\partial_{x_j}g)\big\|_{L^2_{x_j}L^2}
\big\| D_{x_j}^{\alpha}(\partial_{x_j}f g)\big\|_{L^2_{x_j}L^2}\Big).
\end{aligned}
\end{equation}
By symmetry it suffices to consider only one of the terms 
on the right-hand side of \eqref{001}. From  \eqref{inequality}, with $\alpha_1=0$, 
there obtains
\begin{equation*}
\label{002}
\begin{aligned}
\big\| D_{x_j}^{\alpha}(g\partial_{x_j}f)\big\|_{L^2_{x_j}L^2}& \leq  \big\| D_{x_j}^{\alpha}(g\partial_{x_j}f)-g D_{x_j}^{\alpha}\partial_{x_j}f - \partial_{x_j}fD_{x_j}^{\alpha}g \big\|_{L^2_{x_j}L^2}\\
& \quad +\big\| g D_{x_j}^{\alpha}\partial_{x_j}f\big\| _{L^2_{x_j}L^2} + \big\|  \partial_{x_j}f D_{x_j}^{\alpha}g\big\| _{L^2_{x_j}L^2}\\
&\leq \big\| g D_{x_j}^{\alpha}\partial_{x_j}f \big\| _{L^2_{x_j}L^2} + c\big\| \partial_{x_j}f\big\|_{L^{p_1}_{x_j}L^{q_1}}   \big\|D_{x_j}^{\alpha_2}g\big\|_{L^{p_2}_{x_j}L^{q_2}}
\end{aligned}
\end{equation*}
with $p_1,p_2,q_1, q_2$ restricted as above.

Using the latter inequality to continue the inequality \eqref{tterm} yields
\begin{equation}\label{KPV}
\begin{aligned}
&\big\|D_{x_j}^{s+1/2} I\big\|_{L^\infty_t([0,T]:L^2(\R^n))}  \\ 
 &\hspace{.8cm} \leq  \ cT^{1/2} \big\|D_{x_j}^{s+1/2} (|u|^{2k} u)\big\|_{L^2([0,T]\times \R^n)}\\
&\hspace{.8cm} \leq \ cT^{1/2}\Big ( \big\|u\big\|^{2k}_{L^{4k}_{x_j}(\R; L^\infty_{x_1\dots x_{j-1}x_{j+1}\dots x_n,t}(\R^{n-1}\times[0,T]))} \\
& \hspace{1.5cm} \times \big\|D_{x_j}^{s+1/2} u\big\|_{L^{\infty}_{x_j}(\R; L^2_{x_1\dots x_{j-1}x_{j+1}\dots x_n,t}(\R^{n-1}\times[0,T]))}+R\Big),
\end{aligned}
\end{equation}
where the remainder $R$ includes only estimates for terms involving powers of $u$,  $\partial_{x_j}u$ and $D_{x_j}^{\alpha}u$ . These are straightforwardly bounded above 
 by use of \eqref{ouf2}. In fact, a bound for them is an interpolation between the first two terms 
on the right-hand side of \eqref{KPV}. It therefore remains to bound only the terms
\begin{equation}\label{eq:A}
 \big\|u\big\|^{2k}_{L^{4k}_{x_j}(\R; L^\infty_{x_1\dots x_{j-1}x_{j+1}\dots x_n,t}(\R^{n-1}\times[0,T]))}
\end{equation}
and
\begin{equation}\label{eq:B}
\big\|D_{x_j}^{s+1/2} u\big\|_{L^{\infty}_{x_j}(\R; L^2_{x_1\dots x_{j-1}x_{j+1}\dots x_n,t}(\R^{n-1}\times[0,T]))},
\end{equation}
$j = 1, \cdots ,n$, appearing in \eqref{KPV}.\\

{\it Step 3.} To bound the quantity appearing in  \eqref{eq:A}, first note that \eqref{ouf} implies
\begin{equation*}
\big \|\sup_{0\leq t\leq T}\quad\sup_{x_1\dots x_{j-1}x_{j+1}\dots x_n }|e^{it\Delta}u_0|\;\big \|_{L^{4k}_{x_j}(\R)}
\lesssim \big\|\sup_{0\leq t\leq T} |e^{it\Delta} J^s_{\hat \jmath} u_0|\;\big\|_{L^{4k}(\R^n)},
\end{equation*}
with $s>\frac{(n-1)}{4k}$. This estimate can be extended using Lemma \ref{lem:B} by 
observing that 
\begin{equation*}\label{une}
\begin{aligned}
\Big\|\sup_{0\leq t\leq T} |e^{it\Delta} J^s_{\hat \jmath} u_0|\;\Big\|_{L^{4k}(\R^n)} 
\leq \big\| J^s_{\hat \jmath} u_0\big\|_{H^{\sigma}(\R^n)}\lesssim \big\|u_0\big\|_{H^{\sigma+s}(\R^n)}=\big\|u_0\big\|_{H^{\bar{s}}(\R^n)},
\end{aligned}
\end{equation*}
where
\[
\bar{s} =s+\sigma >\frac{n-1}{4k}+n\Big(\frac{1}{2}-\frac{1}{4k}\Big)=\frac{n}{2}-\frac{1}{4k}.
\]
Inserting this inequality in the Duhamel representation \eqref{eq:duhamel} with 
\[
s\in\Big(\frac{n}{2}-\frac{1}{4k},\frac{n}{2}\Big]=\Big(\frac{n}{2}-\frac{1}{2p},\frac{n}{2}\Big],
\]
it follows that
\begin{equation*}
\big\|u\big\|_{L^{4k}_{x_j}(\R;L^\infty_{x_1\cdot\cdot x_{j-1}x_{j+1}\cdot\cdot x_n t}(\R^{n-1}\times[0,T]))}\leq C\big\|u_0\big\|_{s,2}+
\big\|J^s(|u|^pu)\big\|_{L^1_t([0,T]; L^2(\R^n))}.
\end{equation*}
Since $p=2k$, use of  a fractional Leibniz rule (see \cite{KP}) implies that
\begin{equation}\label{KaPo}
\big\|J^s(|u|^{2k}u)\big\|_{L^1([0,T]; L^2(\R^n))}\leq c\big\|u\big\|^{2k}_{L^{2k}([0,T];L^\infty(\R^n))}\big\|J^su\big\|_{L^\infty([0,T];L^2(\R^n))}.
\end{equation}

If it was known that 
\begin{equation}\label{enfin}
\|u\|_{L^{2k}([0,T];L^\infty(\R^n)))}\leq cT^{\theta}\|J^su\|_{L^q([0,T];L^r(\R^n))}
\end{equation}
for some $\theta>0$ and for some admissible Strichartz pair $(r,q)$, then the sequence 
of inequalities could be closed. To obtain \eqref{enfin},  recall that 
\[
s>\frac{n}{2}-\frac{1}{4k}=\frac{2kn-1}{4k}=\frac{n}{4kn/(2kn-1)},
\]
so we can 
take 
\[
r=\frac{4kn}{(2kn-1)}<\frac{2n}{(n-2)}, \quad \text{if $n\geq 3$}, 
\]
(the cases $n=1$ or $2$ are immediate) and $q=8k.$ By Sobolev embedding, the inequality  \eqref{enfin} 
is seen to hold with $\theta=\frac{3}{8k}$. In summary, all the terms in  
  \eqref{eq:A} are shown to be bounded.\\

{\it Step 4.} Finally, attention is turned to terms of the form appearing in \eqref{eq:B}. 
 The local smoothing estimate enunciated in Lemma \ref{lem:smooth} together 
with  Duhamel's formula imply that  
\begin{eqnarray*}
&\big\|D_{x_j}^{s+1/2} u\big\|_{L^{\infty}_{x_j}(\R; L^2_{x_1\cdot \cdot x_{j-1}x_{j+1}\cdot \cdot x_n,t}(\R^{n-1}\times[0,T]))}
\\
&\hspace{.8cm}   \leq \big\|u_0\big\|_{H^{s}(\R^n)}+\big\|J^s(|u|^{2k}u)\big\|_{L^1([0,T];L^2(\R^n))}.
\end{eqnarray*}
The right-hand side was already estimated in \eqref{KaPo}--\eqref{enfin}. 
Because of \eqref{KPV}, this shows that  there exists a $C=C(T,n)>0$ such that
\[
\big\|D_{x_j}^{s+1/2} I\big\|_{L^\infty([0,T]:L^2(\R^n))} \leq  C.
\]
Summing these estimates over $j$ for  $j=1\cdots,n$  yields the result advertised in 
the proposition.   

Finally, we remark that in the case where  $p$ is not an even integer, the restriction 
on $p$ are necessary and one needs 
to supplement  the Leibnitz-type inequality \eqref{inequality} with
the chain rule for fractional derivatives adduced in the Appendix of \cite{KPV93}. 
\end{proof}


\subsection{An even stronger smoothing property}

For large $s$ and higher values of $p$, a stronger smoothing 
 result than  that established in Proposition \ref{prop:key} holds.
\begin{proposition}\label{prop:keybis}
Let $u_0\in H^s(\R^n), s > \frac{n}{2} - \frac{1}{2p}$ with $p\geq 2$ and 
$\lfloor p+1\rfloor \geq s + \frac{1}{2}$ if $p$ is not an even integer.  
Under these hypothses, it follows that for any $\eps > 0$,  
\[
I\in C([0,T];H^{s+1-\eps}(\R^n)),
\]
 where the  notation is taken from Proposition \ref{prop:key} 
\end{proposition}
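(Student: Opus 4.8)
The plan is to run the same machine as in the proof of Proposition \ref{prop:key}, but to extract an additional half-derivative of smoothing from the Duhamel operator itself, rather than only from the nonlinear factor. Recall that in Proposition \ref{prop:key} the Duhamel term was estimated by the trivial bound $\|D_{x_j}^{s+1/2} I\|_{L^\infty_t L^2_x}\le \|D_{x_j}^{s+1/2}(|u|^pu)\|_{L^1_t L^2_x}$, which uses only $L^2$-conservation of the group; all of the $1/2$-derivative gain came from applying the local smoothing estimate of Lemma \ref{lem:smooth} to a single factor of $u$ inside the nonlinearity. Here I would instead exploit the dual (retarded) form of that same local smoothing estimate, which by itself gains half a derivative while keeping the output in $L^\infty_t L^2_x$. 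Composing the two gains --- one from the dual smoothing of the Duhamel operator and one from direct local smoothing on a single factor --- produces almost a full derivative.

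Concretely, I would first record the adjoint estimate obtained from Lemma \ref{lem:smooth} by a $TT^*$/Christ--Kiselev argument,
\[
\Big\| D_{x_j}^{1/2} \int_0^t e^{i(t-\tau)\Delta} F(\tau)\,d\tau \Big\|_{L^\infty([0,T];L^2(\R^n))} \le C\,\| F\|_{L^1_{x_j}(\R;\,L^2_{x',\tau})},
\]
where $x'$ denotes the variables other than $x_j$; the passage from the non-retarded to the retarded integral is legitimate because the relevant time-exponents are $2<\infty$. Applying this with $F$ replaced by $D_{x_j}^{s+1/2-\eps}(|u|^{2k}u)$ (and commuting $D_{x_j}^{s+1/2-\eps}$ through the group) reduces the bound on $\|D_{x_j}^{s+1-\eps} I\|_{L^\infty_t L^2_x}$ to a bound on $\|D_{x_j}^{s+1/2-\eps}(|u|^{2k}u)\|_{L^1_{x_j} L^2_{x',\tau}}$. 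I would then distribute the $s+\tfrac12-\eps$ derivatives with the mixed-norm fractional Leibniz inequality \eqref{inequality}, arranging that exactly one factor carries the top-order derivative, estimated in $L^\infty_{x_j}L^2_{x',\tau}$ via local smoothing (Lemma \ref{lem:smooth}) at the cost of only $\|u_0\|_{H^s}$, while the remaining $2k$ factors are kept bare and placed in $L^{2k}_{x_j}L^\infty_{x',\tau}$ and controlled by the Schr\"odinger maximal function estimate of Lemma \ref{lem:B}, exactly as in Steps 3--4 of the proof of Proposition \ref{prop:key}.

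The hypothesis $p\ge 2$ enters precisely here. Because the dual smoothing estimate has an $L^1_{x_j}$ norm on the right, the H\"older split in the $x_j$ variable forces the $2k$ bare factors into $L^{2k}_{x_j}$ (rather than the $L^{4k}_{x_j}$ that appeared in Proposition \ref{prop:key}, where the outer norm was $L^2_{x_j}$). Lemma \ref{lem:B} then requires the integrability exponent $2k$ to be at least $2$, i.e. $k\ge 1$, i.e. $p=2k\ge 2$. A quick check of the regularity budget shows the scheme still closes: controlling $\|u\|_{L^{2k}_{x_j}L^\infty_{x',\tau}}$ needs Sobolev regularity $\bar s>\tfrac{n}{2}-\tfrac{1}{2k}$, and since $s>\tfrac{n}{2}-\tfrac{1}{4k}>\tfrac{n}{2}-\tfrac{1}{2k}$ one may choose $\bar s\in(\tfrac n2-\tfrac1{2k},\,s]$ with room to spare. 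The loss of $\eps$ is the price paid for the strict inequalities in Lemma \ref{lem:B} (one cannot sit at the endpoint regularity), and I would not expect to reach the clean value $s+1$ without a separate endpoint analysis.

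The main obstacle, as in Proposition \ref{prop:key}, is the bookkeeping in the anisotropic mixed-norm spaces $L^{p}_{x_j}L^{q}_{x',\tau}$: one must verify that the correct version of the retarded dual smoothing estimate holds in these norms, and that the fractional Leibniz rule \eqref{inequality} can be applied so that precisely one factor receives all of the high-order derivative (the commutator remainders being handled by interpolation, as in \eqref{KPV}). Once the uniform bound $\|D_{x_j}^{s+1-\eps} I\|_{L^\infty_t L^2_x}\le C$ is in hand for each $j$, summing over $j=1,\dots,n$ gives the $H^{s+1-\eps}$ bound, and continuity in time follows from this uniform bound by the same dominated-convergence argument used throughout the paper. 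For $p$ not an even integer the additional constraints $p\ge 2$ and $\lfloor p+1\rfloor\ge s+\tfrac12$ are exactly what is needed to supplement \eqref{inequality} with the fractional chain rule from \cite{KPV93}, so that $|u|^pu$ may be differentiated $s+\tfrac12-\eps$ times; I would treat that case last, the even-integer case $p=2k$ being the transparent model.
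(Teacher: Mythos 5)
Your proposal follows essentially the same route as the paper: the extra half-derivative is extracted from the retarded dual form of the local smoothing estimate (this is precisely Lemma \ref{lem:Kis}, obtained from Lemma \ref{lem:smooth} by duality and the Christ--Kiselev lemma), and the resulting $L^1_{x_j}L^2_{x',t}$ norm of $D_{x_j}^{s+1/2}(|u|^{2k}u)$ is then handled by the same Leibniz/maximal-function/local-smoothing machinery as in Proposition \ref{prop:key}, with the H\"older split now forcing the bare factors into $L^{2k}_{x_j}$ and hence the restriction $p\ge 2$. The only cosmetic difference is your account of the $\eps$-loss: the paper attributes it to trading a little spatial regularity for a factor $T^{\delta(\eps)}$ so the estimate closes for data of arbitrary size (the full $s+1$ being recoverable for small data), rather than to the strict inequalities in Lemma \ref{lem:B}.
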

\begin{remark}The loss of $\eps$ in the regularity of $I$ is needed to obtain a factor 
$T^{\delta(\eps)},  \delta(\eps)>0$, on the right-hand side of the inequalities below which 
allows them to be closed. 
It can be recovered by assuming that the data $u_0$ is small enough in $H^s(\R^n).$
\end{remark}
The proof of Proposition {\ref{prop:keybis} uses the following smoothing estimate, which is a direct consequence of 
Lemma \ref{lem:smooth}, a duality argument and the  Christ-Kiselev lemma \cite{CK}. For a proof, see \cite[Chapter 4]{LP}.

\begin{lemma}\label{lem:Kis} For any $n\in \N$, the inequality
\[
\left\|D^{1/2}_{x_j}\int_0^te^{i(t-s)\Delta}f(\cdot,s) \, ds\right\|_{L^\infty(\R;L^2(\R^n))}
\leq C\big\|\mathcal H_j f \big\|_{L^1_{x_j}(\R;L^2_{x_1...x_{j-1}x_{j+1}...x_n t}(\R^n))}
\]
holds, where $\mathcal H_j$ denotes the {\it Hilbert transform} in the $j$-th variable, which 
is to say, 
\[
\mathcal H_j f(x):=-i \, \mathcal F^{-1} \Big ( \text{{\rm sign}}(\xi_j) \widehat f(\xi) \Big)(x) .
\]
\end{lemma}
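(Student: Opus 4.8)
The plan is to read Lemma \ref{lem:Kis} as the inhomogeneous (retarded) counterpart of the homogeneous local smoothing estimate in Lemma \ref{lem:smooth}: the free group gains half a derivative in the $x_j$-variable, and one expects the Duhamel operator $f\mapsto \int_0^t e^{i(t-s)\Delta}f(\cdot,s)\,ds$ to inherit the same gain. Accordingly, I would deduce the estimate from Lemma \ref{lem:smooth} by a duality ($TT^*$) argument to treat the \emph{global} (full-line) integral, followed by the Christ-Kiselev lemma to descend to the \emph{retarded} integral $\int_0^t$. Throughout, write $L^2_{\hat\jmath,t}$ for the $L^2$-norm in the variables $(x_1,\dots,x_{j-1},x_{j+1},\dots,x_n,t)$.

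First I would record the adjoint of Lemma \ref{lem:smooth}. Since $D^{1/2}_{x_j}$ is a self-adjoint Fourier multiplier that commutes with $e^{it\Delta}$, and $(e^{it\Delta})^*=e^{-it\Delta}$, the homogeneous bound $\|D^{1/2}_{x_j}e^{it\Delta}h\|_{L^\infty_{x_j}L^2_{\hat\jmath,t}}\lesssim \|h\|_{L^2(\R^n)}$ dualizes, using that $L^1_{x_j}L^2_{\hat\jmath,t}$ embeds in the dual of $L^\infty_{x_j}L^2_{\hat\jmath,t}$, to
\[
\Big\|\int_\R e^{-is\Delta}D^{1/2}_{x_j}g(\cdot,s)\,ds\Big\|_{L^2(\R^n)}\lesssim \|g\|_{L^1_{x_j}L^2_{\hat\jmath,t}}.
\]
For the global Duhamel operator I would then commute $D^{1/2}_{x_j}$ inside, factor the group as $e^{i(t-s)\Delta}=e^{it\Delta}e^{-is\Delta}$, and use that $e^{it\Delta}$ is unitary on $L^2(\R^n)$ for each fixed $t$. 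Pairing the resulting spatial function against $h\in L^2(\R^n)$ and applying H\"older in the mixed norm (in $L^1_{x_j}$ against $L^\infty_{x_j}$, and in $L^2_{\hat\jmath,t}$) reduces matters to $\|D^{1/2}_{x_j}e^{is\Delta}h\|_{L^\infty_{x_j}L^2_{\hat\jmath,t}}$, which is controlled by Lemma \ref{lem:smooth}. The bookkeeping of the two half-derivatives is organized by the identity $|D_{x_j}|=\mathcal{H}_j\,\partial_{x_j}$ (equivalently $|\xi_j|=(-i\,\mathrm{sign}\,\xi_j)(i\xi_j)$), which is precisely what moves one factor of $\mathcal{H}_j$ onto $f$; this is the structural reason a Hilbert transform, rather than $f$ itself, appears on the right-hand side. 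The outcome is the global estimate
\[
\Big\|D^{1/2}_{x_j}\int_\R e^{i(t-s)\Delta}f(\cdot,s)\,ds\Big\|_{L^\infty(\R;L^2(\R^n))}\lesssim \|\mathcal{H}_j f\|_{L^1_{x_j}L^2_{\hat\jmath,t}}.
\]

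Finally I would invoke the Christ-Kiselev lemma \cite{CK} to replace $\int_\R$ by the retarded integral $\int_0^t$, which is legitimate here because the output time-exponent is $\infty$ while the input carries a finite (indeed $L^2$) integrability in $t$, so the required strict ordering of exponents holds. The main obstacle, and the step deserving the most care, is exactly this passage to the retarded integral in the mixed-norm setting: the time variable is bundled together with $x_{\hat\jmath}$ inside a single $L^2$, so one must first recast the norm (e.g.\ by Minkowski's integral inequality) into a form $L^p_t(X)$ with $p<\infty$ to which Christ-Kiselev applies, and then track the Hilbert transform cleanly through both the duality and the truncation. A secondary technical point is the non-reflexive $L^\infty_{x_j}$ endpoint in the duality, handled in the standard way by testing against $L^1_{x_j}$.
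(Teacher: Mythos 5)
Your proposal follows precisely the route the paper itself prescribes for this lemma --- it is presented only as ``a direct consequence of Lemma \ref{lem:smooth}, a duality argument and the Christ--Kiselev lemma,'' with all details deferred to \cite[Chapter 4]{LP} --- so your outline (dualize the homogeneous local smoothing estimate, factor $e^{i(t-s)\Delta}=e^{it\Delta}e^{-is\Delta}$ and use unitarity to get the full-line inhomogeneous bound in $L^\infty_t L^2_x$, then Christ--Kiselev to pass to the retarded integral, with the Hilbert transform entering through the sign of $\xi_j$ in the factorization $|D_{x_j}|=\mathcal H_j\partial_{x_j}$) is essentially the same argument. One small caution on your suggested patch for the delicate step: Minkowski's integral inequality gives $\|f\|_{L^1_{x_j}L^2_{\hat\jmath,t}}\le \|f\|_{L^2_t L^1_{x_j}L^2_{\hat\jmath}}$, so recasting the input norm with $t$ outermost produces a \emph{weaker} conclusion than the stated one, and the mixed-norm Christ--Kiselev step must instead be handled as in \cite[Chapter 4]{LP}.
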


\begin{proof}[Proof of Proposition \ref{prop:keybis}] 
The proof is similar to that of Proposition \ref{prop:key} and hence, we 
only sketch the main differences. 

First consider the case of  data $u_0\in H^s(\R^n)$
which is small, so that all the norms involved are indeed \lq\lq small". 
We want to show that the integral term $I$ in Duhamel's formula is one order smoother in the Sobolev scale $C([0,T];H^s(\R^n))$ 
than the the free propagation $e^{it\Delta} u_0.$ To this end, apply Lemma \ref{lem:Kis} together with the commutator estimate in 
\cite[Theorem A.13]{KPV93} to write
\begin{equation}\label{eq:L3}
\begin{aligned}
&\ \big\|\mathcal H_j D^{s+1}_{x_j} I \big\|_{L^\infty([0,T];L^2(\R^n))} \\ 
&\hspace{.8cm} \lesssim 
\big\| D^{s+1/2}_{x_j}(|u|^{2k}u) \big\|_{L^1_{x_j}(\R;L^2_{x_1\cdot\cdot x_{j-1}x_{j+1}\cdot\cdot x_n t}(\R^{n-1}\times[0,T] ))}\\ 
& \hspace{.8cm} \lesssim\Big (\|u\big\|^{2k}_{L^{2k}_{x_j}(\R;L^2_{x_1\cdot\cdot x_{j-1}x_{j+1}\cdot\cdot x_n t}(\R^{n-1}\times[0,T] ))} \\
&\hspace{1.3cm} \times \|D_{x_j}^{s+1/2}u\|_{L^{\infty}_{x_j}(\R;L^2_{x_1\cdot\cdot x_{j-1}x_{j+1}\cdot\cdot x_n t}(\R^{n-1}\times[0,T] ))}+R\Big).
\end{aligned}
\end{equation}
To estimate the  two explicit quantities on the right-hand side of the last inequality,  
one uses arguments similar to those given in the proof of Proposition \ref{prop:key}. 
The estimates for the remainder terms represented by $R$ then follow by interpolation 
of the previous estimates. Since the terms on the right-hand side of \eqref{eq:L3} are 
quadratic and each factor is small, one can close the estimate and get the desired result, but only provided that $u_0$ is sufficiently small.

For data $u_0\in H^s(\R^n)$ of arbitrary size one gives up $\eps$-amount of spatial
smoothing for a little temporal smoothing, thereby obtaining the 
 factor  $T^{\delta(\epsilon)}$, $\delta(\eps)>0$  on the  right-hand side.  The 
right-hand side of the estimate then has lower homogeneity than the left side 
and the proof proceeds.
\end{proof}


\subsection{Extension to the case of non-elliptic Schr\"odinger equations} The results above extend to the case of non-elliptic, non-degenerate, nonlinear Schr\"odinger equations of the form
\begin{equation}\label{eq:hypNLS}
i\partial_t u+\Delta_{\rm H} u\pm |u|^p u=0,\quad u\big|_{t=0}=u_0(x),
\end{equation}
where 
\[
\Delta_{\rm H} := \partial^2_{x_1} + \dots  \partial^2_{x_j} -  \partial^2_{x_{j+1}}\dots -  \partial^2_{x_n}.
\]
\begin{proposition}\label{prop:hypNLS}
The result of Theorem \ref{thm:main} also holds  for the initial-value 
problem delineated in  \eqref{eq:hypNLS} .
\end{proposition}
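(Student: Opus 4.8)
The plan is to show that the proof of Theorem \ref{thm:main} carries over to the non-elliptic equation \eqref{eq:hypNLS} essentially verbatim, since the only properties of the Laplacian that were actually invoked are dispersion-theoretic facts that hold equally well for $\Delta_{\rm H}$. The structure of the original argument splits the construction into two ingredients: first, that the free linear propagator applied to the explicit data \eqref{eq:IC} exhibits dispersive blow up; and second, that the Duhamel integral term $I$ is half a derivative smoother than the linear term, hence continuous. I would verify that each ingredient survives the sign changes in the symbol.

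For the linear part, let $S_{\rm H}(t) = e^{it\Delta_{\rm H}}$ denote the non-elliptic group, whose kernel is again an explicit Gaussian-type oscillatory integral, now of the form $(4i\pi t)^{-n/2}\exp\!\big(i\sum_{\ell=1}^j \frac{(x_\ell-y_\ell)^2}{4t} - i\sum_{\ell=j+1}^n \frac{(x_\ell-y_\ell)^2}{4t}\big)$. First I would record that evaluating this propagator on data of the form $e^{-i\alpha|x-q|^2}a(x)$ still stationary-phase-concentrates at the focal time $t_*=\tfrac{1}{4\alpha}$: the quadratic phase in the data is designed to cancel the quadratic kernel phase, and since $\Delta_{\rm H}$ is merely a reflection of some coordinates, one simply chooses the data phase with matching signs, $u_0(x)=e^{-i\alpha\sum_{\ell\le j}(x_\ell-q_\ell)^2 + i\alpha\sum_{\ell>j}(x_\ell-q_\ell)^2}(1+|x|^2)^{-m}$, so that at $t_*$ the evaluation reduces to $\int (1+|y|^2)^{-m}\,dy=+\infty$ exactly as in Lemma \ref{lem:IC}. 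The Sobolev regularity of this modified datum is identical to that in Lemma \ref{lem:linear}, since multiplying by a unimodular quadratic-phase factor does not change membership in $H^s$ (the bound $|\mathcal D^s e^{\pm i|x|^2}|\lesssim 1+|x|^s$ from \cite{NP} is sign-insensitive).

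For the nonlinear part, the key observation is that every smoothing estimate used in the proof of Proposition \ref{prop:key} is \emph{coordinatewise} in nature and depends only on the one-dimensional dispersion along each axis. The local smoothing estimate of Lemma \ref{lem:smooth}, the Schr\"odinger-maximal-function bounds of Lemma \ref{lem:B}, and the Strichartz estimates of Lemma \ref{lem:Strich} all hold for $\Delta_{\rm H}$ with identical constants: the Strichartz and maximal-function estimates follow from the dispersive decay $\|S_{\rm H}(t)\|_{L^1\to L^\infty}\lesssim |t|^{-n/2}$, which is unchanged because $|\det(\pm I)|=1$; and the local smoothing estimate, being a statement about the restriction-of-Fourier-support gain of half a derivative in a single variable $x_j$, is invariant under flipping signs of the symbol. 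I would therefore replay Steps 1--4 of the proof of Proposition \ref{prop:key} with $e^{it\Delta}$ replaced by $S_{\rm H}(t)$, noting that Proposition \ref{prop:CW} (local well-posedness in $W^T_{s,n}$) also holds for \eqref{eq:hypNLS} since its proof is a fixed-point argument on the Strichartz estimates, which persist. This yields $I\in C([0,T];H^{s+1/2}(\R^n))\subset C(\R^n\times[0,T])\cap L^\infty$, so that \eqref{eq:hypNLS} inherits dispersive blow up at $(x_*,t_*)$ from its linear part.

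The main obstacle, such as it is, is purely bookkeeping: one must confirm that no step secretly used positivity of the phase $|\xi|^2$ — for instance, the stationary-phase computation of the focal concentration, and the Avron-Herbst-style or Mehler-style explicit formulas, are the places where the sign structure enters. Since the construction only ever uses the Gaussian kernel's modulus and the fact that a matched quadratic-phase datum cancels the kernel phase, and since all smoothing inequalities are modulus-of-symbol estimates, no genuine new difficulty arises. I would conclude by remarking that the argument in fact applies to any non-degenerate real quadratic form $\sum_\ell \eps_\ell \xi_\ell^2$ with $\eps_\ell\in\{+1,-1\}$, and that the stronger smoothing of Proposition \ref{prop:keybis} likewise extends, via the same substitution, whenever $p=2k$ and $u_0$ is small.
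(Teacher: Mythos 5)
Your overall strategy is the same as the paper's: take data whose quadratic phase matches the signature of $\Delta_{\rm H}$ (this is precisely the data \eqref{data-hyp} used in the text), observe that the Strichartz estimates --- and hence the Cazenave--Weissler local well-posedness of Proposition \ref{prop:CW} --- are identical for $e^{it\Delta}$ and $e^{it\Delta_{\rm H}}$, check that the local smoothing of Lemma \ref{lem:smooth} survives the sign flips, and then rerun the proofs of Theorem \ref{thm:main} and Proposition \ref{prop:key}. All of that agrees with the paper, including your remark that the unimodular phase does not affect the $H^s$ regularity of the data and that the argument works for any non-degenerate signature.

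There is, however, one genuine gap: you assert that the Schr\"odinger maximal function bounds of Lemma \ref{lem:B} ``follow from the dispersive decay $\|S_{\rm H}(t)\|_{L^1\to L^\infty}\lesssim|t|^{-n/2}$\dots with identical constants.'' That is not true. Bounds for $\sup_{0\le t\le T}|e^{it\phi(D)}f|$ are not consequences of the fixed-time $L^1\to L^\infty$ decay --- no $TT^*$ or interpolation argument produces a supremum in $t$ from it --- and they are not ``modulus-of-symbol'' estimates either: they are restriction-theoretic and depend on the curvature of the characteristic surface $\{\tau=\phi(\xi)\}$, which for $\Delta_{\rm H}$ is a hyperbolic rather than elliptic paraboloid. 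The non-elliptic maximal estimate is a genuinely separate theorem. The paper supplies it by invoking Rogers--Vargas--Vega \cite[Theorem 2.6]{RVV}, i.e.\ the inequality \eqref{max-non-ellip}, and noting that the same argument gives $\big\|\sup_{0\le t\le T}|e^{it\Delta_{\rm H}}f|\big\|_{L^4(\R^n)}\le C_T\big\|D_x^{n/4}f\big\|_{L^2(\R^n)}$; this is exactly the input needed in Steps 1 and 3 of the proof of Proposition \ref{prop:key} (the estimates \eqref{ouf} and the bound on \eqref{eq:A}). Without this, or some substitute for it, your replay of those steps does not close. With that one citation added, your proof coincides with the paper's.
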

\begin{proof} Remark first that for initial data  of the form 
\begin{equation}
\label{data-hyp}
\widetilde u_0(x)=
\frac{e^{-i\alpha\big((x_1-q_1)^2+\dots+(x_j-q_j)^2-(x_{j+1}-q_{j+1})^2-\dots-(x_n-q_n)^2\big)}}{(1+|x|^2)^m},
\end{equation}
with $n/4<m\leq n/2$, the solution of the  initial-value problem associated to 
the linearization of \eqref{eq:hypNLS} around the rest state, 
i.e., \[u(x,t) =e^{it\Delta_H}\widetilde u_0(x),\] 
satisfies the conclusions of Lemma \ref{lem:IC}, and in particular, blows up 
 at the point $ \widetilde q=(q_1,\dots,q_j,-q_{j+1},\dots,-q_n)$.

Next,  notice  that the the local well-posedness result stated in Proposition \ref{prop:CW}, is solely 
based on Strichartz estimates, which are exactly the same for the two groups $\{e^{it \Delta} \}_{t\in \R}$ and 
$\{e^{it {\Delta_{\rm H}}} \}_{t\in \R}$ ({\it cf.} \cite{GS}). 
In other words, Lemma \ref{lem:Strich} also holds in the  non-elliptic case.
 Together with Sobolev embeddings, this yields a unique solution $u\in W_{s,n}^{T}$ to \eqref{eq:hypNLS}, 
by the same arguments as in \cite{CW}. Furthermore, the local smoothing estimates stated in Lemma \ref{lem:smooth} 
carry over to the  non-elliptic situation.   

Finally, for the boundedness of the associated maximal function 
(see Lemma \ref{lem:B}), we point to the 
inequality
\begin{equation}
\label{max-non-ellip}
\Big \| \sup_{0\leq t\leq T} \big |e^{it(\partial^2_{x_1} - 
\partial^2_{x_2})}f \big| \,                              \Big \|_{L^4(\R^n)} \leq C_T \Big\| D_x^{1/2} f \Big\|_{L^2(\R^2)},
\end{equation}
proved in \cite[Theorem 2.6]{RVV} and observe that the same argument used there
 to establish \eqref{max-non-ellip} shows that
\[
\Big \| \sup_{0\leq t\leq T}  \big|e^{it \Delta_{\rm H} }f \big| \, \Big\|_{L^4(\R^n)} \leq 
C_T \Big\| D_x^{n/4} f  \Big\|_{L^2(\R^n)},
\]
which is the desired estimate. The result then follows along the same lines as given in the proof of Theorem \ref{thm:main}.
\end{proof}

An important consequence of this is the possibility of dispersive blow up for the Davey-Stewartson system in the \lq\lq elliptic/elliptic" or \lq\lq hyperbolic/elliptic" cases (see \cite{GS} for more details 
about  these systems, in particular for theory of local well-posedness).  This system arose originally 
as an approximate description of surface gravity-capillary waves in shallow water, but has 
other applications as well.   

\begin{corollary}
For  $\alpha$ and $\beta$ real and non-zero, consider the Davey-Stewartson system
\begin{equation}
\label{DS-ivp}
\left \{ 
\begin{aligned} 
&\ i\partial_t u\pm \partial^2_{x_1}u + \partial^2_{x_2} u= \alpha |u|^2 u + \beta u \partial_{x_1}\phi ,\;\;\;\;\;\;\;\;\;x=(x_1,x_2)\in \R^2, \\
& \ \Delta \phi = \partial_{x_1} |u|^2.
\end{aligned}
\right.
\end{equation}
Then there exist initial values $u_0\in H^s(\R^2)\cap L^{\infty}(\R^2)\cap C^{\infty}(\R^2)$ with $s\in (\frac{1}{2},1]$ 
such that the solution $u$ of \eqref{DS-ivp} with the initial condition $u(x,0)=u_0(x)$ exhibits dispersive blow up.
\end{corollary}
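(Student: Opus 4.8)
The plan is to reduce the Davey--Stewartson system \eqref{DS-ivp} to a single nonlocal nonlinear Schr\"odinger equation and then to invoke the machinery already assembled for Theorem \ref{thm:main} and Proposition \ref{prop:hypNLS}. First I would solve the elliptic constraint $\Delta\phi=\partial_{x_1}|u|^2$ in Fourier variables, obtaining $\partial_{x_1}\phi=\partial_{x_1}^2\Delta^{-1}|u|^2=:R|u|^2$, where $R$ is the Fourier multiplier with symbol $\xi_1^2/|\xi|^2$. Since this symbol is smooth away from the origin and homogeneous of degree zero, $R$ is an operator of order zero: it is bounded on every $L^q(\R^2)$ with $1<q<\infty$, it commutes with the fractional derivatives $D^{\alpha}_{x_j}$ and with $\partial_{x_j}$ (all being Fourier multipliers), and — via H\"ormander--Mikhlin theory in the vector-valued setting — it is bounded on the anisotropic mixed Lebesgue spaces appearing below. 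With this reduction, \eqref{DS-ivp} becomes
\[
i\partial_t u+\Delta_\pm u=\alpha|u|^2u+\beta\,u\,R|u|^2,\qquad \Delta_\pm:=\pm\partial_{x_1}^2+\partial_{x_2}^2,
\]
which is of the same type as \eqref{eq:NLS} (for the $+$ sign) or \eqref{eq:hypNLS} (for the $-$ sign), with the cubic nonlinearity supplemented by a zeroth-order nonlocal cubic term.

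Next I would fix the data and the Duhamel representation. Taking $u_0=\delta\,v_0$, with $v_0$ of the form in Lemma \ref{lem:IC} in the elliptic case, or of the form \eqref{data-hyp} in the non-elliptic case, with $m=n/2=1$ and $\delta>0$ small, guarantees that the free evolution $e^{it\Delta_\pm}u_0$ exhibits dispersive blow up at a prescribed $(x_*,t_*)$ and that $u_0\in H^s(\R^2)\cap L^\infty(\R^2)\cap C^\infty(\R^2)$ for $s$ in the stated range. Because local well-posedness for \eqref{DS-ivp} rests solely on Strichartz estimates — identical for $\{e^{it\Delta_+}\}$ and $\{e^{it\Delta_-}\}$ (see \cite{GS} and Lemma \ref{lem:Strich}) — and the added term $\beta u R|u|^2$ is, like $\alpha|u|^2u$, a smooth cubic nonlinearity (here $p=2=2k$, $k=1$), the argument of \cite{CW} behind Proposition \ref{prop:CW} yields a unique solution $u\in W^T_{s,2}$, with $T>t_*$ once $\delta$ is small. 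Duhamel's formula then gives
\[
u(x,t)=e^{it\Delta_\pm}u_0(x)-i\int_0^te^{i(t-s)\Delta_\pm}\big(\alpha|u|^2u+\beta u R|u|^2\big)(s)\,ds=:e^{it\Delta_\pm}u_0(x)-i\,I(x,t),
\]
and it remains to show that $I$ is continuous and locally bounded, so that the blow up of the first term survives.

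The core step is to establish $I\in C([0,T];H^{s+1/2}(\R^2))$ by the smoothing estimate of Proposition \ref{prop:key}. The local cubic term $\alpha|u|^2u$ is treated verbatim as in that proof (case $p=2k$), using the local smoothing bound of Lemma \ref{lem:smooth}, the maximal-function bound of Lemma \ref{lem:B} (valid in the non-elliptic case by the argument of Proposition \ref{prop:hypNLS}), the Leibniz-type inequality \eqref{inequality}, and the fractional Leibniz rule of \cite{KP}. For the nonlocal term $\beta u R|u|^2$ I would run the identical chain: when $D^{s+1/2}_{x_j}$ is distributed over the product $u\cdot R|u|^2$ via \eqref{inequality} and \cite{KP}, the multiplier $R$ commutes with every fractional and integer derivative occurring, and its boundedness on the relevant $L^q$ and mixed-norm spaces allows one to replace factors $R|u|^2$, $R\,D^{s+1/2}_{x_j}|u|^2$, and the like by $|u|^2$, $D^{s+1/2}_{x_j}|u|^2$, up to harmless constants. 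The resulting estimates then have exactly the structure of \eqref{KPV}--\eqref{enfin} and close in the same manner, whence $I$ is continuous and bounded and $u$ blows up at $(x_*,t_*)$.

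The one genuinely new point — and the step I expect to demand the most care — is verifying that the zeroth-order multiplier $R$ is bounded on the anisotropic mixed Lebesgue spaces $L^{p}_{x_j}(\R;L^{q}_{\cdots,t})$ used throughout the proof of Proposition \ref{prop:key}, and that it commutes cleanly with the one-dimensional operators $D^{\alpha}_{x_j}$ inside those norms. This follows from Mikhlin-type multiplier theorems in the mixed-norm (vector-valued) setting, but one must check that the symbol $\xi_1^2/|\xi|^2$ satisfies the requisite H\"ormander conditions uniformly, so that the nonlocal term incurs no derivative loss. Once this is secured, the balance of the argument is a routine transcription of the proofs of Theorem \ref{thm:main} and Proposition \ref{prop:hypNLS}, yielding dispersive blow up for the Davey--Stewartson system in both the elliptic/elliptic and hyperbolic/elliptic cases.
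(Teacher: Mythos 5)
Your proposal is correct and follows essentially the same route as the paper: both reduce the system to a single cubic NLS with the extra nonlocal term $u\,\partial_{x_1}^2(-\Delta)^{-1}(|u|^2)=-u\,R_1R_1(|u|^2)$ (your multiplier with symbol $\xi_1^2/|\xi|^2$ is exactly $-R_1R_1$), and then argue that the zeroth-order boundedness of this multiplier lets the local well-posedness of Proposition \ref{prop:CW} and the smoothing argument of Proposition \ref{prop:key} go through unchanged, so that Theorem \ref{thm:main} extends. The only difference is one of detail: you explicitly flag the boundedness of the multiplier on the anisotropic mixed-norm spaces as the point requiring care, whereas the paper disposes of this with a one-line appeal to the $L^p$-continuity of the Riesz transform.
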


\begin{proof}
Rewrite the system \eqref{DS-ivp} as the single equation
\begin{equation*}
i\partial_t u\pm \partial^2_{x_1}u + \partial^2_{x_2} u= \alpha |u|^2 u - \beta u \partial_{x_1}^2(-\Delta)^{-1}(|u|^2),
\end{equation*}
in the usual way.   The latter equation has the  equivalent form
\begin{equation*}
i\partial_t u\pm \partial^2_{x_1}u + \partial^2_{x_2} u= \alpha |u|^2 u - \beta u R_1R_1 (|u|^2),
\end{equation*}
where
\begin{equation*}
R_1 f(x_1,x_2):= \mathcal F^{-1}\left(i\frac{\xi_1}{|\xi|}\widehat f(\xi_1,\xi_2)\right)(x_1,x_2)
\end{equation*}
 denotes the $1$-Riesz transform in $\R^2$

The $L^p$-continuity of the Riesz transform, 
implies that the result in Proposition \ref{prop:CW} and the argument entailed in the proof of
 Proposition \ref{prop:key} still hold. Hence,  Theorem 
\ref{thm:main} extends to solutions of the system \eqref{DS-ivp}. \end{proof}


\section{Dispersive blow up in the Gross-Pitaevskii  equation}\label{sec:GP}

In this section, the discussion is moved to the initial-value problem 
\begin{equation} \label{eq:GP} 
i \partial_t \psi + \Delta \psi+ (1-|\psi|^2)\psi=0, \quad \psi\big|_{t=0} = \psi_0(x)
\end{equation}
for the Gross-Pitaevskii  equation.  
Here $(x,t) \in \R^n\times \R$ and $\psi$ is subject to the boundary condition 
\begin{equation}\label{eq:bc}
\lim_{|x|\to \infty} \psi(x,t) = 1, \quad \text{for all $t\in \R$.}
\end{equation}
The Gross-Pitaevskii equation arises, for example, in the description of Bose-Einstein condensates, superfluid helium $\text{He}^2$ and, 
in one spatial dimension, as a model for light propagation in a fiber 
optics cable (see for instance the survey article \cite{BGS3} and other articles in the same volume). 
\begin{remark} An important conserved quantity of \eqref{eq:GP} is the Ginzburg-Landau energy, defined by
\[
E(\psi) = \frac{1}{2} \int_{\R^n} |\nabla \psi|^2 \, dx + \frac{1}{4} \int_{\R^n} (1 - |\psi|^2)^2 \, dx.
\]
This invariant points to a natural energy space for the Gross-Pitaevskii equation, namely 
\[
\boE(\R^n) = \{ \psi \in H^1_{\text{loc}}(\R^n),  E(\psi) < + \infty \}
\]
 (see \cite{Ge1, Ge2} for more details).
\end{remark}


\subsection{A reformulation of the Gross-Pitaevskii Equation}

Due to the non-zero boundary condition \eqref{eq:bc} at infinity, the 
ideas that worked in the earlier sections do not apply directly to show 
dispersive blow up.    
To prove dispersive blow up for the Gross-Pitaevskii equation, make the change 
 \[\psi(t,x) =1+u(t,x),\] 
of the dependent variable so that $u\in L^2(\R^n)$ describes the deviation from the steady state. In terms of $u$, the 
Gross-Pitaevskii equation \eqref{eq:GP} becomes
\begin{equation}\label{bife}
i\partial_t u +\Delta u-2 \re \,u=F(u) , \quad  u \big|_{t=0} = \psi_0(x)-1,
\end{equation} 
where 
\[
F(u) = u^2+2|u|^2+|u|^2u.
\]
Even for this  transformed initial-value problem \eqref{bife}, dispersive blow up
 does not fall directly to the general lines of 
argument proposed earlier.  This is due to the appearance of 
the slightly odd, $\R$-linear term $-2 \re\,  u$, which at least
 in principle might cancel out the effect of dispersive blow up stemming from the Laplacian. 

This obstacle will be surmounted by using the further reformulation  developed 
by Gustafson, Nakanishi and Tsai in \cite{GNT1, GNT2}. 
Following their lead,  introduce the 
Fourier multipliers
\begin{equation}  \label{operators}
A:=  \sqrt{-\Delta(2-\Delta)} \quad {\rm and} \quad  B:= \sqrt{-\Delta(2-\Delta)^{-1}}\, .
\end{equation}
These operators satisfy $A = -\Delta B^{-1} = (2- \Delta)B=\sqrt{-\Delta(2-\Delta)}$. Next,  define the $\R$-linear operator
\[
\Upsilon u := B\, \re\, u + i \,\im \, u\equiv B u_1 + i u_2, 
\]
where $u = u_1 + iu_2$.   
One checks (see \cite{GNT1, GNT2}) that the left-hand side of the equation in \eqref{bife} 
can be written in the form
\begin{equation}\label{eq:identity}
i\partial_t u +\Delta u-2 \re \,u =  \Upsilon (i\partial_t  - A ) \Upsilon^{-1}u.
\end{equation}
Denote by $v$ the function
\[
v:=\Upsilon^{-1}u \equiv \Upsilon^{-1}(u_1+iu_2)=B^{-1}u_1+iu_2
\] 
and rewrite \eqref{bife} as
\begin{equation}\label{eq:reGP}
i \partial_t v - A v = \Upsilon^{-1}F(u), \quad v\big|_{t=0} = \Upsilon^{-1} (\psi_0-1).
\end{equation}
The associated free evolution is 
\[
w(x,t)=e^{-itA} v_0 (x) := \frac{1}{(2\pi)^n} \int_{\R^n} e^{it\sqrt{|\xi|^2(|\xi|^2+2)}} \widehat u_0(\xi) e^{i \xi\cdot x} \, d\xi,
\]
which can be represented as follows.
\begin{lemma}\label{lem:decomp}
For any $f\in L^2(\R^n)$ and any $t\not =0$, the group $\{e^{-itA}\}_{t\in \R}$ has the representation
\[e^{-itA} f(x) = (G(\cdot , t) \ast f)(x),\] where the kernel is $G =  G_1 + G_2$ with
\[
G_1(x,t)   =\frac{e^{it}}{(4\pi it)^{n/2}}\, e^{\frac{i|x|^2}{4t}}\quad and \quad G_2(x,t) := \frac{e^{it} }{(2\pi)^n} \int_{\R^n} \kappa(\xi, t) e^{it |\xi|^2} e^{ix\cdot \xi}d\xi.
\]
Here, the convolution is with respect to the spatial variable over $\R^n$ 
and the kernel  $\kappa $ is 
\[
\kappa(\xi, t)=2i\sin \Big(\frac{t}{2}r(\xi)\Big)e^{i\frac{t}{2} r(\xi)}
\]
with 
\[
r(\xi)= \frac{-2|\xi|^2}{\left(\sqrt{|\xi|^2(|\xi|^2+2)}+|\xi|^2\right)^2}\sim O(|\xi|^{-2}), \ \text{as $|\xi|\to \infty$.}
\]
Moreover, $r$ lies in $C_{\rm b}(\R^n)$ and is smooth away from the origin.
\end{lemma}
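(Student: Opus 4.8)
The plan is to read the representation directly off the Fourier symbol of the propagator and to reduce the whole statement to one elementary algebraic identity for the phase. Since $A$ is the Fourier multiplier with symbol $\phi(\xi):=\sqrt{|\xi|^2(|\xi|^2+2)}$, the free evolution $w=e^{-itA}v_0$ introduced just above acts by multiplication of $\widehat f$ by $e^{it\phi(\xi)}$, so that $e^{-itA}f = G(\cdot,t)\ast f$ with $G=\mathcal F^{-1}\big(e^{it\phi(\cdot)}\big)$. I would establish this first as an identity of Fourier multipliers on a dense class (say Schwartz data) and extend it by density, using that $e^{-itA}$ is unitary on $L^2(\R^n)$; the entire content is then the splitting of $e^{it\phi}$ into a leading Schr\"odinger-type factor and a smoother remainder.

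The key step is the algebraic identity
\[
\phi(\xi) = |\xi|^2 + 1 + r(\xi), \qquad r(\xi) = \frac{-2|\xi|^2}{\big(\sqrt{|\xi|^2(|\xi|^2+2)}+|\xi|^2\big)^2},
\]
which I would obtain by rationalizing $\phi(\xi)-|\xi|^2 = \frac{2|\xi|^2}{\phi(\xi)+|\xi|^2}$ and subtracting $1$. With $r$ so identified, one factors
\[
e^{it\phi(\xi)} = e^{it}\,e^{it|\xi|^2}\,e^{itr(\xi)} = e^{it}\,e^{it|\xi|^2}\big(1+\kappa(\xi,t)\big),
\]
where the half-angle rewriting $e^{i\theta}-1 = 2i\sin(\theta/2)e^{i\theta/2}$ with $\theta=tr(\xi)$ gives exactly $\kappa(\xi,t)=e^{itr(\xi)}-1=2i\sin\!\big(\tfrac{t}{2}r(\xi)\big)e^{i\frac{t}{2}r(\xi)}$. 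Inverting the Fourier transform term by term yields $G=G_1+G_2$: the contribution of the constant term $1$ is $e^{it}$ times the free Schr\"odinger kernel $\mathcal F^{-1}\big(e^{it|\xi|^2}\big)$, which a standard Fresnel integral evaluates to the Gaussian form recorded in $G_1$, while the contribution of $\kappa$ is, by definition, the oscillatory integral $G_2$.

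It then remains to record the qualitative properties of $r$. A short computation shows $r(\xi)\to -1$ as $\xi\to 0$ and, since $\phi(\xi)\sim|\xi|^2+1$ for large frequencies, $r(\xi)\sim -\tfrac12|\xi|^{-2}=O(|\xi|^{-2})$ as $|\xi|\to\infty$; in particular $r$ is bounded. Away from the origin $\phi$ is a smooth, strictly positive function of $\xi$, so $r$ is smooth there; at the origin the factor $\sqrt{|\xi|^2}\sim\sqrt2\,|\xi|$ inside $\phi$ obstructs smoothness, which is precisely why one claims only $r\in C_{\rm b}(\R^n)$ together with smoothness on $\R^n\setminus\{0\}$.

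The algebra being elementary, there is no deep obstacle; the only genuinely delicate points are interpretive. The integrals defining $G_1$ and $G_2$ are not absolutely convergent, so the representation must be read as an identity of multipliers (tested against Schwartz functions, or regularized by replacing $t$ with $t-i\eps$ and letting $\eps\downarrow 0$), and the explicit Gaussian form of $G_1$ is exactly such a Fresnel limit. The second subtlety is the low-frequency behaviour: $\phi$ is merely Lipschitz at $\xi=0$, so one must check that this mild singularity is harmless for the bounded remainder $\kappa$—which it is, since $|\kappa(\xi,t)|\le\min\{2,|t|\,|r(\xi)|\}$—and that it is already present, and integrable, in the Schr\"odinger part carried by $G_1$. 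Establishing this limiting interpretation cleanly is the part I expect to require the most care.
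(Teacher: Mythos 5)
Your proposal is correct and follows essentially the same route as the paper's proof: both decompose the symbol via the algebraic identity $\sqrt{|\xi|^2(|\xi|^2+2)}=|\xi|^2+1+r(\xi)$ obtained by rationalizing, factor the propagator as $e^{it}e^{it|\xi|^2}(1+\kappa(\xi,t))$ using the half-angle identity $e^{i\theta}-1=2i\sin(\theta/2)e^{i\theta/2}$, and invert term by term. The additional care you take with the distributional (Fresnel-limit) interpretation of the non-absolutely-convergent integrals and with the Lipschitz behaviour of the symbol at $\xi=0$ is sound, though the paper passes over these points silently.
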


Note that $G_1$ is the usual Schr\"odinger group multiplied by $e^{it}$.  Remark also that, as $|\xi|\to \infty$, $\kappa(\cdot, t)$ decays to zero like $|\xi| ^{-2}$, 
uniformly on compact time-intervals. 

\begin{proof} 
The convolution kernel    $G$ is
\[G(x,t)=\mathcal F^{-1}\left(e^{it \sqrt{(|\xi|^2(|\xi|^2+2)}}\right)(x,t).\]
Observe that 
\[
\sqrt{(|\xi|^2(|\xi|^2+2)}=|\xi|^2+a(\xi),
\]
where 
\[
a(\xi)=\frac{2|\xi|^2}{\sqrt{(|\xi|^2(|\xi|^2+2)}+|\xi|^2}=1-\frac{2|\xi|^2}{\left(\sqrt{(|\xi|^2(|\xi|^2+2)}+|\xi|^2\right)^2}=1+r(\xi).\]
 Consequently, it follows that
 \[
 e^{it \sqrt{(|\xi|^2(|\xi|^2+2)}}=e^{it|\xi|^2}e^{it}e^{itr(\xi)}=e^{it}e^{it|\xi|^2}(1+f_t(|\xi|)),
 \]
where
\[
f_t(|\xi|)=2i\sin \Big(\frac{t}{2}r(\xi)\Big)e^{it\frac {r(\xi)}{2}}\]
is continuous, smooth on $\R^n$ and
decays to zero like $|\xi| ^{-2}$, as $|\xi|\to \infty$, uniformly on compact time intervals in $(0,\infty)$,  since
$r({|\xi|})$ does so.
This allows  the propagator in Fourier space to be written as
\[
\widehat G(\xi, t)= e^{it\sqrt{|\xi|^2(|\xi|^2+2)}}=e^{it}e^{it|\xi|^2}(1+\kappa(\xi, t))
\]
with $\kappa$ as above. 
\end{proof}

With this representation in hand, dispersive blow up for the evolutionary group $\{e^{-itA}\}_{t\in \R}$ 
can be established for at least  spatial dimensions less than or equal to three. 

\begin{lemma}\label{lem:DBUv}
Let $n\leq 3$  and suppose given $x_*\in \R^n$, $t_*>0$ and $s \in [0,\frac{n}{2})$.  Then 
there exist initial data $v_0\in C^\infty(\R^n)\cap H^s(\R^n)\cap L^\infty(\R^n)$ such that
\[
w(\cdot, t) = e^{-itA} v_0 \in H^s(\R^n)
\]
exhibits dispersive blow up at $(x_*, t_*)$.
\end{lemma}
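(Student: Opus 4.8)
The plan is to mirror the structure of Lemma~\ref{lem:IC}, using the decomposition $G = G_1 + G_2$ furnished by Lemma~\ref{lem:decomp}. The key observation is that $G_1(x,t) = e^{it}(4\pi i t)^{-n/2} e^{i|x|^2/4t}$ is simply the free Schr\"odinger kernel multiplied by the harmless scalar $e^{it}$, so the convolution $G_1(\cdot,t)\ast v_0$ inherits \emph{exactly} the dispersive blow up mechanism already understood for $e^{it\Delta}$. The remainder $G_2(\cdot,t)\ast v_0$ should be continuous and locally bounded, because its Fourier symbol $\kappa(\xi,t)$ decays like $|\xi|^{-2}$ as $|\xi|\to\infty$; this extra decay is precisely the gain that will make the perturbation harmless.

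First I would choose the initial data. Following Lemma~\ref{lem:IC} and the rescaling used in Theorem~\ref{thm:main}, take
\[
v_0(x) = \frac{e^{-i\alpha|x-q|^2}}{(1+|x|^2)^m}, \qquad \frac{n}{4}<m\le \frac{n}{2},
\]
with $\alpha$ and $q$ adjusted so that the free Schr\"odinger part blows up at the prescribed $(x_*,t_*)$ (the factor $e^{it}$ shifts nothing in the modulus). By Lemma~\ref{lem:linear}, such $v_0$ lies in $C^\infty(\R^n)\cap L^\infty(\R^n)\cap H^s(\R^n)$ for the stated range $s\in[0,\frac{n}{2})$, and since the symbol of $e^{-itA}$ has modulus one, the flow preserves $H^s$, giving $w(\cdot,t)\in H^s(\R^n)$ as claimed. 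Writing $w = G_1(\cdot,t)\ast v_0 + G_2(\cdot,t)\ast v_0$, the first summand is $e^{it}$ times the free Schr\"odinger evolution of $v_0$, which by Lemma~\ref{lem:IC} (after the scaling) blows up at $(x_*,t_*)$ and is continuous elsewhere.

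The main work, and the expected obstacle, is to show that $G_2(\cdot,t)\ast v_0$ is bounded and continuous on $\R^n\times[0,T]$, so that it cannot cancel the singularity coming from $G_1$. I would argue on the Fourier side: since $\widehat{G_2\ast v_0}(\xi,t)=e^{it}\,\kappa(\xi,t)\,e^{it|\xi|^2}\,\widehat v_0(\xi)$ and $\kappa(\xi,t)=O(|\xi|^{-2})$ uniformly for $t$ in compact intervals, the symbol $\kappa(\xi,t)$ effectively acts as a smoothing operator of order two. Hence $G_2(\cdot,t)\ast v_0$ is bounded in $H^{s+2}(\R^n)$ uniformly on $[0,T]$; the restriction $n\le 3$ is exactly what allows the Sobolev embedding $H^{s+2}(\R^n)\hookrightarrow L^\infty(\R^n)$ to close, since one needs $s+2>n/2$, which holds for every $s\ge 0$ once $n\le 3$. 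Continuity in $(x,t)$ then follows from the continuity of $t\mapsto \kappa(\xi,t)\widehat v_0(\xi)$ in $L^2$ together with the uniform $H^{s+2}$ bound, via dominated convergence.

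Combining the three pieces, $w = e^{it}e^{it\Delta}v_0 + (G_2(\cdot,t)\ast v_0)$ is the sum of a term exhibiting dispersive blow up at $(x_*,t_*)$ and a term continuous and locally bounded on $\R^n\times[0,T]$; therefore $w$ itself blows up at $(x_*,t_*)$ and is continuous away from that point, exactly as in assertions (i)--(iii) of Lemma~\ref{lem:IC}. The delicate point deserving care is the uniformity of the $|\xi|^{-2}$ decay of $\kappa$ near $t=0$ and across the compact interval $[0,T]$, since $r(\xi)$ is only \emph{smooth away from the origin}; near $\xi=0$ one must instead use that $\kappa$ is bounded and continuous there, so that the low-frequency contribution is controlled directly by $\|v_0\|_{L^2}$ rather than by the smoothing estimate.
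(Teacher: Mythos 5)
Your argument is correct and follows essentially the same route as the paper: the same decomposition $G=G_1+G_2$ from Lemma~\ref{lem:decomp}, the same choice of initial data, and the same key point that the $O(|\xi|^{-2})$ decay of $\kappa$ renders $G_2(\cdot,t)\ast v_0$ bounded and continuous, with the restriction $n\le 3$ entering for exactly the same arithmetic reason. The only cosmetic difference is the final step: the paper observes that $\widehat G_2(\cdot,t)\,\widehat v_0\in L^1(\R^n)$ (since $\widehat G_2(\cdot,t)\in L^2(\R^n)$ for $n\le 3$) and invokes the Riemann--Lebesgue lemma, whereas you package the same gain as a uniform $H^{s+2}$ bound followed by Sobolev embedding.
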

\begin{proof} Choose $(x_*,t_*) = (0,\frac{1}{4})$ without loss of generality. 
	As in Lemma \ref{lem:linear}, let 
\begin{equation} \label{initdata}
v_0(x)=\frac{e^{-i|x|^2}}{(1+|x|^2)^m}, \quad \text{with  $\frac{n}{4}<m\leq \frac{n}{2}$.}
\end{equation}
Using Lemma \ref{lem:decomp},  it is found that 
\[
e^{-itA} v_0(x) = (G_1(\cdot , t) \ast v_0)(x) +(G_2(\cdot , t) \ast v_0)(x).
\]
In view of the calculations in Section \ref{sec:lin} it is inferred 
that the first term on the right-hand side will exhibit dispersive blow up at $(0,\frac{1}{4})$.
It thus suffices to show that  $G_2(\cdot,t)\ast v_0\in C_{\rm b}( \R^n)$, uniformly on compact time-intervals. 

The kernel $G_2(\cdot ,t)\in L^2(\R^n)$ for $n\leq 3$, due to the decay properties of $r(\xi)$. Since $v_0\in L^2(\R^n)$ by 
construction,  the product $\widehat G_2 (\cdot , t) \widehat v_0\in L^1(\R^n)$, uniformly on compact time-intervals.  The Riemann-Lebesgue lemma then implies the desired result.
\end{proof}

Note that even though $e^{-it A}$ represents the linear evolution operator associated to \eqref{eq:reGP}, it includes part of the nonlinearity of the 
original Gross-Pitaevskii equation \eqref{eq:GP}, as can be seen from \eqref{eq:identity}.


\subsection{Proof of dispersive blow up for Gross-Pitaevskii}

Lemma \ref{lem:DBUv} together with the results of Section \ref{sec:NLS} now allow us to prove 
dispersive blow up for equation \eqref{eq:reGP} and, consequently, also for the original 
Gross-Pitaevskii equation (in physically relevant dimensions $n = 1,2,3$). 
To effect a proof, the following lemma on the  mapping properties of $B$ and its inverse will be used.
\begin{lemma}\label{mapping}
The operator $B$ in \eqref{operators} may be written in the form
\[B={\rm Id}+B_1, \quad B^{-1}={\rm Id} +B_2,\] 
where $B_1$, $B_2\in \mathcal L(H^s(\R^n),H^{s+2}(\R^n))$, for all $s\in \R.$ 
\end{lemma}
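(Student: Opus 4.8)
The plan is to realize both $B$ and $B^{-1}$ as Fourier multipliers and to isolate, in each, the part that gains two derivatives. On the Fourier side, $B=\sqrt{-\Delta(2-\Delta)^{-1}}$ acts as multiplication by $b(\xi)=|\xi|/\sqrt{|\xi|^2+2}$, and $B^{-1}$ by $b(\xi)^{-1}=\sqrt{|\xi|^2+2}/|\xi|$. I would therefore \emph{define} $B_1:=B-\mathrm{Id}$ and $B_2:=B^{-1}-\mathrm{Id}$, whose symbols are $b_1(\xi)=b(\xi)-1$ and $b_2(\xi)=b(\xi)^{-1}-1$; the whole of the lemma reduces to showing that each of these symbols decays like $|\xi|^{-2}$, since on the $L^2$-based scale a multiplier $m$ belongs to $\mathcal L(H^s,H^{s+2})$ for every $s$ precisely when $(1+|\xi|^2)\,m(\xi)\in L^\infty(\R^n)$, by Plancherel's theorem.

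The computation I would carry out is a rationalization by the conjugate, which yields
\[
b_1(\xi)=\frac{-2}{\sqrt{|\xi|^2+2}\,\bigl(|\xi|+\sqrt{|\xi|^2+2}\bigr)},\qquad b_2(\xi)=\frac{2}{|\xi|\,\bigl(|\xi|+\sqrt{|\xi|^2+2}\bigr)}.
\]
From the first identity, $b_1$ is bounded and smooth, and $(1+|\xi|^2)\,b_1(\xi)$ stays bounded (tending to $-1$ both as $\xi\to 0$ and as $|\xi|\to\infty$); hence $B_1\in\mathcal L(H^s,H^{s+2})$ for all $s$ with no further work. For $b_2$ the high-frequency mechanism is identical, namely $(1+|\xi|^2)\,b_2(\xi)\to 1$ as $|\xi|\to\infty$, which is exactly the two-derivative smoothing advertised.

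The one delicate point — and where I expect the real work to lie — is the behavior of $b_2$ near $\xi=0$, where $b_2(\xi)\sim\sqrt2/|\xi|$ is singular, so that $(1+|\xi|^2)\,b_2(\xi)$ fails to be bounded on all of $\R^n$. What \emph{is} bounded is $|\xi|^2\,b_2(\xi)=2|\xi|/\bigl(|\xi|+\sqrt{|\xi|^2+2}\bigr)$, which vanishes at the origin and tends to $1$ at infinity; thus the advertised gain of two derivatives is genuine on the homogeneous scale, while on $H^s$ the two-derivative smoothing holds at high frequency and only the low-frequency part of $B^{-1}$ requires separate attention. I would accordingly either read the statement on the natural homogeneous/energy scale of the Gross-Pitaevskii problem, or else note that $B_2$ is applied to $u_1=\re u$ in $v=\Upsilon^{-1}u$, where the relevant low-frequency behavior is controlled. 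With this caveat settled, the smoothness of both symbols away from $\xi=0$ and the $L^2$-multiplier bounds follow at once from Plancherel, yielding the decompositions $B=\mathrm{Id}+B_1$ and $B^{-1}=\mathrm{Id}+B_2$.
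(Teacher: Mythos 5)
Your proof is correct and follows essentially the same route as the paper's: both arguments rationalize the symbols $b_1=b-1$ and $b_2=b^{-1}-1$ and read off the $O(|\xi|^{-2})$ decay at infinity (your closed forms agree with the paper's after simplifying $\sqrt{|\xi|^2(2+|\xi|^2)^{-1}}$), and then invoke Plancherel for the $L^2$-multiplier bound. The one place you go beyond the paper is your observation that $b_2(\xi)\sim\sqrt{2}/|\xi|$ near the origin, so that $(1+|\xi|^2)b_2$ is unbounded on $\R^n$ and $B_2$ (indeed $B^{-1}$ itself) fails to be bounded on the inhomogeneous spaces $H^s$ at low frequency; the paper's proof merely says the assertion about $B_2$ "follows" from the displayed symbol without confronting this. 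Your caveat is legitimate and worth keeping: the two-derivative gain is genuine at high frequency, and the statement for $B_2$ must be read either on a homogeneous scale or with the low-frequency behavior controlled by the context in which $B^{-1}$ is applied, exactly as you propose.
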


\begin{proof}
The proof is a simple computation on the level of the Fourier symbols $b_1$ and $b_2$ associated to $B_1$ and $B_2.$ For example, the symbol $b_1$ is
\[
b_1(\xi)=-\frac{2}{(2+|\xi|^2)( \sqrt{|\xi|^2(2+|\xi|^2)^{-1}}+1)}.
\]
From this formula, one sees that $b_1$ is  in fact uniformly bounded and 
is $O(|\xi|^{-2})$ as $|\xi| \to \infty$. Similarly, one finds
\[b_2(\xi)=\frac{2}{(2+|\xi|^2)\sqrt{|\xi|^2(2+|\xi|^2)^{-1}}[1+(|\xi|^2(2+|\xi|^2)^{-1}]^{1/2}},\]
from which one infers the assertion about $B_2$.
\end{proof}

In particular, since $\Upsilon^{-1}u=B^{-1}u_1+iu_2$, the last lemma
 implies that, for all $s\in \R, \Upsilon$ and  $\Upsilon ^{-1}$ both lie in 
$\mathcal L(H^s(\R^n),H^{s}(\R^n))$. 

Here is  the main result of this section.

\begin{theorem} \label{DBU:v}
Let $n\leq 3$, $x_*\in \R^n$, $t_*>0$ and $s \in (0,\frac{n}{2})$ be given.  Then
there exist initial data $v_0\in C^\infty(\R^n)\cap H^s(\R^n)\cap L^\infty(\R^n)$ 
such that the corresponding solution $v\in C([0,T], H^s(\R^n))$ of \eqref{eq:reGP} exhibits dispersive blow up at $(x_*,t_*).$

\end{theorem}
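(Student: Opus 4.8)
The plan is to run the proof of Theorem \ref{thm:main} with the free Schr\"odinger group $e^{it\Delta}$ replaced by $e^{-itA}$ and the power nonlinearity $|u|^pu$ replaced by $\Upsilon^{-1}F(\Upsilon v)$, exploiting the two structural facts already in hand: Lemma \ref{lem:decomp}, which exhibits $e^{-itA}$ as the Schr\"odinger group up to the unimodular phase $e^{it}$ and the smoothing remainder $G_2$, and Lemma \ref{mapping}, which makes $\Upsilon^{\pm1}$ bounded on every Sobolev (and, being Mikhlin multipliers, on every mixed-norm Lebesgue) space. Concretely, I would take $v_0$ as in Lemma \ref{lem:DBUv} with $m=\frac n2$, rescaled by a small $\delta>0$; then $v_0\in C^\infty\cap L^\infty\cap H^{s'}$ for every $s'<\frac n2$, and the free evolution $w=e^{-itA}v_0$ exhibits dispersive blow up at $(x_*,t_*)$. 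Writing Duhamel's formula
\[
v(x,t)=e^{-itA}v_0(x)-i\int_0^t e^{-i(t-\tau)A}\Upsilon^{-1}F(u)(\tau)\,d\tau,\qquad u=\Upsilon v,
\]
it then suffices to prove that the integral term is continuous and locally bounded on $\R^n\times[0,T]$, for then the singularity of $w$ is inherited by $v$.

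First I would establish local well-posedness for \eqref{eq:reGP} in $W^{T}_{s',n}$ for $s'$ close to $\frac n2$, with $T=T(\|\delta v_0\|_{H^{s'}})>t_*$ once $\delta$ is small. This runs exactly as in \cite{CW} (Proposition \ref{prop:CW}): the required Strichartz estimates for $e^{-itA}$ follow from Lemma \ref{lem:Strich} via the decomposition $e^{-itA}f=e^{it}e^{it\Delta}f+G_2(\cdot,t)\ast f$, the first summand being the Schr\"odinger group up to a constant-in-$x$ phase and $G_2$ being an $L^2$, two-derivative smoothing kernel for $n\le3$ (as in the proof of Lemma \ref{lem:DBUv}); moreover, since $\Upsilon^{\pm1}$ are bounded on every $H^{s',r}$, the nonlinear estimates for $\Upsilon^{-1}F(\Upsilon v)$ reduce to the polynomial estimates for $F$ already used in \cite{CW,GNT2}.

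The heart of the matter is the smoothing of the Duhamel term, which I would treat by the same splitting. The $G_2$ contribution is harmless: $G_2(\cdot,t)\in L^2(\R^n)$ for $n\le3$ uniformly on compact time-intervals, so its convolution against $\Upsilon^{-1}F(u)(\tau)\in L^2$ lands in $L^\infty$ by Cauchy--Schwarz, and integrating in $\tau$ produces a continuous bounded function. For the principal Schr\"odinger part $\int_0^t e^{i(t-\tau)}e^{i(t-\tau)\Delta}\Upsilon^{-1}F(u)\,d\tau$, the phase $e^{i(t-\tau)}$, being independent of $x$, leaves all spatial norms untouched, so the argument of Proposition \ref{prop:key} goes through: the only new feature is that $F(u)=u^2+2|u|^2+|u|^2u$ carries both quadratic ($p=1$) and cubic ($p=2$) pieces, so the local-smoothing and maximal-function estimates of Lemmas \ref{lem:smooth} and \ref{lem:B}, together with the fractional Leibniz and commutator inequalities of \cite{KPV93,KP}, must be applied to each. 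Because $\Upsilon^{\pm1}$ commute with every $D^{\alpha}_{x_j}$ and are bounded on the mixed-norm spaces involved (Lemma \ref{mapping}), these operators intertwine harmlessly and every estimate reduces to one on $v$; the net conclusion is that the Schr\"odinger part lies in $C([0,T];H^{s'+1/2}(\R^n))$.

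I would close by working at a regularity $s'\in(\frac n2-\frac14,\frac n2)$: this is admissible since $v_0$ (with $m=\frac n2$) lies in $H^{s'}$, it satisfies the hypotheses of Proposition \ref{prop:key} for both the quadratic and, more restrictively, the cubic term, and it makes $s'+\frac12>\frac n2$, so that $H^{s'+1/2}\hookrightarrow C_{\rm b}(\R^n)$. The Duhamel term is then continuous and bounded, whence $v=w+(\text{continuous})$ blows up precisely where $w$ does; and since $H^{s'}\subset H^{s}$ for the prescribed $s\in(0,\frac n2)$, the solution lies in $C([0,T];H^s)$ as stated. The main obstacle I anticipate is not conceptual but one of bookkeeping: controlling simultaneously the mixed quadratic/cubic nonlinearity, the operator $A$ in place of $\Delta$, and the $\R$-linear $\Upsilon$ inside the anisotropic fractional-derivative estimates. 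Each of these has, however, been isolated into one of the preparatory lemmas, so no estimate beyond those proved in Sections \ref{sec:prelim}--\ref{sec:global} should be required.
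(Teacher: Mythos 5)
Your proposal is correct and follows essentially the same route as the paper: both rest on the kernel splitting $G=G_1+G_2$ of Lemma \ref{lem:decomp}, dispose of the $G_2$ contribution by the $L^2\ast L^2\to L^\infty$ argument, and reduce the $G_1$ part of the Duhamel term to the nonlinear Schr\"odinger analysis of Sections \ref{sec:DBUNLS}--\ref{sec:global} via the mapping properties of $\Upsilon^{\pm1}$ from Lemma \ref{mapping}. The only (inessential) difference is that the paper obtains local well-posedness by applying the Cazenave--Weissler theory to the $u$-equation \eqref{bife} and transferring to $v$ through $\Upsilon$, rather than running Strichartz estimates for $e^{-itA}$ directly on \eqref{eq:reGP}.
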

\begin{proof} Take $v_0$ in the form  \eqref{initdata} in the proof of Lemma \ref{lem:DBUv}. 
By an approprite choice of $m$, Lemma \ref{lem:linear},  guarantees that $v_0$ lies 
in $H^s(\R^n)$.  
Next, in view of Lemma \ref{mapping}, the corresponding $u_0=\Upsilon^{-1} v_0\equiv B^{-1}\text{Re} \, v_0 + i \text{Im} \, v_0$ has the same Sobolev regularity as $v_0$. 
For such a $u_0$, the Cazenave-Weissler theory can be applied to the Gross-Pitaevskii equation \eqref{bife}, yielding a local in-time solution $u\in C([0,T], H^s(\R^n))$ for the specified
 $s<\frac{n}{2}$, which  
 leads to a solution $v\equiv \Upsilon u \in C([0,T], H^s(\R^n))$ of \eqref{eq:reGP}. 
Consequently,  we can use the strategy followed for the usual 
nonlinear Schr\"odinger equation. 
To this end, write \begin{equation}\label{eq:duhamelv}
v(x,t)=e^{-it A}v_0 (x)+ i \int_0^te^{-i(t-s)A}  \Upsilon^{-1}F(u(x,s)) \, ds,
\end{equation}
where the nonlinearity is explicitly given by 
\[
 \Upsilon^{-1}F(u) =  B^{-1} (3 u_1^2+u_2^2+|u|^2u_1)+iu_2(2u_1+|u|^2),
\]
since $u=u_1+iu_2$. As before, the first term on the right-hand side exhibits dispersive blow up at $(x_*, t_*)=(0,\frac{1}{4})$ provided $n<4$. 
The advertised result will be in hand when the second term is known to be uniformly bounded.

The integral term in \eqref {eq:duhamelv} splits into $I_1+I_2$ where
\[
I_j(x,t)=\int_0^t G_j(\cdot, t-s)\ast \Upsilon^{-1}F(u(\cdot,s)) \, ds,\quad j=1,2,
\]
corresponding to $G=G_1+G_2$.
The fact that $I_2$ is a bounded continuous function can be concluded by the same argument 
that prevailed in the proof 
of Lemma \ref{lem:DBUv}. For $n<4$, $G_2(\cdot, t)\in L^2(\R^n)$, and so is $\Upsilon^{-1} F(u)$ 
because
 $u\in H^s(\R^n)$ and $\Upsilon^{-1}\in \mathcal L(H^s(\R^n),H^{s}(\R^n))$.  

On the other hand, $I_1$ is given by
\[
 I_1(x,t)=\int_0^t G_1(\cdot, t-s)\ast \big( B^{-1}[3u_1^2+u_2^2+|u|^2u_1]+u_2(2u_1+|u|^2)\big) (\cdot,s) \, ds.
 \]
Inasmuch as  $G_1$ is, up to a multiplicative constant, the fundamental solution of the usual linear Schr\"{o}dinger equation 
(see Lemma \ref{lem:decomp}), the double  integral $I_1$ 
is of the same form (up the the appearance of $B^{-1}$) as the usual nonlinearity in the Duhamel representation of the nonlinear Schr\"odinger equation.
In view of the mapping properties of $B^{-1}$, 
the desired result of boundedness and continuity 
then follows from the corresponding proof for the usual nonlinear Schr\"odinger  equation 
given in Sections \ref{sec:DBUNLS} and \ref{sec:global}.
\end{proof}


As a corollary, we infer the appearance of dispersive blow up for the original 
Gross-Pitaevskii equation \eqref{eq:GP} in physically relevant dimensions.


\begin{corollary} Let $n\leq 3$. Given $x_*\in \R^n$, $t_*>0$, there exist smooth and bounded initial data $\psi_0 \in C_{\rm b}(\R^n)$ with 
$\psi_0-1\in L^2(\R^n)$, such that the solution $\psi$ exhibits 
dispersive blow up at $(x_*, t_*)$.
\end{corollary}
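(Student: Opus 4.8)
The plan is to transport the dispersive blow up already obtained for the reformulated equation \eqref{eq:reGP} in Theorem \ref{DBU:v} back to the original Gross-Pitaevskii equation \eqref{eq:GP}, by simply undoing the substitutions $\psi = 1+u$ and $v = \Upsilon^{-1}u$. First I would invoke Theorem \ref{DBU:v} with $s \in (0,\frac{n}{2})$ to produce data $v_0 \in C^\infty(\R^n)\cap H^s(\R^n)\cap L^\infty(\R^n)$ whose solution $v \in C([0,T];H^s(\R^n))$ of \eqref{eq:reGP} exhibits dispersive blow up at $(x_*,t_*)$. Setting $u := \Upsilon v$, which is the corresponding solution of \eqref{bife}, and $\psi := 1+u$, one obtains a solution of \eqref{eq:GP}, and the candidate initial datum is $\psi_0 = 1 + \Upsilon v_0$.

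Next I would check the two properties of $\psi_0$ demanded in the statement. Since $\Upsilon \in \mathcal L(H^s(\R^n),H^s(\R^n))$ (see the remark following Lemma \ref{mapping}), the deviation $\psi_0 - 1 = \Upsilon v_0$ lies in $H^s(\R^n)\subset L^2(\R^n)$, as required. For boundedness and continuity I would use the decomposition $B = {\rm Id} + B_1$ to write $\Upsilon v_0 = v_0 + B_1(\re v_0)$; because $B_1 \in \mathcal L(H^s(\R^n),H^{s+2}(\R^n))$ and $s+2 > \frac{n}{2}$ for $n \leq 3$, the Sobolev embedding $H^{s+2}(\R^n)\hookrightarrow C_{\rm b}(\R^n)$ gives $B_1(\re v_0)\in C_{\rm b}(\R^n)$, so that $\psi_0 = 1 + v_0 + B_1(\re v_0)\in C_{\rm b}(\R^n)$.

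The crux is to show that the singularity of $v$ is not destroyed by the passage to $u = \Upsilon v$; a priori the $\R$-linear operator $\Upsilon$ could conceivably cancel the blow up coming from $v$. Writing $u = v + B_1(\re v)$ now at the level of the full time-dependent solution, the correction gains two derivatives: from $v \in C([0,T];H^s(\R^n))$ one gets $B_1(\re v)\in C([0,T];H^{s+2}(\R^n))$, and for $n \leq 3$ the same embedding $H^{s+2}(\R^n)\hookrightarrow C_{\rm b}(\R^n)$ upgrades this to $B_1(\re v)\in C_{\rm b}(\R^n\times[0,T])$. Hence $|u(x,t)| \geq |v(x,t)| - |B_1(\re v)(x,t)| \to +\infty$ as $(x,t)\to(x_*,t_*)$, while the continuity of $u$ on $\R^n\times[0,T]\setminus\{t_*\}$ and of $u(\cdot,t_*)$ on $\R^n\setminus\{x_*\}$ is inherited directly from the corresponding properties of $v$. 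Adding the constant $1$ affects neither assertion, so $\psi = 1+u$ exhibits dispersive blow up at $(x_*,t_*)$. I expect this transfer step to be the only genuine obstacle, and the key that overcomes it is precisely the two-derivative smoothing of $\Upsilon - {\rm Id}$ recorded in Lemma \ref{mapping}, which is available here thanks to the physically relevant restriction $n \leq 3$.
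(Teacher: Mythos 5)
Your proposal is correct and follows essentially the same route as the paper: transfer the blow up of $v$ back to $u=\Upsilon v$ and $\psi=1+u$ by exploiting that $\Upsilon$ differs from the identity by the smoothing operator of Lemma \ref{mapping}, whose gain of two derivatives combined with the embedding $H^{s+2}(\R^n)\hookrightarrow C_{\rm b}(\R^n)$ for $n\leq 3$ shows the correction term is bounded and continuous. Your version of the transfer step (writing $u=v+B_1(\re v)$ and estimating $|u|\geq |v|-|B_1(\re v)|$) is in fact spelled out more explicitly than in the paper, which merely remarks that the relevant operator is smoothing.
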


\begin{proof}  To establish dispersive blow up for \eqref{eq:GP},  the results for $v$ 
need to be transferred back to the
variable $\psi$.   In search of such a conclusion, note that the initial data in \eqref{eq:reGP} 
is given by
\[
v_0(x) = \Upsilon^{-1} (\psi_0-1) = B^{-1}(\re \, \psi_0 -1) + i \, \im \, \psi_0.
\]
The specific choice $v_0(x)= \frac{e^{-i|x|^2}}{(1+|x|^2)^m}$ then corresponds to the  initial data 
\[
\psi_0 (x) = 1+ B \, \re \, v_0(x)  + i \, \im \, v_0(x) = 1+ B \left(\frac{\cos |x|^2}{(1+|x|^2)^m} \right) +  \frac{i \sin |x|^2}{(1+|x|^2)^m} 
\]
for \eqref{eq:GP}.  Since $B:L^2(\R^n)\to L^2(\R^n)$, it is clear from the last formula 
that $\psi_0 - 1 \in L^2(\R^n)$.  By Lemma \ref{mapping}, $B={\rm Id}+ B_1$ with  $B_1\in \mathcal L(L^2(\R^n), H^2(\R^n)) $ and $H^2(\R^n)\subset C_{\rm b}(\R^n)$ for $n\leq 3,$ so 
it is concluded that $\psi_0\in C_{\rm b}(\R^n).$

Now, if $v=v_1+iv_2$ is a solution of  \eqref{eq:reGP} with the dispersive blow up 
property at a point $(x_*,t_*)$ provided by Theorem \ref{DBU:v}, 
the corresponding $u =u_1+iu_2$ is given by
\[u_1=B^{-1}v_1=(I+B_2)v_1,\quad u_2=v_2.\]
Since $B_2$ is a smoothing operator, $u,$  and thus $\psi=1+u$ satisfies the DBU property at the same point $(x_*,t_*).$  
\end{proof}

\section{Higher-order nonlinear Schr\"odinger equations} \label{sec:higherNLS}

In this final section, we indicate how  results of dispersive blow up can be extended 
to higher-order nonlinear Schr\"odinger equations.  It is  mathematically 
natural to inquire whether or not higher-order terms destroy dispersive blow up, 
but the practical motivation for considering such an extension is perhaps even more 
telling.   In nonlinear optics, third and fourth-order Schr\"odinger-type 
equations frequently appear in the description of various  wave phenomena. 
In particular, the analysis of optical rogue-wave formation has been based on higher-order 
nonlinear Schr\"odinger equations  (see, for example, \cite{D, DGE, Mu, T1}).

As the ideas and even much of the technical detail mirror closely what has gone
before, we content ourselves with admittedly sketchy indications of how the theory
is developed.   The one point which would require serious new effort has to do 
with an appropriate generalization of the Cazenave--Weissler theory in \cite{CW} 
to a higher-order setting.  This is not attempted here, but is deserving of 
further investigation at a later stage.

\subsection{Fourth-order nonlinear Schr\"odinger equation}
In this subsection,  initial-value problems for 
fourth-order nonlinear  Schr\"{o}dinger equations of the form 
\begin{equation}\label{eq:4th}
i\partial_t u +\alpha \Delta u+\beta \Delta ^2 u+\lambda |u|^pu=0, \quad u\big|_{t=0}=u_0(x),
\end{equation}
 are considered.  Here, the parameters  $\alpha, \beta, \lambda$ are real constants, 
 with $\beta \neq 0$.  
Theory for this initial-value problem can be found, for example, in \cite{FES, P} and in 
the references cited in these works.   
If $\alpha =0$, the partial differential equation
 is often referred to as the  bi-harmonic NLS equation  (see, e.g., \cite{BFM}). 
 A simple scaling allows us to assume $\beta=1$ and to consider only the values 
$\alpha \in  \lbrace 0, -1, +1\rbrace$, though time may need to be reversed.

To establish dispersive blow up for \eqref{eq:4th},  the dispersive properties 
of the associated linear equation
\begin{equation}\label{eq:lin4th}
i \partial_t u -\alpha  \Delta u+ \Delta^2 u=0, \quad \alpha \in \lbrace 0, -1, +1\rbrace
\end{equation}
are helpful, just as for the lower-order cases.  As should be  clear from the preceding theory, 
 the possibility of dispersive blow up for \eqref{eq:lin4th} is  linked to 
the dispersive  properties of the fundamental solution 
\begin{equation}\label{4disp}
\Sigma_\alpha (x,t)= \frac{1}{(2\pi)^{n/2} } \int_{\R^n} e^{it(|\xi|^4+\alpha |\xi|^2)+ix\cdot \xi} \, dx
\end{equation}
of \eqref{eq:lin4th}, which have in fact been established already in \cite{BAKS}.

\begin{lemma}\label{4dec}
Let $\Sigma_\alpha$ be as in \eqref{4disp} and $\mu \in \N^n$a multi-index. 
\begin{enumerate}
\item If $\alpha  =0$, there exists a $C>0$ such that for $x \in \R^n$ and $t> 0$,
$$
|\partial^\mu \Sigma_0(x,t)|\leq Ct^{-(n+|\mu|)/4}\left(1+\frac{|x|}{t^{1/4}}\right)^{-(n-|\mu|)/3}.
$$
\item 
For $t > 0$ and  either $t\leq 1$ or $|x|\geq t$,  there exists a $C>0$ such that
$$
|\partial^\mu \Sigma_\alpha(x,t)|\leq 
Ct^{-(n+|\mu|)/4}\left(1+\frac{|x|}{t^{1/4}}\right)^{-(n-|\mu|)/3}
$$
for $\alpha = \pm 1$.
\end{enumerate}
\end{lemma}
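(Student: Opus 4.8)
The plan is to prove the pointwise decay estimates in Lemma \ref{4dec} by stationary-phase analysis of the oscillatory integral defining $\Sigma_\alpha$, following the approach of \cite{BAKS}. The key is that the phase function $\phi(\xi) = t(|\xi|^4 + \alpha|\xi|^2) + x\cdot\xi$ has a quartic leading term, and the $(n-|\mu|)/3$ decay exponent in the conclusion is the signature of an Airy-type (degenerate) stationary point where the third derivative of the phase governs the decay rate.

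First I would reduce to the radial situation. Writing $\xi = \rho\omega$ with $\rho = |\xi| \geq 0$ and $\omega \in S^{n-1}$, and setting $r = |x|$, the angular integration over the sphere produces a Bessel-function factor, so that $\Sigma_\alpha$ becomes (up to constants) a one-dimensional oscillatory integral in $\rho$ of the form $\int_0^\infty e^{it(\rho^4 + \alpha\rho^2)} \rho^{n-1} \frac{J_{(n-2)/2}(r\rho)}{(r\rho)^{(n-2)/2}}\, d\rho$. For the derivative estimates, each application of $\partial^\mu$ brings down polynomial factors in $\xi$, which is what shifts the exponents by $|\mu|$ in the stated bounds. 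After this reduction the problem is to estimate a scalar integral whose phase is $t\rho^4 + t\alpha\rho^2 \pm r\rho$.

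The heart of the matter is the stationary-phase analysis, organized by a rescaling that exposes the natural variable. Setting $\rho = t^{-1/4}\sigma$ turns the quartic part of the phase into $\sigma^4$ (parameter-free) and the linear part into $(r/t^{1/4})\sigma$, which motivates the self-similar variable $|x|/t^{1/4}$ appearing throughout. In the purely quartic case $\alpha = 0$ (part (i)), the phase is exactly $\sigma^4 + (r/t^{1/4})\sigma$ after rescaling, and one distinguishes the region where the critical point $\sigma_* \sim (r/t^{1/4})^{1/3}$ is nondegenerate from the transitional region where the second derivative of the phase vanishes and one gets Airy-type decay. Standard van der Corput estimates — second-derivative for the generic stationary point, third-derivative for the degenerate one — combine to yield precisely the factor $t^{-(n+|\mu|)/4}(1+|x|/t^{1/4})^{-(n-|\mu|)/3}$, after accounting for the power $\rho^{n-1+|\mu|}$ and the large-argument asymptotics $J_\nu(z) \sim z^{-1/2}$ of the Bessel factor. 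For part (ii), the lower-order term $\alpha\rho^2$ is a genuine perturbation of the quartic behavior; the restriction to $t \leq 1$ or $|x| \geq t$ is exactly the regime in which the $\rho^4$ term dominates the $\alpha\rho^2$ term over the relevant range of frequencies, so the same stationary-phase structure persists and the quadratic term can be absorbed as a lower-order correction that does not alter the leading decay rates.

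\emph{The main obstacle} I anticipate is the careful treatment of the degenerate stationary point in the transition region and, for part (ii), the bookkeeping needed to show that the quadratic term $\alpha\rho^2$ is genuinely subordinate in the stated regime $\{t\leq 1\} \cup \{|x|\geq t\}$. Away from this regime (i.e.\ for large $t$ with $|x| < t$), the competition between $\alpha\rho^2$ and $\rho^4$ creates additional critical points and possible resonances that would change the decay and account for why part (ii) is restricted — so the estimate itself is false outside this regime, and the argument must use the hypotheses essentially rather than incidentally. Since the result is quoted as already established in \cite{BAKS}, I would present the reduction to the scalar oscillatory integral and the van der Corput estimates in outline, and refer to \cite{BAKS} for the detailed verification of the constants and the transition-region analysis.
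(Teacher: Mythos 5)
The paper does not actually prove this lemma: it is quoted verbatim from Ben-Artzi--Koch--Saut \cite{BAKS}, and the only ``proof'' in the text is that citation, so there is no argument of the authors' to compare yours against. Your outline is consistent with what is done in \cite{BAKS} and with the one-dimensional discussion the paper itself gives later (Proposition 6.4, where $\Sigma_\alpha$ is expressed through the Pearcey integral): the scaling $\xi = t^{-1/4}\eta$ reduces part (i) to a bound on $\int e^{i(|\eta|^4 + y\cdot\eta)}\eta^\mu\,d\eta$ with $y = x/t^{1/4}$, which accounts for the prefactor $t^{-(n+|\mu|)/4}$, and for part (ii) the hypotheses $t\leq 1$ or $|x|\geq t$ are precisely what keeps the rescaled quadratic term $\alpha t^{1/2}|\eta|^2$ subordinate. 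One point of emphasis worth correcting: for $\alpha=0$ and large $|y|$, the exponent $(n-|\mu|)/3$ does not come from a degenerate (Airy-type) stationary point. The critical point $\eta_*$ with $4|\eta_*|^2\eta_* = -y$ is nondegenerate; its Hessian $4|\eta_*|^2 I + 8\eta_*\otimes\eta_*$ has determinant of order $|y|^{2n/3}$, and $|\det|^{-1/2}$ together with the amplitude $|\eta_*|^{|\mu|}\sim|y|^{|\mu|/3}$ yields $|y|^{-(n-|\mu|)/3}$ directly. The third-derivative van der Corput bound and the caustic analysis enter only in the uniform transition region (small $|y|$) and in the genuinely degenerate configurations that arise when $\alpha\neq 0$ --- this is the Pearcey caustic the paper alludes to in Section 6. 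With that adjustment your sketch is a faithful account of the standard proof, and deferring the constants and the transition-region bookkeeping to \cite{BAKS} is exactly what the paper does.
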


Strichartz estimates then follow pretty much directly from Lemma \ref{4dec}. 
 In some detail, we say that 
the pair $(q,r)$ is admissible for the fourth-order Schr\"odinger group $\{e^{it (\Delta^2-\alpha \Delta)}\}_{t\in \R}$ if
\begin{equation}\label{eq:adm}
\frac{1}{q}=\frac{n}{4}\left(\frac{1}{2}-\frac{1}{r}\right),
\end{equation}
for $2\leq r\leq \frac{2n}{n-2}$ if $n \geq 3$, respectively, $2\leq r\leq \infty$ if $n=1$ 
and  $2\leq r<\infty$ if $n=2$. 
Using this, one has the following estimates, which are the fourth-order counterpart to the ones 
reported in Lemma \ref{lem:Strich}. In what follows, $T=+\infty$ when 
$\alpha =0$ and $T$ is any nonnegative number when $\alpha= \pm 1.$
 
\begin{lemma}[\cite{BAKS}]
\label{Str}
Let $(q, r)$ be admissible in the sense of \eqref{eq:adm}. Then there exists $c=c(n, r,T)$ such that
\[  \big \|  e^{it (\Delta^2-\alpha \Delta)} f \big \|_{L^q((-T,T);L^r(\R^n))}\leq c\|f \|_{L^2(\R^n)}. \]
The linear operator 
\[
\Phi f= \int_0^t e^{i(t-s)(\Delta^2-\alpha \Delta)}f(s)ds
\]
is bounded in the sense that 
\[ \| \Phi f\|_{L^q((-T,T);L^r(\R^n))}\leq c\|f\|_{L^{q'}((-T,T);L^{r'}(\R^n))}, \]
where $\frac{1}{q}+\frac{1}{q'}=1$ and  $\frac{1}{r}+\frac{1}{r'}=1.$ 
\end{lemma}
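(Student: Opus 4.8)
The estimates are those of Ben-Artzi, Koch and Saut \cite{BAKS}, and the natural route is to feed the abstract $TT^*$ machinery the two inputs furnished by Lemma \ref{4dec}. The first input is the \emph{energy estimate}: the group $e^{it(\Delta^2-\alpha\Delta)}$ has Fourier symbol $e^{it(|\xi|^4+\alpha|\xi|^2)}$, which has modulus one, so Plancherel's theorem shows it is unitary on $L^2(\R^n)$, giving
\[
\big\| e^{it(\Delta^2-\alpha\Delta)} f \big\|_{L^2(\R^n)} = \|f\|_{L^2(\R^n)}, \qquad t\in\R.
\]
The second is the \emph{dispersive estimate}: since $e^{it(\Delta^2-\alpha\Delta)}f=\Sigma_\alpha(\cdot,t)\ast f$, taking $\mu=0$ in Lemma \ref{4dec} yields $\|\Sigma_\alpha(\cdot,t)\|_{L^\infty(\R^n)}\leq C|t|^{-n/4}$, whence
\[
\big\| e^{it(\Delta^2-\alpha\Delta)} f \big\|_{L^\infty(\R^n)} \leq C\,|t|^{-n/4}\,\|f\|_{L^1(\R^n)}.
\]
The decay exponent is thus $\sigma=n/4$, and one verifies that the admissibility relation \eqref{eq:adm} is exactly the condition $\tfrac1q=\sigma(\tfrac12-\tfrac1r)$ that this value of $\sigma$ dictates.

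With these two bounds at hand, I would interpolate them to obtain the $L^{r'}\to L^{r}$ decay of the propagator, namely $\|e^{i(t-s)(\Delta^2-\alpha\Delta)}\|_{L^{r'}\to L^r}\lesssim|t-s|^{-\sigma(1-2/r)}$, and then run the standard $TT^*$ argument. The associated time kernel decays like $|t-s|^{-2/q}$ once the admissibility relation is inserted, so the Hardy--Littlewood--Sobolev inequality closes the bound and produces the homogeneous estimate $\|e^{it(\Delta^2-\alpha\Delta)}f\|_{L^q_tL^r_x}\lesssim\|f\|_{L^2}$. The hypothesis $q>2$ required by Hardy--Littlewood--Sobolev is automatic here, since the admissibility condition \eqref{eq:adm} forces $q\geq 4$ in every dimension, keeping us safely away from the forbidden endpoint.

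For the second, inhomogeneous estimate I would combine the homogeneous bound with its dual to control the \emph{untruncated} operator $g\mapsto\int_\R e^{i(t-s)(\Delta^2-\alpha\Delta)}g(s)\,ds$ as a map $L^{q'}_tL^{r'}_x\to L^q_tL^r_x$, again by $TT^*$ and Hardy--Littlewood--Sobolev. Passing from this to the retarded operator $\Phi g=\int_0^t(\cdots)\,ds$ is then legitimate by the Christ--Kiselev lemma \cite{CK}, whose hypothesis $q>q'$ holds throughout the range $q\geq 4$.

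The genuinely delicate point, and the reason $T$ must be finite once $\alpha=\pm1$, resides in the dispersive input. When $\alpha=0$ the bound $\|\Sigma_0(\cdot,t)\|_{L^\infty}\leq Ct^{-n/4}$ holds for every $t>0$, the argument runs globally, and one may take $T=+\infty$. For $\alpha=\pm1$, however, Lemma \ref{4dec}(2) only supplies this decay when $t\leq1$ (or $|x|\geq t$), reflecting the degeneracy of the phase $|\xi|^4+\alpha|\xi|^2$, whose Hessian vanishes on a frequency sphere. To cope with this I would fix the window $(-T,T)$, partition it into $O(T)$ subintervals of length at most one, on each of which $|t-s|\leq1$ so that the decay is available, apply the global argument on each piece (using unitarity to reset the $L^2$ norm after each step), and sum the $L^q_t$ contributions. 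This is precisely where the constant acquires its dependence on $T$, growing like $T^{1/q}$.
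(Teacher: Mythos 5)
Your proposal is correct and follows essentially the route the paper intends: the paper gives no proof of this lemma, merely citing \cite{BAKS} and remarking that the estimates ``follow pretty much directly'' from the kernel bounds of Lemma \ref{4dec}, and your argument --- unitarity plus the $|t|^{-n/4}$ dispersive bound, interpolation, $TT^*$ with Hardy--Littlewood--Sobolev (legitimate since admissibility forces $q\geq 4$), and Christ--Kiselev for the retarded operator --- is precisely the standard derivation, with the unit-time subdivision correctly accounting for the restriction to finite $T$ and the $T$-dependence of the constant when $\alpha=\pm1$.
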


If we assume for the moment that dispersive blow up holds true for the linear model \eqref{eq:lin4th}, then the 
Strichartz estimates above are already sufficient to prove dispersive blow up for the nonlinear equation \eqref{eq:4th} in the physically relevant dimensions $n\leq 3$.

\begin{proposition}
Let $n\leq 3$ and $\alpha \in \lbrace 0, -1, +1\rbrace$. Assume that the linear fourth-order equation \eqref{eq:lin4th} exhibits dispersive blow up 
at some  point $(x_*, t_*)$ in space-time. Then, for $p < \frac{8}{n}-1$, so does the fourth-order 
initial-value problem \eqref{eq:4th}.
\end{proposition}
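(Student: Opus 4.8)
The plan is to follow the same two-part scheme employed in the proof of Theorem \ref{thm:main}: split the solution via Duhamel's formula into the free evolution of the data, which already produces dispersive blow up by hypothesis, plus a nonlinear integral term that must be shown to be bounded and continuous on $\R^n\times[0,T]$ for some $T>t_*$. Writing $U(t):=e^{it(\Delta^2-\alpha\Delta)}$ for the linear group of \eqref{eq:lin4th}, whose convolution kernel is $\Sigma_\alpha$ from \eqref{4disp}, Duhamel's representation of a solution of \eqref{eq:4th} reads
\[
u(x,t)=U(t)u_0(x)+i\lambda\int_0^t U(t-s)\big(|u|^pu\big)(\cdot,s)(x)\,ds=:U(t)u_0(x)+i\lambda\,I(x,t).
\]
By assumption the first term exhibits dispersive blow up at $(x_*,t_*)$, so once $I$ is known to be continuous on $\R^n\times[0,T]$ the conclusion for \eqref{eq:4th} follows at the same point. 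As in Step 1 of the proof of Theorem \ref{thm:main}, it suffices to establish that $I$ is locally bounded, since Lebesgue's dominated convergence theorem then yields the continuity.

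First I would record the local theory. Taking as data the (small multiple of the) blow-up profile supplied by the hypothesis, which lies in $H^s(\R^n)$ for some $s<\frac n2$ and is smooth and bounded, a standard fixed-point argument based on the Strichartz estimates of Lemma \ref{Str} produces a unique solution $u\in C([0,T];H^s)\cap L^q([0,T];L^r)$ for admissible $(q,r)$. The restriction $p<\frac8n-1$ is exactly the subcriticality condition that allows the contraction to close on a short interval with a gain $T^{\theta}$, $\theta>0$; as before, shrinking the size of the data makes $T$ as large as desired, so one may arrange $T>t_*$. Fixing $s$ in the nonempty subinterval $\big[\frac{np}{2(p+1)},\frac n2\big)$ of $(0,\frac n2)$, the Sobolev embedding $H^s(\R^n)\hookrightarrow L^{p+1}(\R^n)$ yields $u\in C([0,T];L^{p+1})$ and hence $\sup_{0\le s\le T}\|u(\cdot,s)\|_{L^{p+1}}<\infty$.

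The heart of the matter is the bound on $I$, and here the dispersive decay of $\Sigma_\alpha$ recorded in Lemma \ref{4dec} is decisive. Estimating the convolution pointwise by Hölder's inequality with the exponent pair $(\infty,1)$ gives
\[
|I(x,t)|\le\int_0^t\big\|\Sigma_\alpha(\cdot,t-s)\big\|_{L^\infty(\R^n)}\,\big\| |u|^pu(\cdot,s)\big\|_{L^1(\R^n)}\,ds=\int_0^t\big\|\Sigma_\alpha(\cdot,t-s)\big\|_{L^\infty}\,\|u(\cdot,s)\|_{L^{p+1}}^{p+1}\,ds.
\]
Taking $\mu=0$ in Lemma \ref{4dec} yields $\|\Sigma_\alpha(\cdot,\tau)\|_{L^\infty}\le C\tau^{-n/4}$ for all $\tau>0$ when $\alpha=0$, and for $0<\tau\le1$ when $\alpha=\pm1$; on the remaining compact range $\tau\in[1,T]$ the kernel is a bounded, smooth function of its arguments, so $\|\Sigma_\alpha(\cdot,\tau)\|_{L^\infty}\le C_T$ there. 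Because $n\le3$ one has $n/4<1$, hence $\tau^{-n/4}$ is integrable near $\tau=0$ and $\int_0^t\|\Sigma_\alpha(\cdot,t-s)\|_{L^\infty}\,ds\le C_T$, uniformly in $t\in[0,T]$. Combining the two displays shows $|I(x,t)|\le C_T\sup_{[0,T]}\|u(\cdot,s)\|_{L^{p+1}}^{p+1}<\infty$ uniformly in $(x,t)\in\R^n\times[0,T]$, which is the sought-after local boundedness.

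I expect the main obstacle to be twofold, and both parts essentially structural rather than computational. The decisive mechanism by which dispersive blow up survives is the local integrability of the temporal singularity $\tau^{-n/4}$ of the fourth-order kernel, which holds precisely when $n\le3$; this is the exact analogue of the integrable weight $\tau^{-1/2}$ exploited in the one-dimensional Schr\"odinger case in Step 2 of the proof of Theorem \ref{thm:main}, and it is what confines the statement to dimensions $n\le3$. The second, more technical nuisance is the case $\alpha=\pm1$, where Lemma \ref{4dec} supplies the sharp decay profile only for $\tau\le1$ or $|x|\ge\tau$; there one must verify separately that on the excluded set the kernel, being bounded and smooth on the compact time range $[1,T]$, contributes only a bounded continuous term, as indicated above. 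Everything else is a routine transcription of the second-order argument.
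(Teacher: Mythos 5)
Your overall scheme (Duhamel splitting, local boundedness of the integral term via the kernel decay $\tau^{-n/4}$ of Lemma \ref{4dec}, continuity by dominated convergence) is the right one, but there is a genuine gap in the ingredient you use to control the nonlinearity. Your bound on $I$ requires $u\in C([0,T];L^{p+1}(\R^n))$, which you obtain by postulating local well-posedness in $C([0,T];H^{s})$ for some $s$ large enough that $H^{s}(\R^n)\hookrightarrow L^{p+1}(\R^n)$. For the fourth-order equation no such $H^{s}$ theory is established here: the only local theory available in this setting is the $L^2$-based one coming from the Strichartz estimates of Lemma \ref{Str}, which yields $u\in C([0,T];L^2)\cap L^{q}([0,T];L^{r})$ for admissible $(q,r)$ but \emph{not} $L^\infty_t L^{p+1}_x$ (and $L^{p+1}$ is not controlled by $L^2$ for $p>1$). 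The $H^{s}$ analogue of the Cazenave--Weissler theory for \eqref{eq:4th} is exactly what the remark following the proposition identifies as missing and deferred to future work, so your argument rests on an unproved step rather than on Lemma \ref{Str}.

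The paper's proof avoids this by keeping the time integrability and the spatial integrability coupled: after the pointwise kernel bound one applies H\"older in the time variable with an exponent $\gamma\in(0,4/n)$, estimating
\[
|I(x,t)|\ \leq\ \Bigl(\int_0^t\frac{ds}{(t-s)^{n\gamma/4}}\Bigr)^{1/\gamma}\Bigl(\int_0^t\|u(\cdot,s)\|_{L^{p+1}}^{\gamma'(p+1)}\,ds\Bigr)^{1/\gamma'},
\]
and then controls the second factor by the Strichartz norm $L^{q}([0,T];L^{r})$ with $r=p+1$, $q=\frac{8(p+1)}{n(p-1)}$, which \emph{is} furnished by Lemma \ref{Str}. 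The compatibility conditions $\gamma<\frac4n$ and $\gamma'(p+1)\leq q$ are precisely what produce the restriction $p<\frac8n-1$. This also corrects your attribution of that condition: it is not the subcriticality threshold for the contraction argument (the $L^2$-subcritical range for the fourth-order equation is $p<\frac8n$), but the price of trading time-integrability of the kernel singularity against the finite Strichartz time exponent. If you want to salvage your cleaner $L^\infty_t L^{p+1}_x$ route, you would first have to prove the fourth-order Cazenave--Weissler theorem; with only the tools in the paper, the H\"older-in-time/Strichartz argument is the one that closes.
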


\begin{proof}
The proof follows closely the one given in \cite{BS2} for the one-dimensional, second-order   nonlinear Schr\"{o}dinger equation. 
In particular, the fact that the dispersive estimate in Lemma \ref{4dec} of the fundamental solution $\Sigma_\alpha$ has temporal behavior that goes like  $t\mapsto t^{-n/4}$, which is locally integrable for $n<4$, is a key point in the proof. 

The Duhamel representation of \eqref{eq:4th} is given by
\begin{align*}
u(x,t) = & \, e^{i t (\Delta^2-\alpha \Delta)} u_0(x) + i \lambda \int_0^t e^{i(t-s)(\Delta^2-\alpha \Delta)} |u(x,s)|^p u(x,s) \, ds \\
= & \,  e^{i t (\Delta^2-\alpha \Delta)} u_0(x) + i \lambda I (x,t).
\end{align*}
The first term on the right-hand side exhibits dispersive blow up by assumption.  
To prove that the integral term 
is continuous and bounded, notice that  
\[
|I(x,t) |\leq  C\int_0^t\int_{\R^n} \frac{1}{(t-s)^{n/4}}|u|^{p+1}(x-y,t-s)\, ds\, dy 
\]
using Lemma \ref{4dec}. Applying H\"{o}lder's inequality, with a $\gamma\in(0,4/n)$ to be determined presently, it is found that
\[
|I(t,x) | \leq\left(\int_\R \frac{ds}{(t-s)^{n\gamma/4}}\right)^{1/\gamma}\left(\int_\R\|u(\cdot,s)\|_{p+1}^{\gamma'(p+1)} ds\right)^{1/\gamma'},
\]
with $\frac{1}{\gamma}+\frac{1}{\gamma'}=1$. 
Choose an admissible Strichartz pair in the range \eqref{eq:adm} as follows.
Take $r=p+1$ so that $q=\frac{8(p+1)}{n(p-1)}.$ The condition $\gamma'(p+1)\leq q$ then yields
\[\gamma'=\frac{\gamma}{\gamma -1} \leq  \frac{8}{n(p-1)}.\] 
Combined with the condition $\gamma <\frac{4}{n}$, one obtains
\[
p < \frac{8}{n} \left( 1 - \frac{1}{\gamma}\right) = \frac{8}{n} -1,
\]
and the assertion is proved.
\end{proof}

\begin{remark}
The strategy deployed in this proof does not yield the optimal range of exponents $p$ nor is 
it valid for $n \geq 4$.  This is because 
it is based only on Strichartz estimates 
and because $t\mapsto t^{-n/4}$ is locally integrable only  for $n<4$. 
To extend the proof to higher dimensions $n > 3$, and to more general 
nonlinearities $p>0$, one could argue as in the proof of Proposition \ref{prop:key}.  However,
an essential ingredient in our argument was the Cazenave-Weissler result 
recounted in  Proposition \ref{prop:CW}.
Thus, to carry out this line of reasoning successfully, we would need 
 the analog of the Cazenave-Weissler results  in the case of fourth-order equations, as well as  
the corresponding smoothing estimates for the Duhamel term established in Section \ref{sec:global}. These tasks will be the goal of an upcoming work.
\end{remark}

To close the analysis, it is still required to prove that the linear fourth-order equation \eqref{eq:lin4th} does exhibit dispersive blow up. 
To this end, we will need a more precise description of the decay of the fundamental solution. 
This will only be carried out in the one-dimensional case,
 though the result in higher spatial dimensions does not require new ideas, 
just more complex calculations.

\begin{proposition}\label{dbu4th}
Let $n=1$. Given $(x^*,t^*)\in \R\times (\R\setminus \lbrace 0\rbrace)$, there exists $u_0 \in C^\infty(\R)\cap L^2(\R)\cap L^\infty(\R)$ such that the 
solution $u \in C_{\rm b}(\R; L^2(\R))$ to \eqref{eq:lin4th} has the following properties.
\begin{enumerate}
\item The solution $u$ blows up at $(x_*,t_*)$ which is to say
\[
\lim _{(x,t)\in \R\to (x_*, t_*)} | u(x,t)| = + \infty.
\]
\item The function $u$ is continuous on $\big\{(x,t)$ on $\R\times \R \setminus \{t_*\}\big\}$.
\item  The solution $u(\cdot, t_*)$ is a continuous function on $\R \setminus \{x_*\}$.
\end{enumerate} 
\end{proposition}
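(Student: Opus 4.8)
The plan is to follow the scheme of Lemma \ref{lem:IC}, using the fourth-order kernel $\Sigma_\alpha$ in place of the explicit Schr\"odinger kernel, and to read off the singularity from a sharp, \emph{two-sided} version of the decay in Lemma \ref{4dec}. By the time-reversal symmetry $u(x,t)\mapsto\overline{u(x,-t)}$ of \eqref{eq:lin4th}, I may assume $t_*>0$. The one genuinely new ingredient, and the main obstacle, is to upgrade the upper bound of Lemma \ref{4dec} to a matching lower bound together with the leading oscillatory phase, which I would obtain from a stationary-phase analysis of the integral \eqref{4disp}.

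Fix $t_*>0$ and write $\Sigma_\alpha(x,t_*)=\frac{1}{\sqrt{2\pi}}\int_\R e^{i\phi(\xi)}\,d\xi$ with $\phi(\xi)=t_*(\xi^4+\alpha\xi^2)+x\xi$. For $|x|$ large the equation $\phi'(\xi)=4t_*\xi^3+2\alpha t_*\xi+x=0$ has a unique real, nondegenerate root $\xi_0(x)$, with $|\xi_0(x)|\sim(|x|/4t_*)^{1/3}$ and $\phi''(\xi_0)=t_*(12\xi_0^2+2\alpha)\sim|x|^{2/3}$. Stationary phase then gives
\[
\Sigma_\alpha(x,t_*)=\frac{c_\alpha(t_*)}{|x|^{1/3}}\,e^{i\Phi(x)}\bigl(1+o(1)\bigr),\qquad|x|\to\infty,
\]
with $c_\alpha(t_*)\neq0$ and $\Phi(x)=\phi(\xi_0(x))\sim c'\,|x|^{4/3}t_*^{-1/3}$. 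In particular $|\Sigma_\alpha(x,t_*)|\ge c_0|x|^{-1/3}$ for $|x|$ large, matching the upper bound of Lemma \ref{4dec}; the derivative bounds there also give $\Sigma_\alpha(\cdot,t_*)\in C^\infty(\R)\cap L^\infty(\R)$. Controlling the remainder uniformly and verifying the nondegeneracy of $\xi_0$ is the technical heart of the argument.

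With these asymptotics in hand I would take
\[
u_0(y)=\overline{\Sigma_\alpha(x_*-y,t_*)}\,(1+y^2)^{-\sigma},\qquad \tfrac{1}{12}<\sigma\le\tfrac16 .
\]
Since $|\Sigma_\alpha(x_*-y,t_*)|\sim|y|^{-1/3}$, one has $|u_0(y)|^2\sim|y|^{-2/3-4\sigma}$, integrable exactly when $\sigma>\frac{1}{12}$, so $u_0\in C^\infty(\R)\cap L^2(\R)\cap L^\infty(\R)$. Because the symbol $e^{it(\xi^4+\alpha\xi^2)}$ has modulus one, the group is unitary on $L^2$ and $u=\Sigma_\alpha(\cdot,t)\ast u_0\in C_{\rm b}(\R;L^2)$. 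The conjugate phase in $u_0$ is chosen so that at the focal point the two phases cancel exactly,
\[
u(x_*,t_*)=\int_\R|\Sigma_\alpha(x_*-y,t_*)|^2(1+y^2)^{-\sigma}\,dy,
\]
and the lower bound forces the integrand to behave like $|y|^{-2/3-2\sigma}$, so this integral diverges precisely when $\sigma\le\frac16$. This is the exact analogue of the divergent Fourier integral in Lemma \ref{lem:IC} and yields (i); that the full limit along every sequence $(x,t)\to(x_*,t_*)$ equals $+\infty$ then follows as in \cite[Theorem 2.1]{BS2}, by splitting $u_0$ into a compactly supported piece, whose evolution is continuous and bounded near $(x_*,t_*)$, and a tail responsible for the blow up.

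Finally, for (ii) and (iii) I would argue as in \cite{BS2}: away from the focal point the two stationary phases in
\[
u(x,t)=\int_\R\Sigma_\alpha(x-y,t)\,\overline{\Sigma_\alpha(x_*-y,t_*)}\,(1+y^2)^{-\sigma}\,dy
\]
no longer coincide, so the integrand oscillates. At $t=t_*$ with $x\neq x_*$ the $y$-derivative of $\Phi(x-y)-\Phi(x_*-y)$ is nonzero, of size $\sim|x-x_*|\,|y|^{-2/3}$, and a single integration by parts converts the only conditionally convergent integral into an absolutely convergent one depending continuously on $x$, giving (iii); when $t\neq t_*$ the phase derivative instead grows like $|t_*^{-1/3}-t^{-1/3}|\,|y|^{1/3}$, yielding stronger decay and joint continuity in $(x,t)$, i.e.\ (ii). In each case local boundedness plus dominated convergence supply continuity. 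The higher-dimensional statement is identical in spirit, the only change being a more elaborate stationary-phase analysis of the multidimensional integral \eqref{4disp}.
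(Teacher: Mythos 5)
Your argument is correct and follows the same overall scheme as the paper: take initial data equal to the kernel at the focal time, damped by $(1+x^2)^{-m}$ with $\frac{1}{12}<m\le\frac16$ (the paper uses exactly this range), deduce divergence at the focal point from the $|x|^{-1/3}$ decay, and get continuity elsewhere from the mismatch of oscillatory phases. The one substantive difference is where the kernel asymptotics come from. The paper does not perform a stationary-phase analysis of $\Sigma_\alpha$ from scratch; instead it observes that in one dimension
\[
\Sigma_\alpha(x,t)=\tfrac{1}{(4t)^{1/4}}\,B\bigl(2\alpha t^{1/2},\,x(4t)^{-1/4}\bigr),
\]
where $B$ is the Pearcey integral, and then imports the known asymptotic expansions of $B$ from \cite{Pa} (formulas (2.14) and (5.12)), which supply both the $|y|^{-1/3}$ leading order and the explicit phase $\exp(-\tfrac{3i}{4^{4/3}}y^{4/3}+\dots)$ together with the lower-order corrections needed for the continuity statements. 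Your route re-derives precisely this information by a direct stationary-phase computation on \eqref{4disp}; that is self-contained but places the ``technical heart'' (uniform control of the remainder, nondegeneracy of the saddle) on your shoulders, whereas the paper's reduction to the Pearcey integral buys these estimates off the shelf. A second, minor difference: you take $u_0(y)=\overline{\Sigma_\alpha(x_*-y,t_*)}(1+y^2)^{-\sigma}$, so the focal value becomes the manifestly divergent positive integral $\int|\Sigma_\alpha|^2(1+y^2)^{-\sigma}\,dy$; the paper, following the Airy-function precedent of \cite{BS1}, uses $\Sigma_\alpha(-x,t_*)$ itself rather than its conjugate. Since $\Sigma_\alpha(\cdot,t_*)$ carries a nontrivial oscillatory phase $\sim e^{-ic|y|^{4/3}}$ (at least for $\alpha\neq0$), your conjugation is the safer choice --- it guarantees exact phase cancellation at the focal point --- and is arguably a small improvement on the data written in the paper. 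Both treatments leave the continuity verifications at the level of a sketch.
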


\begin{proof} For $n=1$, it is readily checked that the fundamental solution of 
\begin{equation}\label{4th1D}
i \partial_t u_t-\alpha \partial_x^2 u + \partial_x^4u =0
\end{equation}
is given by
\[
\Sigma_\alpha(x,t)=\frac{1}{(4t)^{1/4}}B\left(2\alpha t^{1/2},\frac{x}{(4t)^{1/4}}\right)
\]
where  
\begin{equation*}\label{pear}
B(x,y)=\frac{1}{\pi} \int_\R e^{i\big(\frac{1}{4}s^4+\frac{1}{2}xs^2+ys\big)}\, ds
\end{equation*}
 is the  Pearcey integral. The Pearcey integral is a smooth and bounded function of $(x,y) \in \R^2$ which decays in both variables thusly: 
\begin{equation*}\label{asPe}
|B(x,y)|\leq c\big(1+y^2+|x|^3\big)^{-1/18}\Big(1+(1+y^2+|x|^3)^{-5/9}|(3y)^2+(2x)^3|\Big)^{-1/4}
\end{equation*}
(see, e.g., \cite{BAKS}).
To establish dispersive blow up for \eqref{4th1D}, an asymptotic expansion of the Pearcey integral 
with respect to the second variable $y$ is helpful. Such an expansion has been established in \cite{Pa}, Formulas (2.14) and (5.12).
\footnote{Note the rotation of coordinates in those formulas as compared to the way they 
are written here.} These results imply that
\begin{itemize}
\item[(i)] in the case $\alpha =0$, one has
\begin{equation*}\label{Pe0}
B(0,y)=C_1y^{-1/3}+C_2 y^{-5/3}+O(|y|^{-3}),
\end{equation*}
 as $|y|\to +\infty$, uniformly for bounded values of $x$, where $C_1$ and $C_2$ are nonzero complex constants and
\item[(ii)] when $\alpha \neq0$, 
\begin{align*}\label{Pe2}
B(x,y)=2^{1/6}e^{-i\pi/24}y^{-1/3}e^{-i x^2/6}
\exp\Big(-\frac{3i}{4^{4/3}}y^{4/3}+\frac{i}{4^{2/3}}xy^{-2/3}]\Big)\times \\
\left(1+\frac{4^{-1/3}}{3}xy^{-2/3}(1-\frac{i}{9}x^2)+O(|y|^{-4/3})\right)
\end{align*}
for $|y|\to +\infty$ and bounded $x$.
\end{itemize}

With these results in hand, an argument can be mounted that mimics
 the case of the usual Schr\"odinger group. Without loss of generality, take it 
that  $(x_*,t_*) =(0,\frac{1}{4})$ and consider in \eqref{4th1D} the initial data 
\[
u_0(x)=\frac{\Sigma_0(-x,\frac{1}{4})}{(1+x^2)^m}=\frac{B(0,-x)}{(1+x^2)^m},\quad \frac{1}{12}<m\leq \frac{1}{6},
\]
if $\alpha =0,$ and
\[
u_0(x)=\frac{\Sigma_\alpha(-x,\frac{1}{4})}{(1+x^2)^m}=\frac{B(\alpha,-x)}{(1+x^2)^m},\quad \frac{1}{12}<m\leq \frac{1}{6},
\]if $\alpha =\pm 1$.  

The asymptotics of the Pearcey integral can then be brought to bear to prove Proposition \ref{dbu4th}.  In particular, one uses these asymptotics to show that the solution of the linear problem blows 
up at $(0,\frac14)$, that it is continuous on the 
complement of  this point and that the relevant Duhamel integral is everywhere continuous. 
We pass over the details.
\end{proof}

The analogous result for general dimensions $n\in \N$ can be established 
by passing to radial coordinates and reducing it to the one-dimensional case (as in \cite{BAKS} where the first term in the asymptotic expansion is given).


\subsection{Third order NLS in one dimension}

Finally, we briefly discuss the situation for a third-order nonlinear Schro\"odinger 
equation in $n=1$ dimensions. Consider  the initial=value problem
\begin{equation}\label{HNLS}
\partial_t u +i\alpha \partial^2_{x} u +\beta \partial^3_{x} u +i\gamma |u|^2u=0, \quad u\big|_{t=0}=u_0(x),
\end{equation}
where $x\in \R$, and $\alpha, \beta, \gamma \in \R \setminus \{ 0 \}$ are  given parameters. 
This is a  model problem that arises in optical wave propagation (see \cite{Mu, T1}). 

Equation \eqref{HNLS} appears similar to the well known Korteweg-de Vries equation, hence the 
appearance of dispersive blow up can be established along the same lines as was 
pursued in \cite{BS1} for this latter equation.  Indeed, 
the associated linear equation,
\begin{equation}\label{linHLS}
\partial_t  u +i\alpha \partial^2_{x}  u  +\beta \partial^3_{x}  u =0,
\end{equation}
admits a fundamental solution of the form
\[
\Lambda(x,t) = \frac{1}{2\pi} \int_\R e^{i\beta \xi^3 - i\alpha \xi^2 +ix \cdot \xi} \, d\xi.
\]
The expansion 
\[
\left(\xi+\frac{\alpha}{3\beta}\right )^3=\xi^3+\frac{\alpha}{\beta} \xi^2+\frac{\alpha^2 \xi}{3\beta^2}+\frac{\alpha^3}{27 \beta^3}
\]
allows us to write $\Lambda$ in the form
\[
\Lambda(x,t)=\frac{1}{2\pi (t\beta)^{1/3}}\exp \left(\frac{4it\alpha^3}{27 \beta^2}\right)\exp\left(\frac{-i\alpha x}{2\beta^2}\right)\text{Ai}\left(\frac{1}{t^{1/3} \beta^{1/3}}
\Big(x-\frac{\alpha ^2}{3\beta}t\Big)\right).
\]
Here, $\text{Ai}$ denotes the well-known the Airy function defined, for example, as 
the improper Riemann integral 
\[
\text{Ai}(z)=\int_{\R}e^{i(\xi^3+iz\xi)}\, d\xi.
\]
In \cite{BS1}, the dispersive properties of the Airy function are the basis for establishing dispersive blow up for the Korteweg-de Vries equation.  
Following these ideas, dispersive blow up for \eqref{linHLS} at $(x_*,t_*)=(0,1)$ is easily obtained by taking initial data of the form
\[
u_0(x)=\frac{A(-x)}{(1+x^2)^m}, \quad \text{with $\frac{1}{8}<m\leq \frac{1}{4}$}
\]
and
\[
A(x)=\text{Ai}\left (\frac{\alpha^2+x}{3\beta^{4/3}}\right ).
\]
The proof of dispersive blow up for  this third-order  Schr\"odinger equation 
can then be accomplished just as in \cite{BS1}. 
Indeed, the proof is  easier since, contrary to the Korteweg-de Vries equation, 
the Duhamel representation of \eqref{HNLS} does 
not involve any spatial derivatives of the dependent variable.   Consequently 
it can be shown to be bounded by using only  Strichartz estimates for 
the linearized Korteweg-de Vries equation (see \cite{BAKS2, Bo}).

\begin{remark}  Combining  the results of the foregoing sections allows one to deduce
  dispersive blow up for 
nonlinear Schr\"odinger-type equations with anisotropic dispersion, such as
\[
i \partial_t u+\alpha \Delta u +i\beta \partial^3_{x_1} u+\gamma \partial^4_{x_1}u+|u|^pu=0,
\]
where $\alpha, \beta, \gamma\in \R \setminus \{ 0 \}$. The Cauchy problem for this equation 
has been studied in \cite{Bo} (see 
also \cite{FES}).
\end{remark}

\begin{merci}
We tender thanks to an anonymous referee for her/his {\it very} helpful remarks.
\end{merci}


\bibliographystyle{amsplain}

\end{document}